\newcommand{\noun}[1]{\textsc{#1}}
\newcommand{\mathcircumflex}[0]{\mbox{\^{}}}
\numberwithin{equation}{section}
\numberwithin{figure}{section}
\theoremstyle{plain}
\newtheorem{thm}{\protect\theoremname}
\theoremstyle{plain}
\newtheorem{conjecture}[thm]{\protect\conjecturename}
\theoremstyle{plain}
\newtheorem{prop}[thm]{\protect\propositionname}
 \let\mathscr\relax
\newcommand{\powerset}{\raisebox{.15\baselineskip}{\Large\ensuremath{\wp}}}
\providecommand{\conjecturename}{Conjecture}
\providecommand{\propositionname}{Proposition}
\providecommand{\theoremname}{Theorem}
\begin{document}

\title{Sketch for a Theory of Constructs}

\author{Edinah K. Gnang, Jeanine Gnang}
\begin{abstract}
The present note sketches a theory of constructs.
\end{abstract}

\maketitle

\section{What are constructs ?}

The many variations around the general theme of matrix multiplication
\cite{MB94,GKZ,RK,GER,MB90,Lim2013,Zwick:2002:PSP:567112.567114,crane2015,BUTKOVIC2003313,5387448,Iverson:1962:PL:1098666}
press for a unified framework. A partial unification is achieved by
adopting an insight from category theory. Namely, the observation
that composition monoids broaden the scope of the multiplication operation.
The basic idea of the proposed theory of \emph{constructs} is to assign
to matrix and hypermatrix entries, morphisms of a semi-category (for
which the associativity requirement as well as properties of the identity
element are loosened). We call the resulting mathematical objects
\emph{constructs. }The algebra of \emph{constructs} is prescribed
by a \emph{combinator} noted Op, and a \emph{composer} noted $\mathcal{F}$.
The \emph{composer} specifies rules for composing entry morphisms
while the \emph{combinator} specifies rules for combining the compositions
of entry morphisms. Natural choices for a \emph{combinator} include
:
\[
\sum_{0\le\textcolor{red}{j}<\ell},\:\prod_{0\le\textcolor{red}{j}<\ell},\:\max_{0\le\textcolor{red}{j}<\ell},\:\min_{0\le\textcolor{red}{j}<\ell},\:\bigcup_{0\le\textcolor{red}{j}<\ell},\:\bigcap_{0\le\textcolor{red}{j}<\ell},\:\bigtimes_{0\le\textcolor{red}{j}<\ell},\bigoplus_{0\le\textcolor{red}{j}<\ell},\,\bigotimes_{0\le\textcolor{red}{j}<\ell},\,\bigvee_{0\le\textcolor{red}{j}<\ell},\,\bigwedge_{0\le\textcolor{red}{j}<\ell}
\]
respectively associated with the summation, the product, the maximum,
the minimum, the union, the intersection, the cartesian product, the
direct sum, the tensor product, the boolean disjunction and the boolean
conjunction. For instance, the product of second-order constructs
$\mathbf{A}$ and $\mathbf{B}$ of size respectively $m\times\textcolor{red}{\ell}$
and $\textcolor{red}{\ell}\times n$ results in a construct $\mathbf{C}$
of size $m\times n$ noted
\[
\mathbf{C}=\text{CProd}{}_{\text{Op},\mathcal{F}}\left(\mathbf{A},\mathbf{B}\right).
\]
The entries of $\mathbf{C}$ \footnote{For the readers convenience, the present note is interspersed with
illustrative SageMath \cite{sage} code snippets which use the Hypermatrix
Algebra Package available at the link \url{https://github.com/gnang/HypermatrixAlgebraPackage}} are specified by 
\[
\mathbf{C}\left[i,j\right]=\begin{array}{c}
\\
\text{Op}\\
^{0\le\textcolor{red}{t}<\ell}
\end{array}\mathcal{F}\left(\mathbf{A}\left[i,\textcolor{red}{t}\right],\,\mathbf{B}\left[\textcolor{red}{t},j\right]\right),\ \forall\,\begin{cases}
\begin{array}{c}
0\le i<m\\
0\le j<n
\end{array}.\end{cases}
\]
More specifically, the product of $2\times2$ constructs is given
by 
\[
\mbox{CProd}_{\text{Op},\mathcal{F}}\left(\left(\begin{array}{cc}
a_{00} & a_{01}\\
a_{10} & a_{11}
\end{array}\right),\left(\begin{array}{cc}
b_{00} & b_{01}\\
b_{10} & b_{11}
\end{array}\right)\right)=\left(\begin{array}{ccc}
\text{Op}\left(\mathcal{F}\left(a_{00},b_{00}\right),\mathcal{F}\left(a_{01},b_{10}\right)\right) &  & \text{Op}\left(\mathcal{F}\left(a_{00},b_{01}\right),\mathcal{F}\left(a_{01},b_{11}\right)\right)\\
\\
\text{Op}\left(\mathcal{F}\left(a_{10},b_{00}\right),\mathcal{F}\left(a_{11},b_{10}\right)\right) &  & \text{Op}\left(\mathcal{F}\left(a_{10},b_{01}\right),\mathcal{F}\left(a_{11},b_{11}\right)\right)
\end{array}\right).
\]
The algebra of constructs therefore generalizes the algebra of matrices
and hypermatrices. In particular, the simplest way to recover the
usual matrix product from the product of second order constructs is
obtained by setting the composer and combinator to the product and
sum respectively,

\[
\mathcal{F}\left(x,y\right)\,:=x\times y\quad\text{ and }\quad\begin{array}{c}
\\
\text{Op}\\
^{0\le\textcolor{red}{t}<\ell}
\end{array}\,:=\sum_{0\le\textcolor{red}{t}<\ell}.
\]
The corresponding SageMath code setup is as follows\\
\begin{sageverbatim}
sage: # Loading the Hypermatrix Algebra Package into SageMath
sage: load('./Hypermatrix_Algebra_Package_code.sage')
sage: 
sage: # Initialization of the constructs which in this
sage: # particular case are symbolic 2x2 matrices
sage: mA=HM(2,2,'a'); mB=HM(2,2,'b')
sage: mA
[[a00, a01], [a10, a11]]
sage: mB
[[b00, b01], [b10, b11]]
sage: 
sage: # Computing the construct product where
sage: # the combinator is set to : sum
sage: # and the composer is set to : prod
sage: mC=CProd([mA, mB], sum, prod)
sage: mC
[[a00*b00 + a01*b10, a00*b01 + a01*b11], [a10*b00 + a11*b10, a10*b01 + a11*b11]]
\end{sageverbatim}\\
The code above illustrates the initialization of constructs 
\[
\mathbf{mA}\in\left(\mathbb{C}\left[a_{00},a_{10},a_{01},a_{11}\right]\right)^{2\times2},\;\mathbf{mB}\in\left(\mathbb{C}\left[b_{00},b_{10},b_{01},b_{11}\right]\right)^{2\times2},
\]
such that
\[
\mathbf{mA}=\left(\begin{array}{rr}
a_{00} & a_{01}\\
a_{10} & a_{11}
\end{array}\right),\quad\mathbf{mB}=\left(\begin{array}{rr}
b_{00} & b_{01}\\
b_{10} & b_{11}
\end{array}\right),
\]
and also illustrates the computation of the product

\begin{sagesilent}
# Loading the Hypermatrix Algebra Package into SageMath
load('./Hypermatrix_Algebra_Package_code.sage')

# Initialization of the constructs which in this
# particular case are symbolic 2x2 matrices
mA=HM(2,2,'a'); mB=HM(2,2,'b')

# Computing the construct product where
# the combinator is set to : sum
# and the composer is set to : prod
mC=CProd([mA, mB], sum, prod)
\end{sagesilent}
\[
\mathbf{mC}=\mbox{CProd}_{\sum,\times}\left(\mathbf{mA},\mathbf{mB}\right)=\left(\begin{array}{ccc}
\sage{mC[0,0]} &  & \sage{mC[0,1]}\\
\\
\sage{mC[1,0]} &  & \sage{mC[1,1]}
\end{array}\right).
\]
Other less familiar variants of matrix multiplication arise as special
instances of products of constructs. For instance, H. Crane recently
introduced in \cite{crane2015} a set-valued matrix product used to
characterize the class of exchangeable Lipschitz partition processes.
The set-valued matrix product introduced in \cite{crane2015} expresses
the product of second order constructs whose composer and combinator
are respectively set intersections and set unions
\[
\mathcal{F}\left(X,Y\right)\,:=X\cap Y\quad\text{ and }\quad\begin{array}{c}
\\
\text{Op}\\
^{0\le\textcolor{red}{t}<\ell}
\end{array}\,:=\bigcup_{0\le\textcolor{red}{t}<\ell}.
\]
The SageMath code setup for multiplying set valued matrices is as
follows\\
\begin{sageverbatim}
sage: # Loading the Hypermatrix Algebra Package into SageMath
sage: load('./Hypermatrix_Algebra_Package_code.sage')
sage: 
sage: # Initialization of the construct whose
sage: # indivdual entries are set taken 
sage: # from the power set of {1, 2, 3}
sage: sA=HM([[Set([1, 2]), Set([1, 2, 3])], [Set([1]), Set([2, 3])]])
sage: sA
[[{1, 2}, {1, 2, 3}], [{1}, {2, 3}]]
sage: sB=HM([[Set([1, 2, 3]), Set([2])], [Set([1, 3]), Set([1, 3])]])
sage: sB
[[{1, 2, 3}, {2}], [{1, 3}, {1, 3}]]
sage: # Computing the construct product where
sage: # the combinator is set to : SetUnion
sage: # and the composer is set to : SetIntersection
sage: sC=CProd([sA, sB], SetUnion, SetIntersection)
[[{1, 2, 3}, {1, 2, 3}], [{1, 3}, {3}]]
\end{sageverbatim}\\
The code above illustrates the initialization of constructs $\mathbf{sA}$,
$\mathbf{sB}\in\left(\powerset\left(\left\{ 1,2,3\right\} \right)\right)^{2\times2}$
as
\[
\mathbf{sA}=\left(\begin{array}{ccc}
\left\{ 1,2\right\}  &  & \left\{ 1,2,3\right\} \\
\\
\left\{ 1\right\}  &  & \left\{ 2,3\right\} 
\end{array}\right),\quad\mathbf{sB}=\left(\begin{array}{ccc}
\left\{ 1,2,3\right\}  &  & \left\{ 2\right\} \\
\\
\left\{ 1,3\right\}  &  & \left\{ 1,3\right\} 
\end{array}\right),
\]
and illustrates the computation of the product\begin{sagesilent}
# Loading the Hypermatrix Algebra Package into SageMath
load('./Hypermatrix_Algebra_Package_code.sage')

# Initialization of the construct whose
# indivdual entries are set taken 
# from the power set of {1, 2, 3}
sA=HM([[Set([1, 2]), Set([1, 2, 3])], [Set([1]), Set([2, 3])]]) 
sB=HM([[Set([1, 2, 3]), Set([2])], [Set([1, 3]), Set([1, 3])]])

# Computing the construct product where
# the combinator is set to : SetUnion
# and the composer is set to : SetIntersection
sC=CProd([sA, sB], SetUnion, SetIntersection)
\end{sagesilent}
\[
\mathbf{sC}=\mbox{CProd}_{\bigcup,\,\cap}\left(\mathbf{sA},\mathbf{sB}\right)=\left(\begin{array}{ccc}
\sage{sC[0,0]} &  & \sage{sC[0,1]}\\
\\
\sage{sC[1,0]} &  & \sage{sC[1,1]}
\end{array}\right).
\]
In particular note that for all $\mathbf{M}\in\left(\powerset\left(\left\{ 1,2,3\right\} \right)\right)^{2\times2}$,
where $\powerset\left(\left\{ 1,2,3\right\} \right)$ denotes the
power set of $\left\{ 1,2,3\right\} $ we have 
\[
\mbox{CProd}_{\bigcup,\,\cap}\left(\left(\begin{array}{ccc}
\left\{ 1,2,3\right\}  &  & \emptyset\\
\\
\emptyset &  & \left\{ 1,2,3\right\} 
\end{array}\right),\:\mathbf{M}\right)=\mathbf{M}=\mbox{CProd}_{\bigcup,\,\cap}\left(\mathbf{M},\:\left(\begin{array}{ccc}
\left\{ 1,2,3\right\}  &  & \emptyset\\
\\
\emptyset &  & \left\{ 1,2,3\right\} 
\end{array}\right)\right).
\]
More generally, the identity element for GProd$_{\bigcup,\,\cap}$
over $\left(\powerset\left(S\right)\right)^{n\times n}$ for a given
set $S$ is the $n\times n$ construct 
\[
\left(\begin{array}{cccc}
S & \emptyset & \cdots & \emptyset\\
\emptyset & \ddots & \ddots & \vdots\\
\vdots & \ddots & \ddots & \emptyset\\
\emptyset & \cdots & \emptyset & S
\end{array}\right).
\]
It follows from De Morgan's laws that the identity element for GProd$_{\bigcap,\,\cup}$
over $\left(\powerset\left(S\right)\right)^{n\times n}$ is the $n\times n$
construct
\[
\left(\begin{array}{cccc}
\emptyset & S & \cdots & S\\
S & \ddots & \ddots & \vdots\\
\vdots & \ddots & \ddots & S\\
S & \cdots & S & \emptyset
\end{array}\right).
\]
Similarly, a product of boolean-valued matrices also arises as a special
instance of the product of second order constructs. In the setting
of boolean-valued matrices the composer and combinator are respectively
the boolean conjunction and the boolean disjunction
\[
\mathcal{F}\left(X,Y\right)\,:=X\wedge Y\quad\text{ and }\quad\begin{array}{c}
\\
\text{Op}\\
^{0\le\textcolor{red}{t}<\ell}
\end{array}\,:=\bigvee_{0\le\textcolor{red}{t}<\ell}.
\]
The SageMath code setup for multiplying boolean-valued matrices is
as follows\\
\begin{sageverbatim}
sage: # Loading the Hypermatrix Algebra Package into SageMath
sage: load('./Hypermatrix_Algebra_Package_code.sage')
sage: 
sage: # Initialization of the construct whose indivdual
sage: #  entries are boolean values in {True, False}
sage: bA=HM([[True, False], [True,  True]])
sage: bA
[[True, False], [True, True]]
sage: bB=HM([[False, True], [True, False]])
sage: bB
[[False, True], [True, False]]
sage: # Computing the construct product where
sage: # the combinator is set to : boolean disjunction
sage: # and the composer is set to : boolean conjunction
sage: bC=CProd([bA, bB], Or, And)
sage: bC
[[False, True], [True, True]]
\end{sageverbatim}\\
The code above illustrates the initialization of constructs $\mathbf{bA}$,
$\mathbf{bB}\in\left(\left\{ \text{True},\,\text{False}\right\} \right)^{2\times2}$
as\begin{sagesilent}
# Loading the Hypermatrix Algebra Package into SageMath
load('./Hypermatrix_Algebra_Package_code.sage')

# Initialization of the construct whose indivdual
#  entries are boolean values in {True, False}
bA=HM([[True, False], [True,  True]]) 
bB=HM([[False, True], [True, False]])

# Computing the construct product where
# the combinator is set to : boolean disjunction
# and the composer is set to : boolean conjunction
bC=CProd([bA, bB], Or, And)
\end{sagesilent}
\[
\mathbf{bA}=\left(\begin{array}{ccc}
\text{True} &  & \text{False}\\
\\
\text{True} &  & \text{True}
\end{array}\right),\quad\mathbf{bB}=\left(\begin{array}{ccc}
\text{False} &  & \text{True}\\
\\
\text{True} &  & \text{False}
\end{array}\right),
\]
and illustrates the computation of the product
\[
\mathbf{bC}=\mbox{CProd}_{\bigvee,\,\wedge}\left(\mathbf{bA},\mathbf{bB}\right)=\left(\begin{array}{ccc}
\sage{bC[0,0]} &  & \sage{bC[0,1]}\\
\\
\sage{bC[1,0]} &  & \sage{bC[1,1]}
\end{array}\right).
\]
In particular note that for all $\mathbf{M}\in\left(\left\{ \text{True},\,\text{False}\right\} \right)^{2\times2}$,
we have 
\[
\mbox{CProd}_{\bigvee,\,\wedge}\left(\left(\begin{array}{ccc}
\text{True} &  & \text{False}\\
\\
\text{False} &  & \text{True}
\end{array}\right),\:\mathbf{M}\right)=\mathbf{M}=\mbox{GProd}_{\bigvee,\,\wedge}\left(\mathbf{M},\:\left(\begin{array}{ccc}
\text{True} &  & \text{False}\\
\\
\text{False} &  & \text{True}
\end{array}\right)\right).
\]
More generally, the identity element for CProd$_{\bigvee,\,\wedge}$
over $\left(\left\{ \text{True},\,\text{False}\right\} \right)^{n\times n}$
is the $n\times n$ construct
\[
\left(\begin{array}{cccccc}
\text{True} &  & \text{False} & \cdots &  & \text{False}\\
\\
\text{False} &  & \ddots & \ddots &  & \vdots\\
\vdots &  & \ddots & \ddots &  & \text{False}\\
\\
\text{False} &  & \cdots & \text{False} &  & \text{True}
\end{array}\right).
\]
It follows from De Morgan's laws that the identity element for CProd$_{\bigwedge,\,\vee}$
over $\left(\left\{ \text{True},\,\text{False}\right\} \right)^{n\times n}$
is the $n\times n$ construct 
\[
\left(\begin{array}{cccccc}
\text{False} &  & \text{True} & \cdots &  & \text{True}\\
\\
\text{True} &  & \ddots & \ddots &  & \vdots\\
\vdots &  & \ddots & \ddots &  & \text{True}\\
\\
\text{True} &  & \cdots & \text{True} &  & \text{False}
\end{array}\right).
\]
 A fourth illustration arises from the setting where the composer
and combinator correspond respectively to the tensor product and the
direct sum 
\[
\mathcal{F}\left(x,y\right)\,:=x\otimes y\quad\text{ and }\quad\begin{array}{c}
\\
\text{Op}\\
^{0\le\textcolor{red}{j}<\ell}
\end{array}\,:=\bigoplus_{0\le\textcolor{red}{j}<\ell}
\]
The SageMath code setup which illustrates the product constructs whose
entries are themselves matrices is as follows\\
\begin{sageverbatim}
sage: # Loading the Hypermatrix Algebra Package into SageMath
sage: load('./Hypermatrix_Algebra_Package_code.sage')
sage: 
sage: # Initialization of the symbolic variables
sage: var('a00, a10, a01, a11, b00, b10, b01, b11')
(a00, a10, a01, a11, b00, b10, b01, b11)
sage: 
sage: # Initialization of the second order constructs
sage: # whose entries are 1x1 matrices (for simplicity)
sage: A0=HM(2,2,[HM(1,1,[a00]), HM(1,1,[a10]), HM(1,1,[a01]), HM(1,1,[a11])])
sage: A0
[[[[a00]], [[a01]]], [[[a10]], [[a11]]]]
sage: 
sage: A1=HM(2,2,[HM(1,1,[b00]), HM(1,1,[b10]), HM(1,1,[b01]), HM(1,1,[b11])])
sage: A1
[[[[b00]], [[b01]]], [[[b10]], [[b11]]]]
sage: 
sage: # Computing the construct product where 
sage: # the combinator is set to : DirectSum
sage: # and the composer is set to : TensorProduct
sage: A2=CProd([A0, A1], DirectSum, TensorProduct)
sage: A2[0,0]
[[a00*b00, 0], [0, a01*b10]]
\end{sageverbatim}\\
The code above illustrates the initialization of $2\times2$ constructs
$\mathbf{A}$, $\mathbf{B}$ whose individual entries are $1\times1$
matrices 
\[
\mathbf{A}=\left(\begin{array}{ccc}
\left(a_{00}\right) &  & \left(a_{01}\right)\\
\\
\left(a_{10}\right) &  & \left(a_{11}\right)
\end{array}\right),\quad\mathbf{B}=\left(\begin{array}{ccc}
\left(b_{00}\right) &  & \left(b_{01}\right)\\
\\
\left(b_{10}\right) &  & \left(b_{11}\right)
\end{array}\right),
\]
and illustrates the computation of the product\begin{sagesilent}
# Loading the Hypermatrix Algebra Package into SageMath
load('./Hypermatrix_Algebra_Package_code.sage')

# Initialization of the symbolic variables
a00, a10, a01, a11, b00, b10, b01, b11=var('a00, a10, a01, a11, b00, b10, b01, b11')

# Initialization of the second order constructs
# whose entries are 1x1 matrices (for simplicity)
A0=HM(2,2,[HM(1,1,[a00]), HM(1,1,[a10]), HM(1,1,[a01]), HM(1,1,[a11])])
A1=HM(2,2,[HM(1,1,[b00]), HM(1,1,[b10]), HM(1,1,[b01]), HM(1,1,[b11])])

# Computing the construct product where 
# the combinator is set to : DirectSum
# and the composer is set to : TensorProduct
A2=CProd([A0, A1], DirectSum, TensorProduct)
\end{sagesilent}
\[
\mathbf{C}=\mbox{CProd}_{\bigoplus,\,\otimes}\left(\mathbf{A},\mathbf{B}\right)=\left(\begin{array}{ccc}
\sage{A2[0,0].matrix()} &  & \sage{A2[0,1].matrix()}\\
\\
\sage{A2[1,0].matrix()} &  & \sage{A2[1,1].matrix()}
\end{array}\right).
\]
Note that, replacing each matrix entry of CProd$_{\bigoplus,\,\otimes}\left(\mathbf{A},\mathbf{B}\right)$
by the corresponding trace, yields yet another way of recovering the
usual matrix product as a special instance of product of constructs.

The product of third-order constructs $\mathbf{A}$, $\mathbf{B}$
and $\mathbf{C}$ respectively of size $m\times\textcolor{red}{\ell}\times p$,
$m\times n\times\textcolor{red}{\ell}$ and $\textcolor{red}{\ell}\times n\times p$
is a construct of size $m\times n\times p$ specified entry-wise by
\[
\mbox{CProd}_{\text{Op},\mathcal{F}}\left(\mathbf{A},\mathbf{B},\mathbf{C}\right)\left[i,j,k\right]=\begin{array}{c}
\\
\text{Op}\\
^{0\le\textcolor{red}{t}<\ell}
\end{array}\,\mathcal{F}\left(\mathbf{A}\left[i,\textcolor{red}{t},k\right],\,\mathbf{B}\left[i,j,\textcolor{red}{t}\right],\,\mathbf{C}\left[\textcolor{red}{t},j,k\right]\right).
\]
The corresponding SageMath code setup is as follows\\
\begin{sageverbatim}
sage: # Loading the Hypermatrix Algebra Package into SageMath
sage: load('./Hypermatrix_Algebra_Package_code.sage')
sage: 
sage: # Initialization of the constructs which in this
sage: # particular case are symbolic 2x2x2 hypermatrices
sage: hA=HM(2,2,2,'a'); hB=HM(2,2,2,'b'); hC=HM(2,2,2,'c')
sage: hA
[[[a000, a001], [a010, a011]], [[a100, a101], [a110, a111]]]
sage: hB
[[[b000, b001], [b010, b011]], [[b100, b101], [b110, b111]]]
sage: hC
[[[c000, c001], [c010, c011]], [[c100, c101], [c110, c111]]]
sage: # Computing the construct product where
sage: # the combinator is set to : sum
sage: # and the composer is set to : prod
sage: hD=CProd([hA, hB, hC], sum, prod)
sage: # Displaying some entries on screen
sage: hD[0,0,0]
a000*b000*c000 + a010*b001*c100
\end{sageverbatim}\\
The code above illustrates the initialization of constructs 
\[
\mathbf{hA}\in\left(\mathbb{C}\left[a_{000},a_{100},\cdots,a_{011},a_{111}\right]\right)^{2\times2\times2},\ \mathbf{hB}\in\left(\mathbb{C}\left[b_{000},b_{100},\cdots,b_{011},b_{111}\right]\right)^{2\times2\times2},
\]
such that
\[
\mathbf{hA}\left[:,:,0\right]=\left(\begin{array}{rr}
a_{000} & a_{010}\\
a_{100} & a_{110}
\end{array}\right),\ \mathbf{hA}\left[:,:,1\right]=\left(\begin{array}{rr}
a_{001} & a_{011}\\
a_{101} & a_{111}
\end{array}\right),
\]
\[
\mathbf{hB}\left[:,:,0\right]=\left(\begin{array}{rr}
b_{000} & b_{010}\\
b_{100} & b_{110}
\end{array}\right),\ \mathbf{hB}\left[:,:,1\right]=\left(\begin{array}{rr}
b_{001} & b_{011}\\
b_{101} & b_{111}
\end{array}\right),
\]
and also illustrates the computation of the product

\begin{sagesilent}
# Loading the Hypermatrix Algebra Package into SageMath
load('./Hypermatrix_Algebra_Package_code.sage')

# Initialization of the constructs which in this
# particular case are symbolic 2x2x2 hypermatrices
hA=HM(2,2,2,'a'); hB=HM(2,2,2,'b'); hC=HM(2,2,2,'c')

# Computing the construct product where
# the combinator is set to : sum
# and the composer is set to : prod
hD=CProd([hA, hB, hC], sum, prod)

\end{sagesilent}
\[
\mathbf{hD}=\mbox{CProd}_{\sum,\times}\left(\mathbf{hA},\mathbf{hB},\mathbf{hC}\right)
\]
where
\[
\mathbf{hD}\left[:,:,0\right]=\left(\begin{array}{ccc}
\sage{hD[0,0,0]} &  & \sage{hD[0,1,0]}\\
\\
\sage{hD[1,0,0]} &  & \sage{hD[1,1,0]}
\end{array}\right),
\]
\[
\mathbf{hD}\left[:,:,1\right]=\left(\begin{array}{ccc}
\sage{hD[0,0,1]} &  & \sage{hD[0,1,1]}\\
\\
\sage{hD[1,0,1]} &  & \sage{hD[1,1,1]}
\end{array}\right).
\]
More generally, the product of $m$-th order \emph{constructs} generalizes
the Bhattacharya and Mesner hypermatrix product introduced in \cite{MB90,MB94}
as follows
\[
\mbox{CProd}_{\text{Op},\mathcal{F}}\left(\mathbf{A}^{(0)},\cdots,\mathbf{A}^{(t)},\cdots,\mathbf{A}^{(m-1)}\right)\left[i_{0},\cdots,i_{t},\cdots,i_{m-1}\right]=
\]
\begin{equation}
\begin{array}{c}
\\
\text{Op}\\
^{0\le\textcolor{red}{j}<\ell}
\end{array}\mathcal{F}\left(\mathbf{A}^{(0)}\left[i_{0},\textcolor{red}{j},i_{2},\cdots,i_{m-1}\right],\cdots,\mathbf{A}^{(t)}\left[i_{0},\cdots,i_{t},\textcolor{red}{j},i_{t+2},\cdots,i_{m-1}\right],\cdots,\mathbf{A}^{(m-1)}\left[\textcolor{red}{j},i_{1},\cdots,i_{m-1}\right]\right).\label{BM product-1}
\end{equation}
Note that over any field $\mathbb{K}$ equipped with a well defined
exponentiation operation, there is an external/internal composer duality
which relates special choices of composers as illustrated by the following
identities :
\[
\forall\ \mathbf{A}\in\mathbb{K}^{m\times\textcolor{red}{\ell}}\ \text{ and }\ \mathbf{B}\in\mathbb{K}^{\textcolor{red}{\ell}\times n}
\]
\[
\begin{cases}
\begin{array}{ccccc}
\text{CProd}_{\sum,\,\times}\left(\mathbf{A},\mathbf{B}\right) & = & \text{CProd}_{\sum,\,\mathcal{F}}\left(\mathbf{A}z,\,\mathbf{B}\right) & \text{ where } & \mathcal{F}\left(f\left(z\right),g\left(z\right)\right)=f\left(g\left(z\right)\right),\\
\\
\text{CProd}_{\prod,\,\exp}\left(\mathbf{A},\mathbf{B}\right) & = & \text{CProd}_{\prod,\,\mathcal{F}}\left(\mathbf{A}^{\circ^{z}},\mathbf{B}\right) & \text{ where } & \mathcal{F}\left(f\left(z\right),g\left(z\right)\right)=f\left(g\left(z\right)\right),\\
\\
\text{CProd}_{\prod,\,\text{base}\exp}\left(\mathbf{A},\mathbf{B}\right) & = & \text{CProd}_{\prod,\,\mathcal{F}}\left(z^{\circ^{\mathbf{A}}},\mathbf{B}\right) & \text{ where } & \mathcal{F}\left(f\left(z\right),g\left(z\right)\right)=f\left(g\left(z\right)\right),
\end{array}\end{cases}
\]
\[
\text{where}
\]
\[
\left(\mathbf{A}^{\circ^{z}}\right)\left[i,j\right]=\begin{cases}
\begin{array}{cc}
\left(\mathbf{A}\left[i,j\right]\right)^{z} & \text{ if }\mathbf{A}\left[i,j\right]\ne0\\
0 & \text{otherwise}
\end{array},\end{cases}\quad\left(z^{\circ^{\mathbf{A}}}\right)\left[i,j\right]=z^{\mathbf{A}\left[i,j\right]}.
\]

\section{Algebraic constructs}

\subsection{The computational model}

Determining the complexity of solutions to systems of equations is
the main motivation for algebraic constructs. The computational model
which underpins algebraic constructs is based upon combinational and
arithmetic circuits \cite{AZ,Burgisser,DG,GRS,GZ,Koiran,Svaiter}.
Recall that arithmetic circuits are finite directed acyclic graphs
in which non-input nodes (or gates) are labeled with arithmetic operations
$\left\{ +,\,\times,\,\mathcircumflex,\,\frac{\partial}{\partial x},\,\text{log},\,\text{mod}\right\} $
respectively associated with addition, multiplication, exponentiation,
partial derivative, logarithm and the modular gates. For simplicity
each gate has fan-in equal to two. The partial derivative gate returns
the partial derivative of the right input where the left input specfies
the order of the derivative. The logarithm gate outputs the multivalued
logarithmic orbit associated with the logarithm of the right input
where the left input specifies the base of the logarithm. The modular
gates output the multivalued remainder orbit associated with the Euclidean
division of the left input by the right input. Note that the exponentiation
gate also outputs a multivalued orbit. Labels of input node are taken
from the set $\left\{ -1,1\right\} $ as well as a finite set of symbolic
variables. Variables in this model are seen as place holders for arithmetical
circuits whose determination might or should be postponed. Recall
that arithmetic formulas are special arithmetic circuits. The directed
acyclic graph of formulas are rooted trees whose edges are oriented
to point towards the root. Because, formulas are a simpler class of
circuits, our discussion will focus mainly on formulas. Given $f,g,h$
arbitrary valid arithmetic formulas from the model, equivalence classes
which relate valid arithmetic formulas are prescribed by transformation
rules which include
\begin{itemize}
\item Commutativity 
\[
\begin{array}{c}
f+g\longleftrightarrow g+f\\
f\times g\longleftrightarrow g\times f
\end{array},
\]
\item Associativity 
\[
\begin{array}{c}
\left(f+g\right)+h\longleftrightarrow f+\left(g+h\right)\\
\left(f\times g\right)\times h\longleftrightarrow f\times\left(g\times h\right)
\end{array},
\]
\item Distributivity
\[
\begin{array}{c}
f\times\left(g+h\right)\longleftrightarrow f\times g+f\times h\\
f\mathcircumflex\left(g+h\right)\longleftrightarrow f\mathcircumflex g\times f\mathcircumflex h\\
\left(f\times g\right)\mathcircumflex h\longleftrightarrow f\mathcircumflex h\times g\mathcircumflex h
\end{array},
\]
\item Unit element 
\[
\begin{array}{ccc}
f\times1 & \longleftrightarrow & f\\
f\mathcircumflex1 & \longleftrightarrow & f\\
1\mathcircumflex f & \longleftrightarrow & 1\\
f+\left(-1+1\right) & \longleftrightarrow & f\\
f\times\left(-1+1\right) & \longleftrightarrow & -1+1\\
f\mathcircumflex\left(-1+1\right) & \longleftrightarrow & 1
\end{array},
\]
\item Log gate 
\[
\begin{array}{ccc}
\text{log}\left(h^{f}\times h^{g},h\right) & \longleftrightarrow & f+g\\
\text{log}\left(f\mathcircumflex g,f\right) & \longleftrightarrow & g\\
\text{log}\left(1,f\right) & \longleftrightarrow & -1+1
\end{array},
\]
\item Mod gate 
\[
\begin{array}{ccc}
\text{mod}\left(\text{mod}\left(f,h\right)+\text{mod}\left(g,h\right),h\right) & \longleftrightarrow & \text{mod}\left(f+g,h\right)\\
\text{mod}\left(\text{mod}\left(f,h\right)\times\text{mod}\left(g,h\right),h\right) & \longleftrightarrow & \text{mod}\left(f\times g,h\right)\\
\text{mod}\left(-1+1,f\right) & \longleftrightarrow & -1+1
\end{array},
\]
\item Partial derivative gate
\[
\begin{array}{ccc}
\frac{\partial}{\partial x_{i}}\left(f+g\right) & \longleftrightarrow & \frac{\partial}{\partial x_{i}}f+\frac{\partial}{\partial x_{i}}g\\
\frac{\partial}{\partial x_{i}}\left(f\times g\right) & \longleftrightarrow & g\times\frac{\partial}{\partial x_{i}}f+f\times\frac{\partial}{\partial x_{i}}g
\end{array}.
\]
\end{itemize}
In the partial listing of transformation rules specified above, the
notation $f\longleftrightarrow g$ posits that the arithmetic formulas
$f$ and $g$ belong to the same equivalence class. Similarly the
notation $f\not\leftrightarrow g$ posits that the arithmetic formulas
$f$ and $g$ belong to distinct equivalence classes. The model invalidates
any circuits which admits a sub-circuit which belongs to forbidden
equivalence classes which include the class of formulas associated
$\left(-1+1\right)\mathcircumflex\left(-1\right)$ or $\left(-1+1\right)\mathcircumflex\left(-1+1\right)$
or log$\left(-1+1,f\right)$ or log$\left(f,-1+1\right)$ or mod$\left(f,\,-1+1\right)$. 

\subsection{Arithmetic formula complexity}

Every valid formula, admits a canonical \textbf{prefix} string encoding.
In particular if the formula of interest is free of any symbolic variable,
then the corresponding prefix encoding is a string made up of characters
from the alphabet $\left\{ -1,\,1,\,+,\,\times,\,\mathcircumflex,\,\text{log},\,\text{mod}\right\} $.
For example 
\[
\text{the prefix string encoding of the formula}\:\text{log}_{1+1}\left\{ (1+1)^{(1+1)}\right\} \text{ is the string }\text{log}+11\mathcircumflex+11+11.
\]
The size of a given formula refers to the length of its prefix string
encoding. The complexity of a given formula refers to the minimal
size of any formula which lies in the same equivalence class. For
illustration purposes, we describe a set recurrence formula. The proposed
set recurrence generalizes combinatorial constructions introduced
in \cite{GZ,GRS} for determining the complexity of formula encoding
of numbers. More precisely, the recurrence stratifies formulas according
to their complexity. The initial conditions for the set recurrence
are specified by
\[
A_{0}=\emptyset,\,A_{1}=\left\{ -1,1\right\} ,\,A_{2}=\emptyset.
\]
The recurrence relation prescribes for all integers $n>2$, a finite
sets of arithmetic formulas. For simplicity, we further restrict gates
appearing in the formula to $\left\{ +,\,\times,\,\mathcircumflex\right\} $.
Let $A_{n}^{+}$, $A_{n}^{\times}$, $A_{n}^{\mathcircumflex}$ denote
sets of formulas whose root node correspond to an addition ($+$),
a multiplication ($\times$) and an exponentiation ($\mathcircumflex$)
gate respectively :
\[
A_{n}^{+}=\bigcup_{\begin{array}{c}
s\in A_{i},\,t\in A_{n-i-1}\\
s+t\not\leftrightarrow-1+1
\end{array}}\left(s+t\right),
\]
\[
A_{n}^{\times}=\bigcup_{\begin{array}{c}
s\in A_{i},\,t\in A_{n-i-1}\\
s\times t\not\leftrightarrow1
\end{array}}\left(s\times t\right),
\]
\[
A_{n}^{\mathcircumflex}=\bigcup_{\begin{array}{c}
s\in A_{i},\,t\in A_{n-i-1}\\
s^{t}\not\leftrightarrow1
\end{array}}\left(s^{t}\right),
\]
\begin{equation}
A_{n}=A_{n}^{+}\cup A_{n}^{\times}\cup A_{n}^{\mathcircumflex}.\label{Set recurrence}
\end{equation}
By construction, the set $A_{n}$ collects all reduced arithmetic
formulas whose gates are restricted to $\left\{ +,\,\times,\,\mathcircumflex\right\} $
( which are free of any symbolic variable ) of size $n$. A formula
free of any variables is \emph{reduced} if none of its sub-formulas
of size $>2$ lie in the equivalence classes of either $1$ or $\left(-1+1\right)$.
Recall that a formula free of any variable is said to be \emph{monotone}
if the formula does not make use of the input $-1$. The set recurrence
prescribed above, determines the non-monotone complexity of relatively
small formulas. For instance the set recurrence above establishes
that in this restricted setting the non-monotone complexity of the
integer $-512$ is equal to $13$ which is also the complexity of
of any solution to the algebraic equation
\[
10\,x^{6}+98\,x^{5}+379\,x^{4}+712\,x^{3}+671\,x^{2}+290\,x+50=0.
\]
Note that for all integers $n>2$, the following bounds on the cardinality
of $A_{n}$ noted $\left|A_{n}\right|$ holds
\[
\begin{cases}
\begin{array}{cccccc}
 &  & \left|A_{n}\right| & = & 0 & \text{ if }n\equiv0\mod2\\
\frac{2}{n+1}{n-1 \choose \frac{n-1}{2}} & < & \left|A_{n}\right| & < & 5^{n} & \text{otherwise}
\end{array} & .\end{cases}
\]
While some system of algebraic equation admit solution expressible
as monotone formula encodings it of interest to quantify the reduction
in size achieved by obtaining minimal non-monotone encodings of such
solutions. The following conjecture is motivated by this question
\begin{conjecture}
The asymptotic gap between the monotone and non-monotone complexity
of the positive integer $a_{n}$ in the sequence 
\[
a_{0}=2-1,\quad a_{n+1}=2^{\left(1+a_{n}\right)}-1
\]
of formula encoding whose gates and inputs are respectively restricted
to $\left\{ +,\,\times,\,\mathcircumflex\,\right\} $ and $\left\{ -1,\,1\right\} $
is $\widetilde{O}\left(a_{n-1}\right)$ allowing for poly-logarithmic
factors.
\end{conjecture}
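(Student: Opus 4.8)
The plan is to estimate the monotone and non-monotone complexities of $a_n$ separately and take their difference. Throughout I set $b_{k}:=1+a_{k}$, which turns the given recurrence into the clean tower
\[
b_{0}=2,\qquad b_{k}=2^{\,b_{k-1}}=(1+1)^{\,b_{k-1}},
\]
so that $b_{k}$ is the power tower of $k+1$ twos and
\[
a_{n}=2^{\,b_{n-1}}-1=2^{\,2^{\,b_{n-2}}}-1 .
\]
First I would bound the non-monotone complexity. Nesting $n$ exponentiation gates over the base $1+1$ produces a formula for $b_{n}$, and attaching the input $-1$ at an addition root gives $a_{n}=b_{n}+(-1)$. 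The prefix string of this formula has length $O(n)$, so the non-monotone complexity of $a_{n}$ is $O(n)$, i.e. negligible on the scale of $a_{n-1}$.

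The crux is the monotone (subtraction-free) upper bound. Since the exponent $b_{n-1}=2^{\,b_{n-2}}$ is itself a power of two, the repunit $2^{\,b_{n-1}}-1$ has exponent $2^{m}$ with $m=b_{n-2}$, and I would invoke the Fermat-number product identity
\[
2^{\,2^{m}}-1=\prod_{k=0}^{m-1}\left(2^{\,2^{k}}+1\right),
\]
which writes $a_{n}$ as a subtraction-free product of $m=b_{n-2}$ factors. Each factor $2^{\,2^{k}}+1$ admits the monotone formula $(1+1)^{(1+1)^{\langle k\rangle}}+1$, where $\langle k\rangle$ is a minimal monotone encoding of the integer $k$, of size $O(\log k)$; hence the $k$-th factor costs $O(\log k)$ symbols. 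Summing over $k<m$ and inserting the $m-1$ multiplication gates yields a monotone formula for $a_{n}$ of total size $O(m\log m)=\widetilde{O}(b_{n-2})=\widetilde{O}(a_{n-2})$, where the logarithmic factor is exactly the poly-logarithmic slack the statement allows.

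Combining the two estimates, the gap --- the excess of the monotone over the non-monotone complexity --- is at most $\widetilde{O}(a_{n-2})$, which lies comfortably inside the claimed $\widetilde{O}(a_{n-1})$; indeed, because $a_{n-1}=2^{\Theta(a_{n-2})}$, the Fermat factorization certifies the conjectured bound with exponential room to spare. I expect two points to carry the real difficulty. The first, minor, obstacle is semantic: within the paper's multivalued reading of the $\mathcircumflex$ gate one must check that every exponentiation used above is single-valued, which I would secure by keeping all bases and exponents positive integers, so that the exponentiation orbit is a singleton and the formulas are genuine members of the equivalence class of $a_{n}$. The second, and genuinely hard, obstacle appears if one reads the conjecture as an exact order of growth rather than a one-sided bound: establishing a matching monotone lower bound --- showing that no subtraction-free formula beats the Fermat product by more than poly-logarithmic factors, and thereby pinning down whether the true monotone complexity sits near $a_{n-2}$ or near $a_{n-1}$ --- runs into the well-known scarcity of lower-bound techniques for monotone and subtraction-free arithmetic formulas, since one must rule out every algebraic identity and factorization that an optimal encoding might exploit.
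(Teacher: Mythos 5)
You cannot actually be compared against the paper here, because the paper offers no proof of this statement: it is posed as an open conjecture, with only the surrounding remark that the non-monotone complexity of $a_{n}$ is at most $4\left(n+1\right)+1$ (which your tower-plus-$(-1)$ formula reproduces exactly). So your proposal must stand on its own, and as a construction it is sound: since $b_{n-1}=2^{b_{n-2}}$ is a power of two, the identity $2^{2^{m}}-1=\prod_{k=0}^{m-1}\left(2^{2^{k}}+1\right)$ applies with $m=b_{n-2}$; each Fermat factor has a monotone formula of size $O\left(\log k\right)$; all bases and exponents are positive integers, so the multivalued $\mathcircumflex$ orbits are singletons, and distributivity together with the exponent-expansion rule $f\mathcircumflex\left(g+h\right)\longleftrightarrow f\mathcircumflex g\times f\mathcircumflex h$ places your formula in the equivalence class of $a_{n}$. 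The resulting monotone upper bound $O\left(b_{n-2}\log b_{n-2}\right)=\widetilde{O}\left(a_{n-2}\right)$ is correct.

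The genuine problem is what this does to the conjecture, and here your proposal misidentifies its target. Read literally as a one-sided bound, the conjecture needs no Fermat numbers at all: $a_{n}$ is the repunit $\sum_{0\le j<b_{n-1}}2^{j}$, and encoding each power as $\left(1+1\right)\mathcircumflex\left\langle j\right\rangle $ with $\left\langle j\right\rangle $ of size $O\left(\log j\right)=O\left(b_{n-2}\right)$ already yields a monotone formula of size $O\left(b_{n-1}b_{n-2}\right)=\widetilde{O}\left(a_{n-1}\right)$, so under that reading the statement is a triviality, not a conjecture. The only reading with content is the tight one --- that monotone formulas cannot do essentially better than the repunit encoding, so the gap is genuinely of order $a_{n-1}$ up to polylogarithmic factors. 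Under that reading your construction does not prove the conjecture; it refutes it, because $a_{n-2}\approx\log_{2}a_{n-1}$, so your bound $\widetilde{O}\left(a_{n-2}\right)$ is merely polylogarithmic in $a_{n-1}$, exponentially below the conjectured gap. Your closing paragraph also contradicts your own result: whether the monotone complexity ``sits near $a_{n-2}$ or near $a_{n-1}$'' is not an open lower-bound question --- your upper bound already excludes $a_{n-1}$; the only remaining question is whether it can be pushed below $a_{n-2}$, e.g.\ to $\mathrm{poly}\left(n\right)$. So the correct conclusion from your argument is not that the conjecture holds with room to spare, but that the conjecture, in the only reading that makes it nontrivial, is false --- a valuable observation, but the opposite of a proof of it.
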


Note that the size of the shortest non-monotone formula of $a_{n}$
is upper-bounded by $4\left(n+1\right)+1$ for $n>0$. The exponential
growth relative to $n$ of $\left|A_{n}\right|$ calls for alternative
approaches to bounding the complexity of solutions to systems of equations.
The SageMath code setup for the set recurrence relation described
above is as follows\\
\begin{sageverbatim}
sage: # Loading the Hypermatrix Algebra Package into SageMath
sage: load('./Hypermatrix_Algebra_Package_code.sage')
sage: 
sage: # Computing the recurrence for numbers
sage: # whose non monotone formula
sage: # encoding have size at most 9
sage: L=ReducedNonMonotoneFormulaSets(9)
sage: L[:8]
[{},
 {1, -1},
 {},
 {2, -2},
 {},
 {3, -1/2, -3, 1/2},
 {},
 {-3/2, 4, I, 3/2, -1/3, 1/3, 1/4, -I, -4}]
\end{sageverbatim}\\
The code above yields
\[
A_{0}=A_{2}=A_{4}=A_{6}=A_{8}=\emptyset
\]
\[
A_{1}=\left\{ 1,-1\right\} ,\,A_{3}=\left\{ 2,-2\right\} ,\,A_{5}=\left\{ 3,-\frac{1}{2},-3,\frac{1}{2}\right\} ,\,A_{7}=\left\{ -\frac{3}{2},4,\sqrt{-1},\frac{3}{2},-\frac{1}{3},\frac{1}{3},\frac{1}{4},-\sqrt{-1},-4\right\} ,
\]
\[
A_{9}=\left\{ -\frac{5}{2},\sqrt{-2},5,-\frac{4}{3},-\frac{2}{3},\left(-1\right)^{\frac{1}{3}},\frac{5}{2},\frac{2}{3},\frac{4}{3},-\frac{1}{4},\frac{5}{4},-\frac{1}{2}\,\sqrt{-2},-\frac{1}{8},\frac{1}{8},6,\frac{1}{9},\sqrt{2},\left(-1\right)^{-\sqrt{-1}},\right.
\]
\[
\left.,-\sqrt{-1}-1,-\sqrt{-1}+1,\left(-1\right)^{\sqrt{-1}},\sqrt{-1}-1,\sqrt{-1}+1,-\frac{3}{4},\left(-1\right)^{\frac{1}{4}},\frac{1}{2}\,\sqrt{2},9,-\left(-1\right)^{\frac{2}{3}},8,-8,-6,-5\right\} .
\]
As an aside, we describe a recursive construction for determining
the roots of polynomial equations with rational coefficients. Recall
that the algebraically complete set of multilinear symmetric polynomials
are the densest set of symmetric polynomials having in total $2^{n}$
distinct monomial terms. While the algebraically complete set of Newton
elementary symmetric polynomials are the sparsest set of symmetric
polynomial in the roots having in total $n^{2}$ distinct monomial
terms. As such they are convenient starting point for the construction
of formal recursive expression of solutions to algebraic equations
Let the composer be given by
\[
\left\{ \sum_{0\le i<n}\left(x_{i}\right)^{j}=\sum_{0\le i<n}\left(r_{i}\right)^{j}\::\:j\in\left[1,n\right]\cap\mathbb{Z}\right\} ,
\]
equivalently expressed as 
\[
\left\{ \text{Cprod}_{\sum,\times}\left(\mathbf{1}_{1\times n}z,\:\text{Cprod}_{\prod,\text{BaseExp}}\left(j\mathbf{I}_{n},\,\mathbf{x}\right)\right)=\sigma_{j}\left(\mathbf{r}\right)\::\:j\in\left[1,n\right]\cap\mathbb{Z}\right\} ,
\]
where $\sigma_{j}\left(\mathbf{r}\right)$ denotes the $j$-th Newton
symmetric polynomial in the roots. The constraints are thus summarized
as follows 
\begin{equation}
\text{Cprod}_{\sum,\times}\left(\left(\mathbf{I}_{n}\otimes\mathbf{1}_{1\times n}\right),\:\text{Cprod}_{\prod,\text{BaseExp}}\left(\left(\begin{array}{c}
1\\
2\\
\vdots\\
n
\end{array}\right)\otimes\mathbf{I}_{n},\,\mathbf{x}\right)\right)-\boldsymbol{\sigma}\left(\mathbf{r}\right)=\mathbf{0}_{n\times1}.\label{Newton Symmetric Equation}
\end{equation}
For convenience we adopt the notation convention 
\[
\text{Cprod}_{\prod,\text{BaseExp}}\left(\mathbf{A},\,\mathbf{x}\right)=\mathbf{x}^{\mathbf{A}}
\]
where $\mathbf{x}$ is $n\times1$ and $\mathbf{A}$ is $m\times n$.
The constraints in Eq. (\ref{Newton Symmetric Equation}) are therefore
more simply rewritten as 
\begin{equation}
\left(\mathbf{I}_{n}\otimes\mathbf{1}_{1\times n}\right)\,\mathbf{x}^{\left(\left(\begin{array}{c}
1\\
2\\
\vdots\\
n
\end{array}\right)\otimes\mathbf{I}_{n}\right)}-\boldsymbol{\sigma}\left(\mathbf{r}\right)=\mathbf{0}_{n\times1}
\end{equation}
It is well known that solutions to univariate polynomial can be expressed
as hypergeometric functions of the coefficients \cite{sturmfelssolving,STURMFELS2000171}.
We describe a refinement of Newton's iterative method. Our propose
method expresses the roots as an infinite composition sequence. The
SageMath code setup for the expression above is as follows\\
\begin{sageverbatim}
sage: # Loading the Hypermatrix Algebra Package into SageMath
sage: load('./Hypermatrix_Algebra_Package_code.sage')
sage: 
sage: # Initialization of the size parameter
sage: sz=3
sage: # Initialization of the vector of unknowns
sage: # and of the symbolic variables associated with roots.
sage: X=HM(sz,1,var_list('x',sz)); vZ=HM(sz,1,var_list('z',sz))
sage: X.printHM()
[:, :]=
[x0]
[x1]
[x2]
sage: vZ.printHM()
[:, :]=
[z0]
[z1]
[z2]
sage: # Initialization of the identity matrix
sage: Id=HM(2,sz,'kronecker')
sage: # Initialization of the exponent matrix
sage: A=HM(sz,1,rg(1,sz+1)).tensor_product(Id)
sage: A.printHM()
[:, :]=
[1 0 0]
[0 1 0]
[0 0 1]
[2 0 0]
[0 2 0]
[0 0 2]
[3 0 0]
[0 3 0]
[0 0 3]
sage: # Initialization of the coeficient matrix
sage: B=Id.tensor_product(HM(1,sz,'one'))
sage: B.printHM()
[:, :]=
[1 1 1 0 0 0 0 0 0]
[0 0 0 1 1 1 0 0 0]
[0 0 0 0 0 0 1 1 1]
sage: (B*X^A-B*vZ^A).printHM()
[:, :]=
[            x0 + x1 + x2 - z0 - z1 - z2]
[x0^2 + x1^2 + x2^2 - z0^2 - z1^2 - z2^2]
[x0^3 + x1^3 + x2^3 - z0^3 - z1^3 - z2^3]
\end{sageverbatim}\\
For instance in the case $n=3$, Eq. (\ref{Newton Symmetric Equation})
is of the form 
\[
\left(\begin{array}{rrrrrrrrr}
1 & 1 & 1 & 0 & 0 & 0 & 0 & 0 & 0\\
0 & 0 & 0 & 1 & 1 & 1 & 0 & 0 & 0\\
0 & 0 & 0 & 0 & 0 & 0 & 1 & 1 & 1
\end{array}\right)\cdot\left(\begin{array}{r}
x_{0}\\
x_{1}\\
x_{2}
\end{array}\right)^{\left(\begin{array}{rrr}
1 & 0 & 0\\
0 & 1 & 0\\
0 & 0 & 1\\
2 & 0 & 0\\
0 & 2 & 0\\
0 & 0 & 2\\
3 & 0 & 0\\
0 & 3 & 0\\
0 & 0 & 3
\end{array}\right)}=\left(\begin{array}{rrrrrrrrr}
1 & 1 & 1 & 0 & 0 & 0 & 0 & 0 & 0\\
0 & 0 & 0 & 1 & 1 & 1 & 0 & 0 & 0\\
0 & 0 & 0 & 0 & 0 & 0 & 1 & 1 & 1
\end{array}\right)\cdot\left(\begin{array}{r}
r_{0}\\
r_{1}\\
r_{2}
\end{array}\right)^{\left(\begin{array}{rrr}
1 & 0 & 0\\
0 & 1 & 0\\
0 & 0 & 1\\
2 & 0 & 0\\
0 & 2 & 0\\
0 & 0 & 2\\
3 & 0 & 0\\
0 & 3 & 0\\
0 & 0 & 3
\end{array}\right)}
\]
 
\[
\sum_{0<k\le n}\mathbf{A}_{n-k}\cdot\left(\mathbf{x}-\mathbf{a}\right)^{k\,\mathbf{I}_{n}}=\boldsymbol{\sigma}\left(\mathbf{r}\right)-\boldsymbol{\sigma}\left(\mathbf{a}\right)
\]
where 
\begin{equation}
\mathbf{A}_{n-k}\left[j,:\right]={j \choose k}\:\left(\mathbf{a}^{\left(j-k\right)\mathbf{I}_{n}}\right)^{\top}.\label{coefficient matrices}
\end{equation}
In expressing the rows of each matrix $\mathbf{A}_{n-k}$ in Eq. (\ref{coefficient matrices}),
we adopt the notation convention 
\[
\forall\:k>j,\quad{j \choose k}=0,
\]
it follows that
\[
\text{Rank}\left(\mathbf{A}_{k}\right)=k+1\;\text{ and }\det\left(\mathbf{A}_{n-1}\right)=n!\prod_{0\le i<j<n}\left(a_{j}-a_{i}\right).
\]
The infinite composition sequence which determine the roots is expressed
by the equality
\[
\mathbf{x}-\mathbf{a}=\mathbf{A}_{n-1}^{-1}\left(\left(\boldsymbol{\sigma}\left(\mathbf{x}\right)-\boldsymbol{\sigma}\left(\mathbf{a}\right)\right)-\sum_{1<j\le d}\mathbf{A}_{n-k}\left(\mathbf{x}-\mathbf{a}\right)^{k\,\mathbf{I}_{n}}\right)
\]
The composition sequence is thus obtained by successively substituting
every occurrence of $\left(\mathbf{x}-\mathbf{a}\right)$ on the right-hand
side by the entire expression obtained at the previous iteration.
Consider the induced vector mapping 
\[
F_{\mathbf{x},\mathbf{a}}\,:\,\mathbb{C}^{n\times1}\rightarrow\mathbb{C}^{n\times1}
\]
\[
\text{such that }
\]
\[
F_{\mathbf{x},\mathbf{a}}\left(\mathbf{z}\right)=\mathbf{A}_{n-1}^{-1}\left(\left(\boldsymbol{\sigma}\left(\mathbf{x}\right)-\boldsymbol{\sigma}\left(\mathbf{a}\right)\right)-\sum_{1<j\le d}\mathbf{A}_{n-k}\,\mathbf{z}^{k\,\mathbf{I}_{n}}\right),
\]
then the expansion of a solution around the neighborhood of $\mathbf{a}$
is expressed by 
\[
\mathbf{x}=\mathbf{a}+\lim_{t\rightarrow\infty}F_{\mathbf{x},\mathbf{a}}^{\left(t\right)}\left(\mathbf{z}\right).
\]
 In the case where the roots of the corresponding univariate polynomials
are all distinct, the vector $\mathbf{a}$ breaks the symmetry among
the $n!$ possible solutions as suggested by the following conjecture.
\begin{conjecture}
The iteration expresses the particular permutation of the roots on
the complex plane if each entry of $\mathbf{a}$ lies in a distinct
Voronoi cell associated with the root induced Voronoi partition of
the complex plane for some metric.
\end{conjecture}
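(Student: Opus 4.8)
The plan is to read the stated infinite composition sequence as the orbit of a single fixed-point map and to reduce the conjecture to a statement about that map's basins of attraction. Writing $\mathbf{y}=\mathbf{x}-\mathbf{a}$ and using that $\mathbf{x}$ is a solution, so that $\boldsymbol{\sigma}(\mathbf{x})=\boldsymbol{\sigma}(\mathbf{r})$, the recursion is $\mathbf{y}_{t+1}=F_{\mathbf{x},\mathbf{a}}(\mathbf{y}_t)$ with
\[
F_{\mathbf{x},\mathbf{a}}(\mathbf{z})=\mathbf{A}_{n-1}^{-1}\left(\left(\boldsymbol{\sigma}(\mathbf{r})-\boldsymbol{\sigma}(\mathbf{a})\right)-\sum_{1<k\le n}\mathbf{A}_{n-k}\,\mathbf{z}^{k\,\mathbf{I}_{n}}\right),
\]
the only structural input being that each Newton symmetric polynomial $\sigma_{j}(\mathbf{x})=\sum_{i}x_{i}^{j}$ is separable, i.e. a sum over coordinates of the single univariate power $t\mapsto t^{j}$. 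First I would record, via Newton's identities, that the solution set of $\boldsymbol{\sigma}(\mathbf{x})=\boldsymbol{\sigma}(\mathbf{r})$ is exactly the orbit $\{\mathbf{r}_{\pi}:\pi\in S_{n}\}$ of the root vector under coordinate permutations (where $(\mathbf{r}_{\pi})_{i}=r_{\pi(i)}$), and that these are precisely the fixed points of $F_{\mathbf{x},\mathbf{a}}$. Since the frozen Jacobian $\mathbf{A}_{n-1}$ is the Vandermonde matrix with rows $j\,(a_{i}^{j-1})_{i}$ and $\det(\mathbf{A}_{n-1})=n!\prod_{i<j}(a_{j}-a_{i})\ne 0$ whenever the entries of $\mathbf{a}$ are distinct, $F_{\mathbf{x},\mathbf{a}}$ is well defined near each fixed point.

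Next I would carry out the local analysis. Differentiating the Taylor identity $\boldsymbol{\sigma}(\mathbf{a}+\mathbf{z})=\boldsymbol{\sigma}(\mathbf{a})+\sum_{k\ge 1}\mathbf{A}_{n-k}\mathbf{z}^{k\,\mathbf{I}_{n}}$ shows that the Jacobian of $\boldsymbol{\sigma}$ at $\mathbf{a}+\mathbf{z}$ equals $\sum_{k\ge1}\mathbf{A}_{n-k}\operatorname{diag}(k\,\mathbf{z}^{(k-1)\mathbf{I}_{n}})$, whence $D F_{\mathbf{x},\mathbf{a}}(\mathbf{z})=I-\mathbf{A}_{n-1}^{-1}\,\mathbf{J}(\mathbf{a}+\mathbf{z})$ with $\mathbf{J}$ the Jacobian of $\boldsymbol{\sigma}$. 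Evaluated at the fixed point $\mathbf{z}=\mathbf{r}_{\pi}-\mathbf{a}$ this is $I-\mathbf{A}_{n-1}^{-1}\mathbf{J}(\mathbf{r}_{\pi})$, whose spectral radius tends to $0$ as $\mathbf{a}\to\mathbf{r}_{\pi}$ because $\mathbf{A}_{n-1}=\mathbf{J}(\mathbf{a})$. Thus each permuted root vector is a locally attracting fixed point, and a Newton--Mysovskikh (Kantorovich-type) estimate yields an explicit ball about $\mathbf{r}_{\pi}$ inside its basin, with radius controlled from below by the separation $\min_{i\ne j}|r_{i}-r_{j}|$ and from above by $\max_{i}|a_{i}-r_{\pi(i)}|$. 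This already secures the qualitative content of the conjecture for $\mathbf{a}$ near $\mathbf{r}_{\pi}$.

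The decisive step is to identify the full basin $B_{\pi}\subseteq\mathbb{C}^{n}$ and to exhibit a metric $d$ for which the induced planar cells form the Voronoi partition generated by $r_{0},\dots,r_{n-1}$. Here my plan is, first, to exploit separability to argue that to leading order the $i$-th coordinate is driven by a univariate Newton correction for $p(t)=\prod_{i}(t-r_{i})$, which suggests a coordinate-wise decomposition $B_{\pi}=\prod_{i}C_{\pi(i)}$ with $C_{k}\subseteq\mathbb{C}$ the set of starting points attracted to $r_{k}$; establishing this exactly, despite the coupling introduced by $\mathbf{A}_{n-1}^{-1}$, is itself delicate. Second, I would attempt to construct $d$ by pulling back the Euclidean metric under a conformal change of coordinates that straightens each basin boundary $\partial C_{k}$ into an equidistant bisector, and then verify that each $C_{k}$ is the $d$-Voronoi cell of $r_{k}$.

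The hard part is exactly this last identification, and it is where I expect the argument to stall or to require the full force of the ``for some metric'' hedge. The univariate basins $C_{k}$ are the basins of a Newton-type map for $p$, and for $\deg p\ge 3$ these are, in the classical Euclidean setting, fractal (Cayley's problem), with infinitely many components and non-rectifiable boundaries; no geodesic or continuously weighted metric has Voronoi cells of that shape, so a naive conformal straightening cannot succeed globally. The conjecture is therefore only tenable if either (i) the coupling of the coordinates through the symmetric functions regularizes the frozen-Jacobian dynamics enough to make each $\partial C_{k}$ semialgebraic, in which case a weighted or Riemannian metric realizing the partition can be built, or (ii) one admits genuinely exotic metrics adapted to the iteration itself, e.g. a ``first-passage'' metric in which $d(z,r_{k})$ is a monotone functional of the escape data of the orbit through $z$. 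Deciding the regularity of $\partial C_{k}$ for the specific map $F_{\mathbf{x},\mathbf{a}}$ is thus the crux; a clean resolution would proceed either by proving a semialgebraic basin theorem for $F_{\mathbf{x},\mathbf{a}}$, or by exhibiting a near-root configuration in which no metric reconciles the two partitions, which would refute the conjecture as literally stated.
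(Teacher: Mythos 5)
The statement you are attempting is one of the paper's two \emph{conjectures}; the paper offers no proof of it, so there is no argument of record to compare yours against. Judged on its own terms, your proposal is not a proof but a (largely correct) local analysis together with an honest admission that the decisive step is missing. The parts you do carry out are sound: rewriting the iteration as the orbit of the frozen-Newton map $F_{\mathbf{x},\mathbf{a}}$, checking via the binomial/Taylor identity $\boldsymbol{\sigma}(\mathbf{a}+\mathbf{z})=\boldsymbol{\sigma}(\mathbf{a})+\sum_{k\ge 1}\mathbf{A}_{n-k}\mathbf{z}^{k\,\mathbf{I}_{n}}$ that the fixed points are exactly the permuted root vectors $\mathbf{r}_{\pi}-\mathbf{a}$, computing $DF_{\mathbf{x},\mathbf{a}}(\mathbf{z})=I-\mathbf{A}_{n-1}^{-1}\mathbf{J}(\mathbf{a}+\mathbf{z})$, and concluding local attraction when $\mathbf{a}$ is close to $\mathbf{r}_{\pi}$. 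But this only proves convergence to the ``nearest'' permutation for $\mathbf{a}$ in a small, quantitatively uncontrolled neighborhood of a solution, which is a far weaker statement than the conjecture: the conjecture asserts a \emph{global} description of which permutation is selected, valid whenever the entries of $\mathbf{a}$ lie in distinct cells of a root-induced Voronoi partition for some metric.

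The genuine gap is the one you name yourself: identifying the full basin of attraction $B_{\pi}$ with a product of Voronoi cells. Two specific links in your plan are unsupported. First, the factorization $B_{\pi}=\prod_{i}C_{\pi(i)}$ does not follow from separability of the power sums, because $\mathbf{A}_{n-1}^{-1}$ couples all coordinates at every iteration; you give no mechanism to decouple them, and without it the whole reduction to planar cells collapses. Second, even granting the factorization, your own observation that frozen-Newton basins for $\deg p\ge 3$ are generically fractal (Cayley's problem) cuts against the existence of any reasonable metric whose Voronoi cells coincide with them, so your argument as structured points as much toward refutation as toward proof. A useful contribution here would be either a proof of the semialgebraic-basin dichotomy you sketch, or an explicit numerical counterexample with $n=3$; as it stands, the proposal establishes only the local statement and leaves the conjecture's actual content open.
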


\subsection{Combinational formula complexity}

We now emphasize compelling similarities which relate arithmetic formulas
to boolean formulas. Combinational circuits are simpler than arithmetic
circuit. The comparative simplicity of combinational circuits results
from the fact that boolean circuits always express uni-valued functions
and also because every boolean functional can be encoded as a circuit
made up only of four gates. The AND gate noted ($\wedge$), the OR
gate noted ($\vee$) each having fan-in equal to two and the NOT gate
noted ($\neg$) having fan-in one. The inputs to boolean formulas
are restricted to boolean values 
\[
\left\{ \text{True},\ \text{False}\right\} .
\]
By contrast to arithmetic circuits, boolean formulas free from any
variables are trivial. Similarly to arithmetic formula, Every boolean
formula on $n$ input variables is uniquely encoded by a prefix string
encoding made up of characters from the alphabet 
\[
\left\{ \vee,\,\wedge,\,\neg,\,\text{True},\,\text{False},\,x_{0},\,x_{1},\,\cdots,x_{n-1}\right\} .
\]
By analogy to arithmetic formulas, we describe set recurrence relation
which stratifies boolean formulas according to their complexity. The
set recurrence is prescribed by initial conditions
\[
A_{0}=\emptyset,\,A_{1}=\left\{ x_{0}\right\} ,\,A_{2}=\emptyset.
\]
Let $A_{n}^{\vee}$, $A_{n}^{\wedge}$, $A_{n}^{\neg}$ respectively
denote sets of formulas whose root node is respectively an OR ($\vee$),
an AND ($\wedge$) and a NOT ($\neg$) gate. The set recurrence formulas
is
\[
A_{n}^{\vee}=\bigcup_{\begin{array}{c}
s\in A_{i},\,t\in A_{n-i-1}\\
s\vee\text{Increment Var Index}\left(t,j\right)\text{ is new}
\end{array}}\left(\bigcup_{0\le j\le\#\text{vars in }s}\left(s\vee\text{Increment Var Index}\left(t,j\right)\right)\right),
\]
\[
A_{n}^{\wedge}=\bigcup_{\begin{array}{c}
s\in A_{i},\,t\in A_{n-i-1}\\
s\wedge\text{Increment Var Index}\left(t,j\right)\text{ is new}
\end{array}}\left(\bigcup_{0\le j\le\#\text{vars in }s}\left(s\wedge\text{Increment Var Index}\left(t,j\right)\right)\right),
\]
\[
A_{n}^{\neg}=\bigcup_{\begin{array}{c}
s\in A_{n-1}\\
\neg s\text{ is new}
\end{array}}\left(\neg s\right),
\]
\[
A_{n}=A_{n}^{\vee}\cup A_{n}^{\wedge}\cup A_{n}^{\neg}.
\]
In the recurrence formula above the operation Increment Var Index,
increments the index of every variable in the boolean function specified
as the left input by the non-negative integer specified as the right
input. The SageMath code setup for the set recurrence relation described
above is as follows\\
\begin{sageverbatim}
sage: # Loading the Hypermatrix Algebra Package into SageMath
sage: load('./Hypermatrix_Algebra_Package_code.sage')
sage: 
sage: # Computing the recurrence for numbers
sage: # which number whose non monotone formula
sage: # encoding have size at most 5
sage: # A stores the formulas and L store their 
sage: # lexicographic numbering 
sage: [A, L]=ReducedNonMonotoneBooleanFormula(5)
sage: A
[[],
 [x0],
 [['NOT', x0]],
 [['AND', x0, x1], ['OR', x0, x1]],
 [['AND', x0, ['NOT', x0]],
  ['AND', x0, ['NOT', x1]],
  ['AND', ['NOT', x0], x1],
  ['OR', x0, ['NOT', x0]],
  ['OR', x0, ['NOT', x1]],
  ['OR', ['NOT', x0], x1],
  ['NOT', ['AND', x0, x1]],
  ['NOT', ['OR', x0, x1]]],
 [['AND', x0, ['AND', x1, x2]],
  ['AND', x0, ['OR', x0, x1]],
  ['AND', x0, ['OR', x1, x2]],
  ['AND', ['OR', x0, x1], x1],
  ['AND', ['OR', x0, x1], x2],
  ['OR', x0, ['AND', x1, x2]],
  ['OR', x0, ['OR', x1, x2]],
  ['OR', ['AND', x0, x1], x2]]]
sage: L
[2, 1, 12, 18, 0, 6, 8, 3, 15, 17, 11, 5, 148, 14, 188, 16, 244, 254, 274, 268]
\end{sageverbatim}\\
The code above yields
\[
A_{0}=\emptyset
\]
\[
A_{1}=\left\{ x_{0}\right\} ,\:A_{2}=\left\{ \left[\text{\texttt{NOT}},x_{0}\right]\right\} ,\ A_{3}=\left\{ \left[\text{\texttt{AND}},x_{0},x_{1}\right],\,\left[\text{\texttt{OR}},x_{0},x_{1}\right]\right\} 
\]
\[
A_{4}=\left\{ \left[\text{\texttt{AND}},x_{0},\left[\text{\texttt{NOT}},x_{0}\right]\right],\,\left[\text{\texttt{AND}},x_{0},\left[\text{\texttt{NOT}},x_{1}\right]\right],\,\left[\text{\texttt{AND}},\left[\text{\texttt{NOT}},x_{0}\right],x_{1}\right],\left[\text{\texttt{OR}},x_{0},\left[\text{\texttt{NOT}},x_{0}\right]\right],\right.
\]
\[
\left.\left[\text{\texttt{OR}},x_{0},\left[\text{\texttt{NOT}},x_{1}\right]\right],\,\left[\text{\texttt{OR}},\left[\text{\texttt{NOT}},x_{0}\right],x_{1}\right],\,\left[\text{\texttt{NOT}},\left[\text{\texttt{AND}},x_{0},x_{1}\right]\right],\,\left[\text{\texttt{NOT}},\left[\text{\texttt{OR}},x_{0},x_{1}\right]\right]\right\} ,
\]
\[
A_{5}=\left\{ \left[\text{\texttt{AND}},x_{0},\left[\text{\texttt{AND}},x_{1},x_{2}\right]\right],\,\left[\text{\texttt{AND}},x_{0},\left[\text{\texttt{OR}},x_{0},x_{1}\right]\right],\,\left[\text{\texttt{AND}},x_{0},\left[\text{\texttt{OR}},x_{1},x_{2}\right]\right],\,\left[\text{\texttt{AND}},\left[\text{\texttt{OR}},x_{0},x_{1}\right],x_{1}\right],\right.
\]
\[
\left.\left[\text{\texttt{AND}},x_{0},\left[\text{\texttt{AND}},x_{1},x_{2}\right]\right],\,\left[\text{\texttt{AND}},x_{0},\left[\text{\texttt{OR}},x_{0},x_{1}\right]\right],\,\left[\text{\texttt{AND}},x_{0},\left[\text{\texttt{OR}},x_{1},x_{2}\right]\right],\,\left[\text{\texttt{AND}},\left[\text{\texttt{OR}},x_{0},x_{1}\right],x_{1}\right]\right\} .
\]
The square brackets are meant to more clearly delineate sub-formulas.
Note that every boolean formula in $n$ variables can be converted
into an arithmetic formula which encodes a multivariate polynomial
in $\mathbb{Z}\left[x_{0},\cdots,x_{n-1}\right]$ via the following
correspondence
\[
\begin{cases}
\begin{array}{ccc}
\text{True} & \leftrightarrow & 1\\
\\
\text{False} & \leftrightarrow & 0\\
\\
\neg\,x_{i} & \leftrightarrow & 1-x_{i}\\
\\
x_{i}\vee x_{j} & \leftrightarrow & x_{i}+x_{j}-x_{i}\cdot x_{j}\\
\\
x_{i}\wedge x_{j} & \leftrightarrow & x_{i}\cdot x_{j}
\end{array}.\end{cases}
\]
For instance the algebraic expression associated with the combinational
circuits obtained in the previous SageMath experimental setup is obtained
by the following code setup\\
\begin{sageverbatim}
sage: # Loading the Hypermatrix Algebra Package into SageMath
sage: load('./Hypermatrix_Algebra_Package_code.sage')
sage: 
sage: # Computing the recurrence for numbers
sage: # which number whose non monotone formula
sage: # encoding have size at most 5
sage: # A stores the formulas and L store their 
sage: # lexicographic numbering 
sage: [A, L]=ReducedNonMonotoneBooleanFormulaPoly(5)
sage: A
[[],
 [x0],
 [-x0 + 1],
 [x0*x1, -x0*x1 + x0 + x1],
 [-(x0 - 1)*x0,
  -x0*(x1 - 1),
  -(x0 - 1)*x1,
  (x0 - 1)*x0 + 1,
  x0*(x1 - 1) + x0 - x1 + 1,
  (x0 - 1)*x1 - x0 + x1 + 1,
  -x0*x1 + 1,
  x0*x1 - x0 - x1 + 1]]
sage: L
[2, 1, 12, 18, 0, 6, 8, 3, 15, 17, 11, 5, 148, 14, 188, 16, 244, 254, 274, 268]
\end{sageverbatim}\\
Every equivalence class of elements $\mathbb{Z}\left[x_{0},\cdots,x_{n-1}\right]$
associated with a given boolean function in the entries of $\mathbf{x}$
has a unique canonical multilinear representative obtained by reducing
the polynomial modulo the algebraic relations $\left\{ x_{i}^{2}\equiv x_{i}\,:\,i\in\left[0,n\right)\cap\mathbb{Z}\right\} $.
An injective mapping from boolean functions to non-negative integers
is called a lexicographic ordering of boolean function. A natural
lexicographic ordering of boolean functions is such that $\forall\:F\,:\,\left\{ 0,1\right\} ^{n\times1}\rightarrow\left\{ 0,1\right\} $
we have 
\[
\text{lex}\left(0\right)=0,\ \text{lex}\left(1\right)=1,
\]
and more generally 
\[
\text{lex}\left(F\left(\mathbf{x}\right)\right)=\left(\sum_{0<i<n}2^{2^{i}}\right)+\sum_{\mathbf{b}\in\left\{ 0,1\right\} ^{n\times1}}F\left(\mathbf{b}\right)\prod_{0\le j<n}2^{b_{j}2^{j}},
\]
\[
\implies\sum_{0<i<n}2^{2^{i}}\le\text{lex}\left(F\left(\mathbf{x}\right)\right)<\sum_{0<i\le n}2^{2^{i}}.
\]
The SageMath code setup for obtaining the lexicographic numbering
of boolean formula is as follows\\
\begin{sageverbatim}
sage: # Loading the Hypermatrix Algebra Package into SageMath
sage: load('./Hypermatrix_Algebra_Package_code.sage')
sage: 
sage: # Computing the lexicographic ordering of
sage: # the input boolean formula
sage: Bool2Integer(['AND',['NOT',x0],x0])
0
sage: Bool2Integer(['OR',['NOT',x0],x0])
3
sage: Bool2Integer(x0)
2
sage: Bool2Integer(['NOT',x0])
1
\end{sageverbatim} 

\subsection{Systems of equations associated with CProd$_{\sum}$}

For the rest of section 2, unless otherwise specified, we set the
\emph{composer }of second order constructs to 
\[
\mathcal{F}\,:\,\mathbb{K}^{\mathbb{K}}\times\mathbb{K}^{\mathbb{K}}\rightarrow\mathbb{K}^{\mathbb{K}}
\]
 ( for some commutative or skew field $\mathbb{K}$ ) such that
\begin{equation}
\mathcal{F}\left(f\left(z\right),g\left(z\right)\right)=f\left(g\left(z\right)\right),\quad\forall\,f,\,g\,\in\,\mathbb{K}^{\mathbb{K}}.\label{Composer}
\end{equation}

Three types of systems of equations form the basis for algebraic constructs.
The first type corresponds to systems of linear equations obtained
by setting the combinator to
\[
\begin{array}{c}
\\
\text{Op}\\
^{0\le\textcolor{red}{j}<k}
\end{array}\,:=\sum_{0\le\textcolor{red}{j}<k}.
\]
Consequently, the product of conformable constructs
\[
\mathbf{A}\left(z\right)\in\left(\mathbb{C}^{\mathbb{C}}\right)^{n_{0}\times\ell}\quad\text{ and }\quad\mathbf{B}\left(z\right)\in\left(\mathbb{C}^{\mathbb{C}}\right)^{\ell\times n_{1}},
\]
whose entries are given by 
\[
\begin{array}{cc}
\mathbf{A}\left(z\right)\left[i_{0},t\right]= & a_{i_{0}\,t}\,z+0\,z^{0}\\
\\
\mathbf{B}\left(z\right)\left[t,i_{1}\right]= & 0\,z+b_{t\,i_{1}}\,z^{0}
\end{array},\;\forall\:\begin{cases}
\begin{array}{cc}
0\le i_{0}<n_{0}\\
0\le\,t<\,\ell\\
0\le i_{1}<n_{1}
\end{array}\end{cases}
\]
recovers the usual matrix product as
\[
\mbox{CProd}_{\sum}\left(\mathbf{A}\left(z\right),\mathbf{B}\left(z\right)\right)=\mathbf{A}\left(1\right)\cdot\mathbf{B}\left(z\right).
\]
For instance, let 
\[
\mathbf{A}\left(z\right)=\left(\begin{array}{ccc}
a_{00}\,z+0\,z^{0} &  & a_{01}\,z+0\,z^{0}\\
\\
a_{10}\,z+0\,z^{0} &  & a_{11}\,z+0\,z^{0}
\end{array}\right)\ \text{ and }\ \mathbf{B}\left(z\right)=\left(\begin{array}{ccc}
0\,z+b_{00}\,z^{0} &  & 0\,z+b_{01}\,z^{0}\\
\\
0\,z+b_{10}\,z^{0} &  & 0\,z+b_{11}\,z^{0}
\end{array}\right),
\]
whose trivial terms $0\,z$ and $0\,z^{0}$ are meant to emphasize
the fact that every entry ( including the constant entries ) are to
be viewed as polynomials in the morphism variable $z$. In particular
\[
\mbox{CProd}_{\sum}\left(\mathbf{A}\left(z\right),\mathbf{B}\left(z\right)\right)=\left(\begin{array}{rcr}
a_{00}b_{00}+a_{01}b_{10} &  & a_{00}b_{01}+a_{01}b_{11}\\
\\
a_{10}b_{00}+a_{11}b_{10} &  & a_{10}b_{01}+a_{11}b_{11}
\end{array}\right)=\mathbf{A}\left(1\right)\cdot\mathbf{B}\left(z\right).
\]
As a result, for all $\mathbf{A}\left(z\right)\in\left(\mathbb{C}^{\mathbb{C}}\right)^{n\times n}$,
the identity construct for CProd$_{\Sigma}$ is the construct $z\mathbf{I}_{n}$
prescribed by
\[
\mbox{CProd}_{\sum}\left(\mathbf{A}\left(z\right)-\mathbf{A}\left(0\right),\mathbf{I}\left(z\right)\right)=\mathbf{A}\left(z\right)-\mathbf{A}\left(0\right)=\mbox{CProd}_{\sum}\left(\mathbf{I}\left(z\right),\mathbf{A}\left(z\right)-\mathbf{A}\left(0\right)\right),
\]
 For example when $n=2$ the identity construct is 
\[
\mathbf{I}\left(z\right)=z\mathbf{I}_{2}=\left(\begin{array}{cc}
z+0\,z^{0} & 0\,z+0z^{0}\\
0\,z+0z^{0} & z+0z^{0}
\end{array}\right).
\]
Let $\mathbf{A}\left(z\right)\in\left(\mathbb{C}^{\mathbb{C}}\right)^{2\times2}$
be given by 
\[
\mathbf{A}\left(z\right)=\left(\begin{array}{ccc}
a_{00}\,z+0\,z^{0} &  & a_{01}\,z+0\,z^{0}\\
\\
a_{10}\,z+0\,z^{0} &  & a_{11}\,z+0\,z^{0}
\end{array}\right),
\]
then
\[
\mbox{CProd}_{\sum}\left(\mathbf{A}\left(z\right),\mathbf{I}\left(z\right)\right)=\mathbf{A}\left(z\right)=\mbox{CProd}_{\sum}\left(\mathbf{I}\left(z\right),\mathbf{A}\left(z\right)\right).
\]
The inverse of a second order construct is illustrated by the equalities
\[
\mbox{CProd}_{\sum}\left\{ \left(\begin{array}{rcr}
a_{00}\,z+0z^{0} &  & a_{01}\,z+0\,z^{0}\\
\\
a_{10}\,z+0z^{0} &  & a_{11}\,z+0\,z^{0}
\end{array}\right),\,\left(\begin{array}{rcr}
\frac{-a_{11}z+0z^{0}}{a_{01}a_{10}-a_{00}a_{11}} &  & \frac{a_{01}z+0z^{0}}{a_{01}a_{10}-a_{00}a_{11}}\\
\\
\frac{a_{10}z+0z^{0}}{a_{01}a_{10}-a_{00}a_{11}} &  & \frac{-a_{00}z+0z^{0}}{a_{01}a_{10}-a_{00}a_{11}}
\end{array}\right)\right\} =\mathbf{I}\left(z\right),
\]
\[
\text{and}
\]
\[
\mbox{CProd}_{\sum}\left\{ \left(\begin{array}{rcr}
\frac{-a_{11}z+0z^{0}}{a_{01}a_{10}-a_{00}a_{11}} &  & \frac{a_{01}z+0z^{0}}{a_{01}a_{10}-a_{00}a_{11}}\\
\\
\frac{a_{10}z+0z^{0}}{a_{01}a_{10}-a_{00}a_{11}} &  & \frac{-a_{00}z+0z^{0}}{a_{01}a_{10}-a_{00}a_{11}}
\end{array}\right),\,\left(\begin{array}{rcr}
a_{00}z+0z^{0} &  & a_{01}z+0z^{0}\\
\\
a_{10}z+0z^{0} &  & a_{11}z+0z^{0}
\end{array}\right)\right\} =\mathbf{I}\left(z\right).
\]

CProd$_{\Sigma}$ canonically expresses a system of linear equations
as 
\[
\mathbf{0}_{m\times1}=\mbox{CProd}_{\sum}\left(\mathbf{A}\left(z\right),\mathbf{x}\left(z\right)\right),
\]
where
\[
\mathbf{A}\left(z\right)\left[i,j\right]=a_{ij}\,z-\frac{b_{i}}{n}\ \text{ and }\ \mathbf{x}\left(z\right)\left[j\right]=0\,z+x_{j}\,z^{0}\quad\forall\,\begin{cases}
\begin{array}{c}
0\le i<m\\
0\le j<n
\end{array}\end{cases}.
\]
$\mathbf{A}\left(z\right)\in\left(\mathbb{C}^{\mathbb{C}}\right)^{m\times n}$
and $\mathbf{x}\left(z\right)\in\left(\mathbb{C}^{\mathbb{C}}\right)^{n\times1}$
respectively denote coefficient and variable constructs of the system.
For instance a system of two equations in the unknowns $x_{0}$, $x_{1}$
and $x_{2}$ is expressed in terms of coefficient and variable constructs
\[
\mathbf{A}\left(z\right)=\left(\begin{array}{rcrcr}
a_{00}z-\frac{b_{0}}{3} &  & a_{01}z-\frac{b_{0}}{3} &  & a_{02}z-\frac{b_{0}}{3}\\
\\
a_{10}z-\frac{b_{1}}{3} &  & a_{11}z-\frac{b_{1}}{3} &  & a_{12}z-\frac{b_{1}}{3}
\end{array}\right),\quad\mathbf{x}\left(z\right)=\left(\begin{array}{c}
0\,z+x_{0}z^{0}\\
\\
0\,z+x_{1}z^{0}\\
\\
0\,z+x_{2}z^{0}
\end{array}\right),
\]
\[
\text{as}
\]
\[
\mathbf{0}_{2\times1}=\mbox{CProd}_{\sum}\left(\mathbf{A}\left(z\right),\mathbf{x}\left(z\right)\right)=\left(\begin{array}{r}
0\,z+\left(a_{00}x_{0}+a_{01}x_{1}+a_{02}x_{2}-b_{0}\right)z^{0}\\
\\
0\,z+\left(a_{10}x_{0}+a_{11}x_{1}+a_{12}x_{2}-b_{1}\right)z^{0}
\end{array}\right).
\]
The corresponding SageMath code setup is as follows\\
\begin{sageverbatim}

sage: # Loading the Hypermatrix Algebra Package into SageMath
sage: load('./Hypermatrix_Algebra_Package_code.sage')
sage: 
sage: # Initialization of the morphism variable
sage: z=var('z')
sage: 
sage: # Initialization of the order and size parameters
sage: od=2; sz=2
sage: 
sage: # Initialization of the constructs
sage: M0=z*HM(sz, sz, 'a') + HM(sz, sz, 'b')
sage: M0
[[a00*z + b00, a01*z + b01], [a10*z + b10, a11*z + b11]]
sage: M1=z*HM(sz, sz, 'c') + HM(sz, sz, 'd')
sage: M1
[[c00*z + d00, c01*z + d01], [c10*z + d10, c11*z + d11]]
sage: 
sage: # Computing the products
sage: # The product of construct where the composer is
sage: # set by default to the composition of functions
sage: # is implemented as GProd instead of CProd
sage: # and it is used as follows.
sage: M2=GProd([M0, M1], sum, [z]).expand()
sage: M3=GProd([M1, M0], sum, [z]).expand()
sage: 
sage: # Initializing the right identity construct
sage: rId=z*HM(od,sz,'kronecker') - i2x2(HM(sz,sz,'a'))*HM([[b01,b00],[b11,b10]])
sage: 
sage: # Initializing the left identity construct
sage: lId=z*HM(od, sz, 'kronecker')
sage: lId
[[z, 0], [0, z]]
sage: # Computing the product with the right identity
sage: M4=GProd([M0, rId], sum, [z]).factor()
sage: M4
[[a00*z + b00, a01*z + b01], [a10*z + b10, a11*z + b11]]
sage: # Computing the product with the left identity
sage: M5=GProd([lId, M1], sum, [z])
sage: M5
[[c00*z + d00, c01*z + d01], [c10*z + d10, c11*z + d11]]
\end{sageverbatim}\\
The code setup above initializes the constructs\begin{sagesilent}

# Loading the Hypermatrix Algebra Package into SageMath
load('./Hypermatrix_Algebra_Package_code.sage')

# Initialization of the morphism variable
z=var('z')

# Initialization of the order and size parameters
od=2; sz=2

# Initialization of the constructs
M0=z*HM(sz, sz, 'a') + HM(sz, sz, 'b')
M1=z*HM(sz, sz, 'c') + HM(sz, sz, 'd')

# Computing the products
M2=GProd([M0, M1], sum, [z]).expand()
M3=GProd([M1, M0], sum, [z]).expand()

# Initializing the right identity construct
rId=z*HM(od,sz,'kronecker') - i2x2(HM(sz,sz,'a'))*HM([[b01,b00],[b11,b10]])

# Initializing the left identity construct
lId=z*HM(od, sz, 'kronecker')

# Computing the product with the right identity
M4=GProd([M0, rId], sum, [z]).factor()

# Computing the product with the left identity
M5=GProd([lId, M1], sum, [z])

\end{sagesilent}
\[
\mathbf{M}_{0}\left(z\right)=\sage{M0.matrix()},\:\mathbf{M}_{1}\left(z\right)=\sage{M1.matrix()},
\]
\[
\mathbf{lId}\left(z\right)=\sage{lId.matrix()},
\]
\[
\text{and}
\]
\[
\mathbf{rId}\left(z\right)=\sage{rId.matrix()}.
\]
The code setup also computes the products
\[
\text{GProd}_{\sum}\left(\mathbf{M}_{0}\left(z\right),\mathbf{M}_{1}\left(z\right)\right)=
\]
\[
\sage{M2.matrix()},
\]
\[
\text{GProd}_{\sum}\left(\mathbf{M}_{1}\left(z\right),\mathbf{M}_{0}\left(z\right)\right)=
\]
\[
\sage{M3.matrix()}.
\]
Finally the code setup illustrates subtle differences between left
and right identity elements via products
\[
\text{CProd}_{\sum}\left(\mathbf{M}_{0}\left(z\right),\mathbf{rId}\left(z\right)\right)=\sage{M4.matrix()},
\]
\[
\text{CProd}_{\sum}\left(\mathbf{lId}\left(z\right),\mathbf{M}_{1}\left(z\right)\right)=\sage{M5.matrix()}.
\]
These examples establish that even in the simple setting of constructs
from $\left(\mathbb{C}^{\mathbb{C}}\right)^{2\times2}$ whose entries
are polynomials of degree at most one in the morphism variable $z$,
the left identity elements may differ from the right identity elements.
Moreover, right identity elements may differ for distinct elements
of $\left(\mathbb{C}^{\mathbb{C}}\right)^{2\times2}$. Methods for
solving system of linear equations are easily adapted to the construct
formulation.\\
As an illustration let $\mathbf{A}\left(z\right)\in\left(\mathbb{K}^{\mathbb{K}}\right)^{2\times2}$
denote a coefficient construct and $\mathbf{x}\left(z\right)\in\left(\mathbb{K}^{\mathbb{K}}\right)^{2\times1}$
a variable construct defined over some skew field $\mathbb{K}$ such
that
\begin{equation}
\mathbf{A}\left(z\right)=\left(\begin{array}{cc}
a_{00}z+\left(\frac{-c_{0}}{2}\right) & a_{01}z+\left(\frac{-c_{0}}{2}\right)\\
a_{10}z+\left(\frac{-c_{1}}{2}\right) & a_{11}z+\left(\frac{-c_{1}}{2}\right)
\end{array}\right),\ \mathbf{x}\left(z\right)=\left(\begin{array}{c}
x_{0}\\
x_{1}
\end{array}\right).
\end{equation}
The corresponding linear system is 
\begin{equation}
\mathbf{0}_{2\times1}=\mbox{CProd}_{\sum}\left(\mathbf{A}\left(z\right),\mathbf{x}\left(z\right)\right)\Leftrightarrow\begin{cases}
\begin{array}{cccccc}
0= & a_{00}x_{0} & + & a_{01}x_{1} & + & \left(-1\right)c_{0}\\
0= & a_{10}x_{0} & + & a_{11}x_{1} & + & \left(-1\right)c_{1}
\end{array}\end{cases}.\label{Example of left system}
\end{equation}
Using Gaussian elimination, solutions to such a system are expressed
via four row operations. The first three row operations put the system
in Row Echelon Form (REF) and the last row operation puts the system
in Reduced Row Echelon Form (RREF). \\
The first row operation is the row linear combination 
\begin{equation}
-a_{10}a_{00}^{-1}\text{R}_{0}+\text{R}_{1}\rightarrow\text{R}_{1}\label{Row linear combination}
\end{equation}
which yields
\begin{equation}
\begin{cases}
\begin{array}{cccccc}
0= & a_{00}x_{0} & + & a_{01}x_{1} & + & \left(-1\right)c_{0}\\
0= & \left(-a_{10}a_{00}^{-1}a_{00}+a_{10}\right)x_{0} & + & \left(-a_{10}a_{00}^{-1}a_{01}+a_{11}\right)x_{1} & + & \left(-1\right)\left(-a_{10}a_{00}^{-1}c_{0}+c_{1}\right)
\end{array}\end{cases}
\end{equation}
\begin{equation}
\implies\begin{cases}
\begin{array}{cccccc}
0= & a_{00}x_{0} & + & a_{01}x_{1} & + & \left(-1\right)c_{0}\\
0= & 0x_{0} & + & \left(-a_{10}a_{00}^{-1}a_{01}+a_{11}\right)x_{1} & + & \left(-1\right)\left(-a_{10}a_{00}^{-1}c_{0}+c_{1}\right)
\end{array}\end{cases}.\label{system prior to row scaling}
\end{equation}
The next two row operations are row scaling operations
\begin{equation}
\begin{array}{ccc}
a_{00}^{-1}\text{R}_{0} & \rightarrow & \text{R}_{0}\\
\left(-a_{10}a_{00}^{-1}a_{01}+a_{11}\right)^{-1}\text{R}_{1} & \rightarrow & \text{R}_{1}
\end{array}\label{Row scaling}
\end{equation}
which yield
\begin{equation}
\begin{cases}
\begin{array}{ccc}
0= & a_{00}^{-1}a_{00}x_{0}+a_{00}^{-1}a_{01}x_{1}+\left(-1\right)a_{00}^{-1}c_{0}\\
0= & 0x_{0}+\left(-a_{10}a_{00}^{-1}a_{01}+a_{11}\right)^{-1}\left(-a_{10}a_{00}^{-1}a_{01}+a_{11}\right)x_{1} & +\left(-1\right)\left(-a_{10}a_{00}^{-1}a_{01}+a_{11}\right)^{-1}\left(-a_{10}a_{00}^{-1}c_{0}+c_{1}\right)
\end{array} & .\end{cases}
\end{equation}
\begin{equation}
\implies\begin{cases}
\begin{array}{cccccc}
0= & x_{0} & + & a_{00}^{-1}a_{01}x_{1} & + & \left(-1\right)a_{00}^{-1}c_{0}\\
0= & 0x_{0} & + & x_{1} & + & \left(-1\right)\left(-a_{10}a_{00}^{-1}a_{01}+a_{11}\right)^{-1}\left(-a_{10}a_{00}^{-1}c_{0}+c_{1}\right)
\end{array} & .\end{cases}
\end{equation}
The row operations described in the illustration above which put the
system in REF can be performed directly on the constructs $\mathbf{A}\left(z\right)$
and $\mathbf{x}\left(z\right)$. More specifically the row linear
combination operation (\ref{Row linear combination}) changes $\mathbf{A}\left(z\right)$
to
\begin{equation}
\mathbf{A}_{0}\left(z\right)=\left(\begin{array}{ccc}
a_{00}z+\left(\frac{-c_{0}}{2}\right) &  & a_{01}z+\left(\frac{-c_{0}}{2}\right)\\
\\
0\,z+\left(\frac{-\left(-a_{10}a_{00}^{-1}c_{0}+c_{1}\right)}{2}\right) &  & \left(-a_{10}a_{00}^{-1}a_{01}+a_{11}\right)z+\left(\frac{-\left(-a_{10}a_{00}^{-1}c_{0}+c_{1}\right)}{2}\right)
\end{array}\right).
\end{equation}
On the other hand, the effect of row scaling operations are more easily
achieved by performing instead an invertible change of variable. For
instance, the effect of the row scaling operations (\ref{Row scaling})
are more easily obtained by performing the following change of variables
in (\ref{system prior to row scaling})
\begin{equation}
\begin{cases}
\begin{array}{ccc}
x_{0} & = & a_{00}^{-1}y_{0}\\
x_{1} & = & \left(-a_{10}a_{00}^{-1}a_{01}+a_{11}\right)^{-1}y_{1}
\end{array} & .\end{cases}
\end{equation}
\begin{equation}
\implies\mathbf{x}\left(z\right)=\left(\begin{array}{c}
a_{00}^{-1}\left(a_{00}x_{0}\right)\\
\left(-a_{10}a_{00}^{-1}a_{01}+a_{11}\right)^{-1}\left(\left(-a_{10}a_{00}^{-1}a_{01}+a_{11}\right)x_{1}\right)
\end{array}\right)=\left(\begin{array}{c}
a_{00}^{-1}y_{0}\\
\left(-a_{10}a_{00}^{-1}a_{01}+a_{11}\right)^{-1}y_{1}
\end{array}\right)=\mathbf{y}^{*}\left(z\right).
\end{equation}
The resulting system in the new variables $y_{0}$ and $y_{1}$ is
in REF and given by 
\begin{equation}
\mathbf{0}_{2\times1}=\mbox{CProd}_{\sum}\left(\mathbf{A}_{0}\left(z\right),\mathbf{y}^{\prime}\left(z\right)\right)
\end{equation}
which yields 
\begin{equation}
\begin{cases}
\begin{array}{ccccccc}
0 & = & y_{0} & + & a_{01}\left(-a_{10}a_{00}^{-1}a_{01}+a_{11}\right)^{-1}y_{1} & + & \left(-1\right)c_{0}\\
0 & = & 0y_{0} & + & y_{1} & + & \left(-1\right)\left(-a_{10}a_{00}^{-1}c_{0}+a_{00}c_{1}\right)
\end{array}\end{cases}.
\end{equation}
Finally, the RREF of the system in the new variables $y_{0}$ and
$y_{1}$ is obtained by performing the row linear combination operation
\[
-a_{01}\left(-a_{10}a_{00}^{-1}a_{01}+a_{11}\right)^{-1}\text{R}_{1}+\text{R}_{0}\rightarrow\text{R}_{0}
\]
which yields 
\begin{equation}
\begin{cases}
\begin{array}{ccccccc}
0 & = & y_{0} & + & 0y_{1} & + & \left(-1\right)\left(-a_{01}\left(-a_{10}a_{00}^{-1}a_{01}+a_{11}\right)^{-1}\left(-a_{10}a_{00}^{-1}c_{0}+a_{00}c_{1}\right)+c_{0}\right)\\
0 & = & 0y_{0} & + & y_{1} & + & \left(-1\right)\left(-a_{10}a_{00}^{-1}c_{0}+a_{00}c_{1}\right)
\end{array} & .\end{cases}
\end{equation}
Once all solutions in the new variables have been obtained, one derives
from such solutions the solutions in the original variables by successively
inverting the changes of variables. An important benefit of the construct
formulation of systems of linear equations over skew fields is the
fact it de-emphasize the distinctions between left system ( systems
where the coefficients multiply all variables on the left ), right
system ( systems where the coefficients multiply all the variables
on the right ) and mixed systems ( systems where the coefficients
may multiply variables both on the left and right ). For instance
the system in Eq. (\ref{Example of left system}) is a left system.
The right system analog of Eq. (\ref{Example of left system}) is
associated with coefficient and variable constructs $\mathbf{A}\left(z\right)\in\left(\mathbb{K}^{\mathbb{K}}\right)^{2\times2}$
and $\mathbf{x}\left(z\right)\in\left(\mathbb{K}^{\mathbb{K}}\right)^{2\times1}$
respectively such that 
\begin{equation}
\mathbf{A}\left(z\right)=\left(\begin{array}{cc}
zb_{00}+\left(\frac{-c_{0}}{2}\right) & zb_{10}+\left(\frac{-c_{0}}{2}\right)\\
zb_{01}+\left(\frac{-c_{1}}{2}\right) & zb_{11}+\left(\frac{-c_{1}}{2}\right)
\end{array}\right),\ \mathbf{x}\left(z\right)=\left(\begin{array}{c}
x_{0}\\
x_{1}
\end{array}\right).
\end{equation}
 The corresponding system is 
\begin{equation}
\mathbf{0}_{2\times1}=\mbox{CProd}_{\sum}\left(\mathbf{A}\left(z\right),\mathbf{x}\left(z\right)\right)\Leftrightarrow\begin{cases}
\begin{array}{ccccccc}
0 & = & x_{0}b_{00} & + & x_{1}b_{10} & + & \left(-1\right)c_{0}\\
0 & = & x_{0}b_{01} & + & x_{1}b_{11} & + & \left(-1\right)c_{1}
\end{array}\end{cases}.
\end{equation}
The associated constructs is put in REF via the row linear combination
operation
\begin{equation}
\mathbf{A}\left(z\right)\begin{array}{c}
\longrightarrow\\
_{_{-\text{R}_{0}b_{00}^{-1}b_{01}+\text{R}_{1}\rightarrow\text{R}_{1}}}
\end{array}\mathbf{A}_{0}\left(z\right)=\left(\begin{array}{ccc}
zb_{00}+\left(\frac{-c_{0}}{2}\right) &  & zb_{10}+\left(\frac{-c_{0}}{2}\right)\\
\\
z\,0+\left(\frac{-\left(c_{0}b_{00}^{-1}b_{01}+c_{1}\right)}{2}\right) &  & z\left(-b_{10}b_{00}^{-1}b_{01}+b_{11}\right)+\left(\frac{-\left(c_{0}b_{00}^{-1}b_{01}+c_{1}\right)}{2}\right)
\end{array}\right)
\end{equation}
followed by the change of variable which effect the scaling
\begin{equation}
\begin{cases}
\begin{array}{ccc}
x_{0} & = & y_{0}b_{00}^{-1}\\
x_{1} & = & y_{1}\left(-b_{10}b_{00}^{-1}b_{01}+b_{11}\right)^{-1}
\end{array} & ,\end{cases}
\end{equation}
\begin{equation}
\implies\mathbf{x}\left(z\right)=\left(\begin{array}{c}
\left(x_{0}b_{00}\right)b_{00}^{-1}\\
\left(x_{1}\left(-b_{10}b_{00}^{-1}b_{01}+b_{11}\right)\right)\left(-b_{10}b_{00}^{-1}b_{01}+b_{11}\right)^{-1}
\end{array}\right)=\left(\begin{array}{c}
y_{0}b_{00}^{-1}\\
y_{1}\left(-b_{10}b_{00}^{-1}b_{01}+b_{11}\right)^{-1}
\end{array}\right)=\mathbf{y}^{\prime}\left(z\right).
\end{equation}
Finally, the system is put in RREF via the row linear combination
operation 
\begin{equation}
\mathbf{A}_{0}\left(z\right)\begin{array}{c}
\longrightarrow\\
_{_{_{-\text{R}_{1}\left(\left(-b_{10}b_{00}^{-1}b_{01}+b_{11}\right)^{-1}b_{10}\right)+\text{R}_{0}\rightarrow\text{R}_{0}}}}
\end{array}\mathbf{A}_{1}\left(z\right)=\left(\begin{array}{ccc}
z+\left(\frac{-d_{0}}{2}\right) &  & z0+\left(\frac{-d_{0}}{2}\right)\\
\\
z\,0+\left(\frac{-d_{1}}{2}\right) &  & z+\left(\frac{-d_{1}}{2}\right)
\end{array}\right)
\end{equation}
where 
\begin{equation}
d_{0}=c_{0}-\left(c_{0}b_{00}^{-1}b_{01}+c_{1}\right)\left(\left(-b_{10}b_{00}^{-1}b_{01}+b_{11}\right)^{-1}b_{10}\right),\quad d_{1}=c_{0}b_{00}^{-1}b_{01}+c_{1}
\end{equation}
Note that both left and right systems are special cases of the more
general mixed linear systems of equation. It is well known that solutions
to left systems as well as to right systems are expressible as rational
functions of the coefficients of $z$ in $\mathbf{A}\left(z\right)$
\cite{GR} when they exist. More generally, a mixed linear system
of two equations in the two unknowns $x_{0}$ and $x_{1}$ is specified
with the coefficient and variable constructs respectively of the form
\begin{equation}
\mathbf{A}\left(z\right)=\left(\begin{array}{ccc}
a_{00}\,z\,b_{00}+\left(\frac{-c_{0}}{2}\right) &  & a_{01}\,z\,b_{10}+\left(\frac{-c_{0}}{2}\right)\\
\\
a_{10}\,z\,b_{01}+\left(\frac{-c_{1}}{2}\right) &  & a_{11}\,z\,b_{11}+\left(\frac{-c_{1}}{2}\right)
\end{array}\right)\:\text{ and }\:\mathbf{x}\left(z\right)=\left(\begin{array}{c}
x_{0}\\
x_{1}
\end{array}\right).
\end{equation}
The corresponding system is
\begin{equation}
\mathbf{0}_{m\times1}=\mbox{CProd}_{\sum}\left(\mathbf{A}\left(z\right),\mathbf{x}\left(z\right)\right).\label{System of equations}
\end{equation}
More explicitly expressed as 
\begin{equation}
\begin{cases}
\begin{array}{cccccc}
0= & a_{00}\,x_{0}\,b_{00} & + & a_{01}\,x_{1}\,b_{10} & + & \left(-1\right)c_{0}\\
0= & a_{10}\,x_{0}\,b_{01} & + & a_{11}\,x_{1}\,b_{11} & + & \left(-1\right)c_{1}
\end{array}\end{cases}.
\end{equation}
The following code snippets illustrates how to set up a mixed system
using SageMath\\
\begin{sageverbatim}
sage: # Loading the Hypermatrix package
sage: load('./Hypermatrix_Algebra_tst.sage')
sage: 
sage: # Initialization of the size parameter and the variables
sage: sz=2; l=2
sage: 
sage: # Initialization of the morphism variable
sage: z=var('z')
sage: 
sage: # Defining the list of variables
sage: La=HM(sz,l,'a').list(); Lb=HM(sz,l,'b').list()
sage: Lc=var_list('c',sz); Lx=var_list('x',l)
sage: 
sage: # Initialization of the Free algebra
sage: F=FreeAlgebra(QQ,len(La+Lx+Lb+Lc+[z]),La+Lx+Lb+Lc+[z])
sage: F.<a00,a10,a01,a11,x0,x1,b00,b10,b01,b11,c0,c1,z>=\
....: FreeAlgebra(QQ,len(La+Lx+Lb+Lc+[z]))
sage: 
sage: # Initialization of some temporary matrices used to initialize the constructs
sage: Ha=HM(sz,l,[a00, a10, a01, a11])
sage: Hb=HM(sz,l,[b00, b10, b01, b11]).transpose()
sage: 
sage: # Initialization of the constructs
sage: A=Ha.elementwise_product(z*Hb)-HM(sz,1,[c0,c1])*HM(1,l,[QQ(1/2) for i in rg(l)])
sage: A.printHM()
[:, :]=
[-1/2*c0 + a00*z*b00 -1/2*c0 + a01*z*b10]
[-1/2*c1 + a10*z*b01 -1/2*c1 + a11*z*b11]
sage: X=HM(l,1,[x0,x1])
sage: X.printHM()
[:, :]=
[x0]
[x1]
sage: # Computing the product
sage: M=GeneralHypermatrixProductIV([A, X], sum, [z])
sage: M.printHM()
[:, :]=
[-c0 + a00*x0*b00 + a01*x1*b10]
[-c1 + a10*x0*b01 + a11*x1*b11]
\end{sageverbatim}\\
Running the code above initializes the constructs\begin{sagesilent}

# Loading the Hypermatrix package
load('./Hypermatrix_Algebra_tst.sage')

# Initialization of the size parameter and the variables
sz=2; l=2

# Initialization of the morphism variable
z=var('z')

# Defining the list of variables
La=HM(sz,l,'a').list(); Lb=HM(sz,l,'b').list()
Lc=var_list('c',sz); Lx=var_list('x',l)

# Initialization of the Free algebra
F=FreeAlgebra(QQ,len(La+Lx+Lb+Lc+[z]),La+Lx+Lb+Lc+[z])
F.<a00,a10,a01,a11,x0,x1,b00,b10,b01,b11,c0,c1,z>=FreeAlgebra(QQ,len(La+Lx+Lb+Lc+[z]))

# Initialization of some temporary matrices used to initialize the constructs
Ha=HM(sz,l,[a00, a10, a01, a11])
Hb=HM(sz,l,[b00, b10, b01, b11]).transpose()

# Initialization of the constructs
A=Ha.elementwise_product(z*Hb)-HM(sz,1,[c0,c1])*HM(1,l,[QQ(1/2) for i in rg(l)])
X=HM(l,1,[x0,x1])

# Computing the product
M=GeneralHypermatrixProductIV([A, X], sum, [z])

\end{sagesilent}
\begin{equation}
\mathbf{A}\left(z\right)=\sage{A.matrix()},\quad\mathbf{x}\left(z\right)=\sage{X.matrix()}
\end{equation}
and the constraint
\begin{equation}
\mathbf{0}_{2\times1}=\mbox{CProd}_{\sum}\left(\mathbf{A}\left(z\right),\mathbf{x}\left(z\right)\right)
\end{equation}
as
\begin{equation}
\left(\begin{array}{c}
0\\
0
\end{array}\right)=\sage{M.matrix()}.
\end{equation}
In particular a sufficient condition for the expressibility of a unique
solution as rational functions of the coefficients of $z$ in $\mathbf{A}\left(z\right)$
is as follows.
\begin{prop}
Let $\mathbf{A}\left(z\right)\in\left(\mathbb{K}^{\mathbb{K}}\right)^{m\times n}$
and $\mathbf{x}\left(z\right)\in\left(\mathbb{K}^{\mathbb{K}}\right)^{n\times1}$
respectively denote the coefficient and variable constructs given
by
\[
\mathbf{A}\left(z\right)\left[i,j\right]=a_{ij}\,z\,b_{ji}+\left(\frac{-c_{i}}{n}\right),\;\mathbf{x}\left(z\right)\left[j\right]=x_{j},\,\forall\:\begin{cases}
\begin{array}{c}
0\le i<m\\
0\le j<n
\end{array}\end{cases}
\]
The entries of a solution to
\[
\mathbf{0}_{m\times1}=\mbox{CProd}_{\sum}\left(\mathbf{A}\left(z\right),\mathbf{x}\left(z\right)\right),
\]
are expressible as as rational functions of the coefficients $\mathbf{A}\left(z\right)$
if it results from an arbitrary but finite composition of left and
right invertible constructs.
\end{prop}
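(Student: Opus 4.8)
The plan is to recast the proposition as a statement about Gaussian elimination over the skew field $\mathbb{K}$, phrased entirely in the language of constructs. First I would unfold the product $\text{CProd}_{\sum}\left(\mathbf{A}\left(z\right),\mathbf{x}\left(z\right)\right)$ using the composer fixed in Eq. (\ref{Composer}): since each entry $\mathbf{A}\left(z\right)\left[i,j\right]=a_{ij}\,z\,b_{ji}-\frac{c_{i}}{n}$ is affine in $z$ while each $\mathbf{x}\left(z\right)\left[j\right]=x_{j}$ is constant, composition substitutes $x_{j}$ for the morphism variable and the combinator $\sum$ accumulates the $n$ copies of $-\frac{c_{i}}{n}$ into $-c_{i}$, so the product evaluates row by row to the mixed linear system $\sum_{0\le j<n}a_{ij}\,x_{j}\,b_{ji}-c_{i}=0$. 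Thus the construct formulation is literally equivalent to the explicit system, and it remains only to track the rationality of the coefficients through the elimination process.

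The core of the argument is an induction on the number of factors in the given finite composition of left and right invertible constructs. For the base case I would appeal to the preceding illustration, which shows that each elementary transformation of the construct version of Gaussian elimination is realized by $\text{CProd}_{\sum}$-composition with an invertible construct: a row linear combination such as (\ref{Row linear combination}) is a left composition, whereas the scalings in (\ref{Row scaling}) are effected by an invertible change of variable, that is, a right composition acting on $\mathbf{x}\left(z\right)$. For the inductive step I would invoke the theory of quasideterminants of Gelfand and Retakh \cite{GR}: the inverse of an invertible construct over $\mathbb{K}$ has entries that are rational functions of its entries, so composing the current system with a left- or right-invertible construct replaces the coefficients $\left\{a_{ij},b_{ji},c_{i}\right\}$ by rational functions of themselves. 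Since the rational functions in these coefficients form a skew field closed under the four arithmetic operations, rationality is preserved at every step, and after finitely many steps the transformed coefficients still lie in this field.

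I would then close the argument by noting that a terminating composition drives the coefficient construct to the reduced row echelon identity $\mathbf{I}\left(z\right)$, at which point the variable construct reads off each $x_{j}$ directly as one of the transformed right-hand side entries, hence as a rational function of the original coefficients. Inverting the rational changes of variable introduced along the way returns the solution in the original unknowns without leaving the field of rational functions, which establishes the claim.

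The step I expect to be the main obstacle is the inductive preservation of rationality under non-commutative inversion. Over a commutative field one could simply quote Cramer's rule, but over a skew field the entries of an inverse are quasideterminantal expressions, and one must verify both that these expressions are genuinely rational, with no escape into transcendental operations, and that the class of rational functions is closed under the left and right multiplications generated by the composition. Making the relevant field of fractions precise and checking that left- and right-invertible constructs keep all intermediate quantities inside it is the delicate point on which the whole proof turns.
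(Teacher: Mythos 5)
Your proposal is correct and takes essentially the same approach as the paper: the paper's entire proof of this proposition is the single sentence ``The proof follows by Gaussian elimination,'' and your argument is exactly that elimination argument carried out in detail, with row linear combinations realized as left compositions, scalings as right-composed changes of variable, and rationality tracked through each step. Your write-up supplies the induction on the number of invertible factors and the closure-under-noncommutative-inversion justification (via quasideterminants) that the paper leaves entirely implicit.
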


\begin{proof}
The proof follows by Gaussian elimination.
\end{proof}
The following theorem addresses the solvability of such general mixed
linear systems over an arbitrary skew field $\mathbb{K}$.
\begin{thm}
Let Let $\mathbf{A}\left(z\right)\in\left(\mathbb{K}^{\mathbb{K}}\right)^{m\times n}$
and $\mathbf{x}\left(z\right)\in\left(\mathbb{K}^{\mathbb{K}}\right)^{n\times1}$
respectively denote the coefficient and variable constructs given
by
\begin{equation}
\mathbf{A}\left(z\right)\left[i,j\right]=a_{ij}\,z\,b_{ji}+\left(\frac{-c_{i}}{n}\right),\;\mathbf{x}\left(z\right)\left[j\right]=x_{j},\,\forall\:\begin{cases}
\begin{array}{c}
0\le i<m\\
0\le j<n
\end{array}\end{cases}\label{General linear system}
\end{equation}
The entries of the general solution to
\[
\mathbf{0}_{m\times1}=\mbox{CProd}_{\sum}\left(\mathbf{A}\left(z\right),\mathbf{x}\left(z\right)\right),
\]
cannot be expressed as rational functions of the coefficients of $z$
in the entries of $\mathbf{A}\left(z\right)$.
\end{thm}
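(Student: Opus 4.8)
The plan is to reduce the general mixed system to a single canonical two-sided scalar equation and then obstruct the rationality of its solution through the theory of non-commutative rational power series. First I would expand the product: since the composer is composition of morphisms and the combinator is the sum, the constraint $\mathbf{0}_{m\times1}=\mbox{CProd}_{\sum}\left(\mathbf{A}\left(z\right),\mathbf{x}\left(z\right)\right)$ unwinds row by row into the mixed linear system
\[
\sum_{0\le j<n}a_{ij}\,x_{j}\,b_{ji}=c_{i},\qquad 0\le i<m.
\]
For $n=1$ this reads $a_{00}x_{0}b_{00}=c_{0}$ with the rational solution $x_{0}=a_{00}^{-1}c_{0}b_{00}^{-1}$, so I would immediately restrict to $n\ge2$, where the genuinely two-sided character survives. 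Following the elimination already carried out in the preceding text, I would solve the first row for one variable (in the $2\times2$ model, $x_{0}=a_{00}^{-1}c_{0}b_{00}^{-1}-a_{00}^{-1}a_{01}x_{1}b_{10}b_{00}^{-1}$), substitute into the remaining rows, and iterate. Every such step is a left or right multiplication by an invertible pivot, hence rational; after clearing the one-sided part, the surviving obstruction is a single equation of the shape $\alpha x\beta+\gamma x\delta=d$ in one unknown, which I normalize by left-multiplying by $\alpha^{-1}$ and right-multiplying by $\beta^{-1}$ to the canonical form
\[
x+u\,x\,v=e,\qquad u=\alpha^{-1}\gamma,\;v=\delta\beta^{-1},\;e=\alpha^{-1}d\,\beta^{-1},
\]
with $u,v,e$ rational in the original $a_{ij},b_{ji},c_{i}$. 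Since all these reductions are reversible rational operations, the general solution is rational in the coefficients if and only if the solution of the canonical equation is rational in $u,v,e$, so it suffices to refute the latter. (This dovetails with the preceding Proposition: inverting $I+L_{u}R_{v}$ is precisely what cannot be realized as a finite composition of left- and right-invertible constructs.)

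Next I would exploit the unique formal solution of the canonical equation, obtained from the recursion $x=e-uxv$, namely
\[
x=\sum_{k\ge0}(-1)^{k}\,u^{k}\,e\,v^{k}.
\]
Reading $u,v,e$ as free non-commuting indeterminates, this is a formal power series over the free monoid they generate, supported exactly on the words $u^{k}e\,v^{k}$. The heart of the argument is that this series is not a non-commutative rational (recognizable) series, hence is not the power-series expansion of any rational function of $u,v,e$. I would invoke the Fliess--Sch\"utzenberger characterization: a non-commutative power series regular at the origin is rational if and only if its Hankel matrix — the matrix $H$ with rows and columns indexed by words and $H_{p,s}$ equal to the coefficient of the word $ps$ — has finite rank. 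For our series the submatrix with row indices $\{u^{k}:k\ge0\}$ and column indices $\{e\,v^{k'}:k'\ge0\}$ has $(k,k')$-entry equal to $(-1)^{k}\delta_{k,k'}$; it is an infinite diagonal matrix of units and so has infinite rank, whence the series is non-rational.

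To conclude, if some non-commutative rational expression $R(u,v,e)$ solved $x+uxv=e$ identically, then, because the canonical solution vanishes at the origin $u=v=e=0$, $R$ would be regular there, and its expansion would be a rational power series coinciding with $\sum_{k}(-1)^{k}u^{k}e\,v^{k}$ by uniqueness of the formal solution, contradicting the infinite Hankel rank just established; the same counting (non-regular) support defeats the multi-term residual equations $\sum_{k}\gamma_{k}x\delta_{k}=d$ arising for $n>2$. I expect the main obstacle to be the genericity issue: one must guarantee that the reduction lands on $u,v$ that are \emph{not} accidentally related, so that the universal computation governs the given skew field $\mathbb{K}$. I would handle this by running the entire argument over Cohn's free field on the symbols $a_{ij},b_{ji},c_{i}$, in which the reduced $u,v,e$ are themselves free; a rational identity failing over the free field fails over a generic specialization, which is exactly the assertion that the general solution cannot be written rationally. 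The only residual care is to check that every pivot inverted during elimination is nonzero in the free field, which holds since each is a nonzero word, so the reduction is legitimate.
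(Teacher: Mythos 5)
Your proposal funnels into the same bottleneck as the paper's own proof: your canonical equation $x+uxv=e$ is just the normalized Sylvester equation $ax+xb=c$ (take $u=a^{-1}$, $v=b$, $e=a^{-1}c$), which is exactly the specific mixed family the paper reduces to, and your free-field discussion plays the role of the paper's ``generic symbolic matrices.'' Where the two arguments genuinely diverge is at the decisive non-rationality step. The paper evaluates on generic matrices, writes the unique solution by Cramer's rule on the Kronecker system $\mathrm{id}\otimes a+b^{\top}\otimes\mathrm{id}$, and then flatly asserts that these determinant ratios are not non-commutative rational functions of the coefficients. You instead take the unique formal solution $\sum_{k\ge 0}(-1)^{k}u^{k}ev^{k}$ and invoke the Fliess--Sch\"utzenberger criterion: the Hankel submatrix with rows indexed by $u^{k}$ and columns by $ev^{k'}$ is an infinite invertible diagonal matrix, so the series has infinite Hankel rank and is not recognizable. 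That computation is correct, and it supplies precisely the obstruction the paper leaves as ``it is easy to see''; your route is therefore not only different but more substantive at the one point where substance is required. (Your hand-wave about the multi-term residual equations for $n>2$ is harmless, since, as both you and the paper observe, a single counterexample family suffices.)

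There is, however, one genuine gap: the bridge from ``no rational \emph{series} solves the equation'' to ``no rational \emph{function} solves it.'' You claim that a rational solution $R$ ``would be regular at the origin because the canonical solution vanishes there.'' This conflates the non-commutative rational function with the function it computes. Regularity of $R$ at $0$ means some representing expression can be evaluated at the zero tuple; it does not follow from the fact that the unique solution $R$ agrees with happens to extend to $0$ --- non-commutative rational functions such as $(uv-vu)^{-1}$ have domains containing no scalar points whatsoever, so domains and points of analytic extension are not interchangeable in the free setting the way they are commutatively. Without regularity at $0$, $R$ has no power-series expansion at the origin, and your Hankel argument never engages it. The fix requires an actual lemma: for instance, show that any free-field solution must be additive and central-linear in $e$, hence of the form $\sum_{i\le r}p_{i}(u,v)\,e\,q_{i}(u,v)$ with $r$ finite (a directional-derivative or realization-theoretic argument), and then run your independence count directly inside the free field, where the identity $\sum_{i}p_{i}eq_{i}=e-\sum_{i}(up_{i})e(q_{i}v)$ forces infinitely many independent pairs $(p_{i},q_{i})$, contradicting $r<\infty$; alternatively, use realization theory centered at a matrix point of the domain of $R$. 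As written, your proof stops one lemma short of complete --- though it is worth noting that this is the very lemma the paper's own proof also omits.
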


\begin{proof}
Our proof will be set over the skew field of generic symbolic $m\times m$
( left skew field ) and $n\times n$ ( right skew field ) matrices.
It suffices to prove the claim for a specific family of mixed systems.
Note that Sylvester's equation can be expressed in term of coefficient
and variable constructs respectively given by 
\begin{equation}
\mathbf{A}\left(z\right)=\left(\begin{array}{ccc}
a\,z\,\text{id}_{n}+\left(-\frac{c}{2}\right) &  & \text{id}_{m}\,z\,b+\left(-\frac{c}{2}\right)\\
\\
\text{id}_{m}\,z\,\text{id}_{n}+\left(-\frac{0}{2}\right) &  & \left(-\text{id}_{m}\right)\,z\,\text{id}_{n}+\left(-\frac{0}{2}\right)
\end{array}\right)\:\text{ and }\:\mathbf{x}\left(z\right)=\left(\begin{array}{c}
x_{0}\\
x_{1}
\end{array}\right).
\end{equation}
where id$_{m}$ denotes the identity element of the left skew field
and id$_{n}$ denotes identity element of the right skew field. It
is well known that the system admits a unique solution iff 
\begin{equation}
\text{Resultant}_{x}\left\{ \det\left(x\,\text{id}_{m}-a\right),\,\det\left(x\,\text{id}_{n}+b\right)\right\} \ne0
\end{equation}
moreover exploiting the underlying matrix structure we know that 
\begin{equation}
m_{n\cdot s+t}\left[n\cdot i+j,\,n\cdot u+v\right]=\begin{cases}
\begin{array}{cc}
\left(\text{id}_{m}\otimes a+b^{\top}\otimes\text{id}_{n}\right)\left[n\cdot i+j,\,n\cdot u+v\right] & \text{ if }n\cdot u+v\ne n\cdot s+t\\
c\left[i,j\right] & \text{otherwise}
\end{array}\end{cases}
\end{equation}
and in particular the solution is expressed by 
\begin{equation}
x\left[s,t\right]=\frac{\det\left(m_{n\cdot s+t}\right)}{\det\left(\text{id}_{m}\otimes a+b^{\top}\otimes\text{id}_{n}\right)}.
\end{equation}
It is easy to see that such expressions are not expressible as non-commutative
rational functions of the coefficients $a$ and $b$, thereby concluding
our proof.
\end{proof}
We now proceed to illustrate starting from a mixed linear system in
Eq. (\ref{System of equations}) the steps used to determine formal
expression of solutions over an arbitrary skew field ( \ref{General linear system}
). The first row operation is 
\begin{equation}
\mathbf{A}\left(z\right)\begin{array}{c}
\longrightarrow\\
_{_{_{-a_{10}a_{00}^{-1}\text{R}_{0}b_{00}^{-1}b_{01}+\text{R}_{1}\rightarrow\text{R}_{1}}}}
\end{array}\mathbf{A}_{0}\left(z\right)
\end{equation}
yields
\begin{equation}
\mathbf{A}_{0}\left(z\right)=\left(\begin{array}{ccc}
\left(\mathbf{I}_{2}\otimes a_{00}\right)\,z\,\left(\mathbf{I}_{2}\otimes b_{00}\right)+\left(\frac{-\mathbf{I}_{2}\otimes c_{0}}{2}\right) &  & \left(\mathbf{I}_{2}\otimes a_{01}\right)\,z\,\left(\mathbf{I}_{2}\otimes b_{10}\right)+\left(\frac{-\mathbf{I}_{2}\otimes c_{0}}{2}\right)\\
\\
0_{2\times2}\,z\,0_{2\times2}-\frac{\left(f-a_{10}a_{00}^{-1}c_{0}b_{00}^{-1}b_{01}\right)\oplus\left(c_{1}-f\right)}{2} &  & \left(a_{10}a_{00}^{-1}\oplus a_{11}\right)\,z\,\left(b_{00}^{-1}b_{01}\oplus b_{11}\right)-\frac{\left(f-a_{10}a_{00}^{-1}c_{0}b_{00}^{-1}b_{01}\right)\oplus\left(c_{1}-f\right)}{2}
\end{array}\right)
\end{equation}
and 
\begin{equation}
\mathbf{x}\left(z\right)\rightarrow\left(\begin{array}{c}
\mathbf{I}_{2}\otimes x_{0}\\
\mathbf{I}_{2}\otimes x_{1}
\end{array}\right).
\end{equation}
\begin{equation}
\implies\mathbf{I}_{2}\otimes x_{1}=\left(a_{10}a_{00}^{-1}\oplus a_{11}\right)^{-1}\left(\frac{\left(f-a_{10}a_{00}^{-1}c_{0}b_{00}^{-1}b_{01}\right)\oplus\left(c_{1}-f\right)}{2}\right)\left(b_{00}^{-1}b_{01}\oplus b_{11}\right)^{-1}
\end{equation}
\begin{equation}
\implies f=a_{10}a_{00}^{-1}c_{0}b_{00}^{-1}b_{01}+a_{10}a_{00}^{-1}a_{11}^{-1}\left(c_{1}-f\right)b_{11}^{-1}b_{00}^{-1}b_{01}
\end{equation}
The equality expresses a rational series determined by the recurrence
\begin{equation}
f_{0}=f,\quad f_{k+1}=a_{10}a_{00}^{-1}c_{0}b_{00}^{-1}b_{01}+a_{10}a_{00}^{-1}a_{11}^{-1}\left(c_{1}-f_{k}\right)b_{11}^{-1}b_{00}^{-1}b_{01}
\end{equation}
As ansatz we assert that the free variable is a rational series expansion
of the coefficients of $\mathbf{A}\left(z\right)$ determined by the
limit
\[
\lim_{k\rightarrow\infty}f_{k}.
\]
This illustrates a non commutative version of the Lagrange inversion
formula. Having thus expressed $x_{1}$ in terms of the coefficients
of $\mathbf{A}\left(z\right)$ back substitution yields $x_{0}$.\\
\\
In summary, a system of equations of the first type is expressed in
terms of a coefficient construct $\mathbf{A}\left(z\right)\in\left(\mathbb{K}^{\mathbb{K}}\right)^{m\times n}$
and a variable construct $\mathbf{x}\left(z\right)$ of size $n\times1$
is given by 
\[
\mathbf{A}\left(z\right)\left[i,j\right]=a_{ij}\,z\,b_{ji}+\left(\frac{-c_{i}}{n}\right),\;\mathbf{x}\left(z\right)\left[j\right]=x_{j},\;\forall\:\begin{array}{c}
0\le i<m\\
0\le j<n
\end{array}.
\]
The corresponding system is 
\[
\mathbf{0}_{m\times1}=\mbox{CProd}_{\sum}\left(\mathbf{A}\left(z\right),\mathbf{x}\left(z\right)\right).
\]
The existence of solutions expressible as formal series of the coefficients
of $z$ in the construct $\mathbf{A}\left(z\right)$ is established
by combining the following three fundamental of row operations. The
first fundamental row operations are row linear combinations, specified
as follows 
\begin{equation}
\alpha\,\text{R}_{i}\,\beta+\text{R}_{j}\rightarrow\text{R}_{j},
\end{equation}
for some non-zero $\alpha,\beta\in\mathbb{K}$. The second fundamental
of row operations are row exchanges specified for $i\ne j$ by
\begin{equation}
\text{R}_{i}\leftrightarrow\text{R}_{j}.
\end{equation}
 Note that row linear combination and row exchanges affect only the
coefficient construct $\mathbf{A}\left(z\right)$. The third fundamental
row operations are row scaling which are achieved via variable change
in $\mathbf{x}\left(z\right)$ expressed by 
\begin{equation}
x_{i}=\alpha^{-1}\left(\alpha\,x_{i}\,\beta\right)\beta^{-1}=\alpha^{-1}y_{i}\beta^{-1}=y_{i}^{*}.
\end{equation}

The second type of system of equations corresponds to log-linear system
of equations expressed in terms of exponent and variable construct
respectively denoted $\mathbf{A}\left(z\right)\in\left(\mathbb{C}^{\mathbb{C}}\right)^{m\times n}$
and $\mathbf{x}\left(z\right)\in\left(\mathbb{C}^{\mathbb{C}}\right)^{n\times1}$
with entries given by
\begin{equation}
\mathbf{A}\left(z\right)\left[i,j\right]=\frac{z^{a_{ij}}}{\sqrt[n]{b_{i}}}\ \text{ and }\ \mathbf{x}\left(z\right)\left[j\right]=0\,z+x_{j}z^{0},\;\forall\,\begin{cases}
\begin{array}{c}
0\le i<m\\
0\le j<n
\end{array} & .\end{cases}
\end{equation}
The second type of systems of equations are obtained by setting the
\emph{combinator} to $\prod$ and are canonically prescribed by constraints
of the form
\begin{equation}
\mathbf{1}_{m\times1}=\mbox{CProd}_{\prod}\left(\mathbf{A}\left(z\right),\mathbf{x}\left(z\right)\right).
\end{equation}
 For example, let the exponent and variable constructs be given by
\begin{equation}
\mathbf{A}\left(z\right)=\left(\begin{array}{ccc}
\frac{z^{a_{00}}}{\sqrt{b_{0}}} &  & \frac{z^{a_{01}}}{\sqrt{b_{0}}}\\
\\
\frac{z^{a_{10}}}{\sqrt{b_{1}}} &  & \frac{z^{a_{11}}}{\sqrt{b_{1}}}
\end{array}\right)\ \text{ and }\ \mathbf{x}\left(z\right)=\left(\begin{array}{c}
0\,z+x_{0}z^{0}\\
\\
0\,z+x_{1}z^{0}
\end{array}\right).
\end{equation}
The corresponding \emph{system of second type} in the unknowns $x_{0}$,
$x_{1}$ expressed as 
\begin{equation}
\mathbf{1}_{2\times1}=\mbox{CProd}_{\prod}\left(\mathbf{A}\left(z\right),\mathbf{x}\left(z\right)\right)=\left(\begin{array}{r}
\frac{x_{0}^{a_{00}}\cdot x_{1}^{a_{01}}}{b_{0}}\\
\\
\frac{x_{0}^{a_{10}}\cdot x_{1}^{a_{11}}}{b_{1}}
\end{array}\right),
\end{equation}
where $\mathbf{A}\left(z\right)$ may be multivalued. Furthermore,
the entries of $\mathbf{x}$ need not be distinct variables. In particular
when entries of $\mathbf{x}$ lie in some non-Abelian group, systems
of equations of the second type describe instances of the word problem.
Note that special instances of the word problem are known to be undecidable.\\
\\
System of equations of the second type whose solutions lie $\mathbb{C}$
can be solved by elimination method very similar to Gaussian elimination.
Consider the system of two equations in the unknowns $x_{0}$, $x_{1}$
expressed in terms of exponent and variable construct
\begin{equation}
\mathbf{A}\left(z\right)=\left(\begin{array}{cc}
\frac{z^{a_{00}}}{\sqrt{b_{0}}} & \frac{z^{a_{01}}}{\sqrt{b_{0}}}\\
\frac{z^{a_{10}}}{\sqrt{b_{1}}} & \frac{z^{a_{11}}}{\sqrt{b_{1}}}
\end{array}\right),\ \mathbf{x}\left(z\right)=\left(\begin{array}{c}
x_{0}\\
x_{1}
\end{array}\right).
\end{equation}
given by 
\begin{equation}
\mathbf{1}_{2\times1}=\mbox{CProd}_{\prod}\left(\mathbf{A}\left(z\right),\mathbf{x}\left(z\right)\right)\Leftrightarrow\begin{cases}
\begin{array}{cccccc}
1= & x_{0}^{a_{00}} & \cdot & x_{1}^{a_{01}} & \cdot & b_{0}^{-1}\\
1= & x_{0}^{a_{10}} & \cdot & x_{1}^{a_{11}} & \cdot & b_{1}^{-1}
\end{array} & .\end{cases}\label{System of the second type}
\end{equation}
The system is put in REF via the row log-linear combination operation
\begin{equation}
\left(\text{R}_{0}^{a_{00}^{-1}}\right)^{-a_{10}}\text{R}_{1}\rightarrow\text{R}_{1}
\end{equation}
which yields 
\begin{equation}
\forall\:k\in\mathbb{Z},\quad\begin{cases}
\begin{array}{ccccccc}
1 & = & x_{0}^{a_{00}} & \cdot & x_{1}^{a_{01}} & \cdot & b_{0}^{-1}\\
1 & = & \left(x_{0}\right)^{\left(-a_{10}a_{00}^{-1}a_{00}+a_{10}\right)} & \cdot & \left(x_{1}\right)^{\left(-a_{10}a_{00}^{-1}a_{01}+a_{11}\right)} & \cdot & \left(\left(\left(b_{0}e^{i2\pi k}\right)^{a_{00}^{-1}}\right)^{-a_{10}}b_{1}\right)^{-1}
\end{array} & ,\end{cases}
\end{equation}
\begin{equation}
\implies\forall\:k\in\mathbb{Z},\quad\begin{cases}
\begin{array}{ccccccc}
1 & = & x_{0}^{a_{00}} & \cdot & x_{1}^{a_{01}} & \cdot & b_{0}^{-1}\\
1 & = & \left(x_{0}\right)^{0} & \cdot & \left(x_{1}\right)^{\left(-a_{10}a_{00}^{-1}a_{01}+a_{11}\right)} & \cdot & \left(\left(\left(b_{0}e^{i2\pi k}\right)^{a_{00}^{-1}}\right)^{-a_{10}}b_{1}\right)^{-1}
\end{array} & .\end{cases}
\end{equation}
The row log-linear combination operation therefore effects the change
\begin{equation}
\mathbf{A}\left(z\right)\begin{array}{c}
\longrightarrow\\
_{_{_{\left(\text{R}_{0}^{a_{00}^{-1}}\right)^{-a_{10}}\text{R}_{1}\rightarrow\text{R}_{1}}}}
\end{array}\mathbf{A}_{0}\left(z\right)=\left(\begin{array}{ccc}
\frac{z^{a_{00}}}{\sqrt{b_{0}}} &  & \frac{z^{a_{01}}}{\sqrt{b_{0}}}\\
\\
\frac{z^{0}}{\sqrt{\left(\left(b_{0}e^{i2\pi k}\right)^{a_{00}^{-1}}\right)^{-a_{10}}b_{1}}} &  & \frac{z^{\left(-a_{10}a_{00}^{-1}a_{01}+a_{11}\right)}}{\sqrt{\left(\left(b_{0}e^{i2\pi k}\right)^{a_{00}^{-1}}\right)^{-a_{10}}b_{1}}}
\end{array}\right),
\end{equation}
The following change of variable will allow us to transform the pivots
to 1. 
\begin{equation}
\mathbf{x}\left(z\right)=\left(\begin{array}{c}
\left(x_{0}^{a_{00}}\right)^{a_{00}^{-1}}\\
\left(x_{1}^{\left(-a_{10}a_{00}^{-1}a_{01}+a_{11}\right)}\right)^{\left(-a_{10}a_{00}^{-1}a_{01}+a_{11}\right)^{-1}}
\end{array}\right)=\left(\begin{array}{c}
y_{0}^{a_{00}^{-1}}\\
y_{1}^{\left(-a_{10}a_{00}^{-1}a_{01}+a_{11}\right)^{-1}}
\end{array}\right)=\mathbf{y}^{*}\left(z\right).
\end{equation}
The original system can thus be re-written as 
\begin{equation}
\mathbf{1}_{2\times1}=\mbox{CProd}_{\prod}\left(\mathbf{A}_{0}\left(z\right),\mathbf{y}^{*}\left(z\right)\right).
\end{equation}
Finally, the system is put in RREF via the row log-linear combination
operation 
\begin{equation}
\mathbf{A}_{0}\left(z\right)\begin{array}{c}
\longrightarrow\\
_{_{_{\text{R}_{1}^{-\left(\left(-a_{10}a_{00}^{-1}a_{01}+a_{11}\right)^{-1}a_{10}\right)}\cdot\text{R}_{0}\rightarrow\text{R}_{0}}}}
\end{array}\mathbf{A}_{1}\left(z\right)=\left(\begin{array}{rr}
\frac{z}{\sqrt{d_{0}}} & \frac{z^{0}}{\sqrt{d_{0}}}\\
\frac{z^{0}}{\sqrt{d_{1}}} & \frac{z^{1}}{\sqrt{d_{1}}}
\end{array}\right)
\end{equation}
where 
\begin{equation}
d_{0}=\left(\left(\left(\left(b_{0}e^{i2\pi k_{0}}\right)^{a_{00}^{-1}}\right)^{-a_{10}}b_{1}\right)^{-1}e^{i2\pi k_{1}}\right)^{-\left(-a_{10}a_{00}^{-1}a_{01}+a_{11}\right)^{-1}a_{01}}b_{0}^{-1},
\end{equation}
\begin{equation}
d_{1}=\left(\left(\left(b_{0}e^{i2\pi k_{0}}\right)^{a_{00}^{-1}}\right)^{-a_{10}}b_{1}\right)^{-1}
\end{equation}
The solution to the equations can be read from the RREF as $\left(\begin{array}{c}
y_{0}\\
y_{1}
\end{array}\right)=\left(\begin{array}{c}
d_{0}\\
d_{1}
\end{array}\right)$. The original unknown variables can be expressed as $\mathbf{x}\left(z\right)=\left(\begin{array}{c}
d_{0}^{a_{00}^{-1}}\\
d_{1}^{\left(-a_{10}a_{00}^{-1}a_{01}+a_{11}\right)^{-1}}
\end{array}\right)$ . More generally, a system of equations of the second type are canonically
expressed in terms of an exponent construct $\mathbf{A}\left(z\right)\in\left(\mathbb{C}^{\mathbb{C}}\right)^{m\times n}$
and a variable construct $\mathbf{x}\left(z\right)$ of size $n\times1$
with entries given by
\begin{equation}
\mathbf{A}\left(z\right)\left[i,j\right]=\frac{z^{a_{ij}}}{\sqrt[n]{b_{i}}},\;\mathbf{x}\left(z\right)\left[j\right]=0\,z+x_{j},\;\forall\:\begin{array}{c}
0\le i<m\\
0\le j<n
\end{array}.
\end{equation}
The corresponding system is of the form
\begin{equation}
\mathbf{1}_{m\times1}=\mbox{CProd}_{\prod}\left(\mathbf{A}\left(z\right),\mathbf{x}\left(z\right)\right).
\end{equation}
Solutions expressed as radical expressions of the coefficients of
$z$ in the construct $\mathbf{A}\left(z\right)$ are determined by
combining the following three fundamental row operations. The first
fundamental row operations are row log-linear combinations, specified
as follows 
\begin{equation}
\left(\text{R}_{i}\right)^{\alpha}\cdot\text{R}_{j}\rightarrow\text{R}_{j},
\end{equation}
for some non-zero $\alpha\in\mathbb{C}$. Note that row log-linear
combination may be multivalued. The second fundamental row operations
are row exchanges specified for $i\ne j$ by
\begin{equation}
\text{R}_{i}\leftrightarrow\text{R}_{j}.
\end{equation}
The third fundamental row operations are row log-scaling which are
obtained by variable change in $\mathbf{x}\left(z\right)$ of the
form 
\begin{equation}
x_{i}=\left(x_{i}^{\alpha}\right)^{\alpha^{-1}}=y_{i}^{\alpha^{-1}}=y_{i}^{*}.
\end{equation}
Finally, the third kind of system of equations is a variant of the
second kind in the sense that in both cases the combinator is set
to $\prod$. Systems of equations of the third kind are expressed
in terms are expressed in terms of a base exponent construct $\mathbf{A}\left(z\right)\in\left(\mathbb{C}^{\mathbb{C}}\right)^{m\times n}$
and a variable construct $\mathbf{x}\left(z\right)$ of size $n\times1$
whose entries are
\[
\mathbf{A}\left(z\right)\left[i,j\right]=\frac{a_{ij}^{z}}{\sqrt[n]{b_{i}}},\;\mathbf{x}\left(z\right)\left[j\right]=0z+x_{j},\;\forall\:\begin{array}{c}
0\le i<m\\
0\le j<n
\end{array}.
\]
The corresponding system is 
\begin{equation}
\mathbf{1}_{m\times1}=\mbox{CProd}_{\prod}\left(\mathbf{A}\left(z\right),\mathbf{x}\left(z\right)\right).
\end{equation}
Solutions of system of equations of the third kind are obtained by
combining the following three fundamental row operations used to derive
solutions to systems of equations of the second kind. We illustrate
the method of elimination on systems of the third type. Consider the
system of two equations in the unknowns $x_{0}$, $x_{1}$ expressed
in terms of 
\begin{equation}
\mathbf{A}\left(z\right)=\left(\begin{array}{cc}
\frac{a_{00}^{z}}{\sqrt{b_{0}}} & \frac{a_{01}^{z}}{\sqrt{b_{0}}}\\
\frac{a_{10}^{z}}{\sqrt{b_{1}}} & \frac{a_{11}^{z}}{\sqrt{b_{1}}}
\end{array}\right),\ \mathbf{x}\left(z\right)=\left(\begin{array}{c}
x_{0}\\
x_{1}
\end{array}\right),
\end{equation}
given by 
\begin{equation}
\mathbf{1}_{2\times1}=\mbox{CProd}_{\prod}\left(\mathbf{A}\left(z\right),\mathbf{B}\left(z\right)\right)
\end{equation}
more explicitly written as 
\begin{equation}
\begin{cases}
\begin{array}{ccccccc}
1 & = & a_{00}^{x_{0}} & \cdot & a_{01}^{x_{1}} & \cdot & b_{0}^{-1}\\
1 & = & a_{10}^{x_{0}} & \cdot & a_{11}^{x_{1}} & \cdot & b_{1}^{-1}
\end{array}\end{cases}.
\end{equation}
The row log-linear combination 
\begin{equation}
\text{R}_{0}^{-\frac{\ln a_{10}}{\ln a_{00}}}\cdot\text{R}_{1}\rightarrow\text{R}_{1}
\end{equation}
yields 
\begin{equation}
\forall\:k\in\mathbb{Z},\quad\begin{cases}
\begin{array}{ccccccc}
1 & = & a_{00}^{x_{0}} & \cdot & a_{01}^{x_{1}} & \cdot & b_{0}^{-1}\\
1 & = & \left(a_{00}^{-\frac{\ln a_{10}}{\ln a_{00}}}a_{10}\right)^{x_{0}} & \cdot & \left(a_{01}^{-\frac{\ln a_{10}}{\ln a_{00}}}a_{11}\right)^{x_{1}} & \cdot & \left(\left(b_{0}e^{i2\pi k}\right)^{-\frac{\ln a_{10}}{\ln a_{00}}}b_{1}\right)^{-1}
\end{array}\end{cases},
\end{equation}
\begin{equation}
\forall\:k\in\mathbb{Z},\quad\begin{cases}
\begin{array}{ccccccc}
1 & = & a_{00}^{x_{0}} & \cdot & a_{01}^{x_{1}} & \cdot & b_{0}^{-1}\\
1 & = & \left(a_{10}^{0}\right)^{x_{0}} & \cdot & \left(a_{01}^{-\frac{\ln a_{10}}{\ln a_{00}}}a_{11}\right)^{x_{1}} & \cdot & \left(\left(b_{0}e^{i2\pi k}\right)^{-\frac{\ln a_{10}}{\ln a_{00}}}b_{1}\right)^{-1}
\end{array}\end{cases}.
\end{equation}
The row log-linear combination operation therefore effects 
\begin{equation}
\mathbf{A}\left(z\right)\begin{array}{c}
\longrightarrow\\
_{_{_{\text{R}_{0}^{-\frac{\ln a_{10}}{\ln a_{00}}}\cdot\text{R}_{1}\rightarrow\text{R}_{1}}}}
\end{array}\mathbf{A}_{0}\left(z\right)=\left(\begin{array}{ccc}
\frac{a_{00}^{z}}{\sqrt{b_{0}}} &  & \frac{a_{01}^{z}}{\sqrt{b_{0}}}\\
\\
\frac{1^{z}}{\sqrt{\left(b_{0}e^{i2\pi k}\right)^{-\frac{\ln a_{10}}{\ln a_{00}}}b_{1}}} &  & \frac{\left(a_{01}^{-\frac{\ln a_{10}}{\ln a_{00}}}a_{11}\right)^{z}}{\sqrt{\left(b_{0}e^{i2\pi k}\right)^{-\frac{\ln a_{10}}{\ln a_{00}}}b_{1}}}
\end{array}\right).
\end{equation}
The following change of variable will allow us to transform the pivots
to 1. 
\begin{equation}
\mathbf{x}\left(z\right)=\left(\begin{array}{c}
\exp\left\{ x_{0}\left(\ln a_{00}\right)^{2}\frac{1}{\ln a_{00}}\right\} \\
\exp\left\{ x_{1}\ln\left(a_{01}^{-\frac{\ln a_{10}}{\ln a_{00}}}a_{11}\right)^{2}\frac{1}{\ln\left(a_{01}^{-\frac{\ln a_{10}}{\ln a_{00}}}a_{11}\right)}\right\} 
\end{array}\right)=\left(\begin{array}{c}
\exp\left\{ \frac{y_{0}}{\ln a_{00}}\right\} \\
\exp\left\{ \frac{y_{1}}{\ln\left(a_{01}^{-\frac{\ln a_{10}}{\ln a_{00}}}a_{11}\right)}\right\} 
\end{array}\right)=\mathbf{y}^{*}\left(z\right)
\end{equation}
The original system can thus be re-written as 
\begin{equation}
\mathbf{1}_{2\times1}=\mbox{CProd}_{\prod}\left(\mathbf{A}_{0}\left(z\right),\mathbf{y}^{*}\left(z\right)\right).
\end{equation}
The system is put in RREF via the row log-linear combination operation
\begin{equation}
\text{R}_{1}^{\frac{-\ln a_{01}}{\ln\left(a_{01}^{-\frac{\ln a_{10}}{\ln a_{00}}}a_{11}\right)}}\cdot\text{R}_{0}\rightarrow\text{R}_{0}
\end{equation}
\begin{equation}
\forall\:k_{0},k_{1}\in\mathbb{Z},\quad\begin{cases}
\begin{array}{ccccccc}
1 & = & e^{y_{0}} & \cdot & 1^{y_{1}} & \cdot & \left(\left(\left(b_{0}e^{i2\pi k_{0}}\right)^{-\frac{\ln a_{10}}{\ln a_{00}}}b_{1}e^{i2\pi k_{1}}\right)^{\frac{-\ln a_{01}}{\ln\left(a_{01}^{-\frac{\ln a_{10}}{\ln a_{00}}}a_{11}\right)}}b_{0}\right)^{-1}\\
1 & = & 1^{y_{0}} & \cdot & e^{y_{1}} & \cdot & \left(\left(b_{0}e^{i2\pi k_{0}}\right)^{-\frac{\ln a_{10}}{\ln a_{00}}}b_{1}\right)^{-1}
\end{array}\end{cases}.
\end{equation}
Equivalently
\begin{equation}
\mathbf{A}_{0}\left(z\right)\begin{array}{c}
\longrightarrow\\
_{_{_{\text{R}_{1}^{\frac{-\ln a_{01}}{\ln\left(a_{01}^{-\frac{\ln a_{10}}{\ln a_{00}}}a_{11}\right)}}\cdot\text{R}_{0}\rightarrow\text{R}_{0}}}}
\end{array}\mathbf{A}_{1}\left(z\right)=\left(\begin{array}{rr}
\frac{e^{z}}{\sqrt{d_{0}}} & \frac{e^{0z}}{\sqrt{d_{0}}}\\
\frac{e^{0z}}{\sqrt{d_{1}}} & \frac{e^{z}}{\sqrt{d_{1}}}
\end{array}\right)
\end{equation}
where 
\[
d_{0}=\left(\left(\left(b_{0}e^{i2\pi k_{0}}\right)^{-\frac{\ln a_{10}}{\ln a_{00}}}b_{1}e^{i2\pi k_{1}}\right)^{\frac{-\ln a_{01}}{\ln\left(a_{01}^{-\frac{\ln a_{10}}{\ln a_{00}}}a_{11}\right)}}b_{0}\right)^{-1},
\]
\[
d_{1}=\left(\left(b_{0}e^{i2\pi k}\right)^{-\frac{\ln a_{10}}{\ln a_{00}}}b_{1}\right)^{-1}.
\]
The solution to the equations can be read from the RREF as $\left(\begin{array}{c}
y_{0}\\
y_{1}
\end{array}\right)=\left(\begin{array}{c}
\text{ln}d_{0}\\
\text{ln}d_{1}
\end{array}\right)$. We omit here the cumbersome explicit expressions of entries $\mathbf{x}\left(z\right)$.
In summary, the differences between the types of systems of systems
is predicated their canonical formulation which expressed either in
terms of a coefficient, an exponent or a base constructs. All three
of which are respectively illustrated by $2\times2$ constructs
\begin{equation}
\begin{array}{ccc}
\mathbf{A}\left(z\right) & = & \left(\begin{array}{rr}
a_{00}z-\frac{b_{0}}{2} & a_{01}z-\frac{b_{0}}{2}\\
a_{10}z-\frac{b_{1}}{2} & a_{11}z-\frac{b_{1}}{2}
\end{array}\right),\\
\\
\mathbf{Ba}\left(z\right) & = & \left(\begin{array}{rr}
\frac{z^{a_{00}}}{\sqrt{b_{0}}} & \frac{z^{a_{01}}}{\sqrt{b_{0}}}\\
\frac{z^{a_{10}}}{\sqrt{b_{1}}} & \frac{z^{a_{11}}}{\sqrt{b_{1}}}
\end{array}\right),\\
 & \text{and}\\
\mathbf{Ca}\left(z\right) & = & \left(\begin{array}{rr}
\frac{a_{00}^{z}}{\sqrt{b_{0}}} & \frac{a_{01}^{z}}{\sqrt{b_{0}}}\\
\frac{a_{10}^{z}}{\sqrt{b_{1}}} & \frac{a_{11}^{z}}{\sqrt{b_{1}}}
\end{array}\right).
\end{array}.
\end{equation}
Each one is used to express a different types of systems of equations
as illustrated by the SageMath code below\\
\begin{sageverbatim}

sage: # Loading the Package into SageMath
sage: load('./Hypermatrix_Algebra_tst.sage')
sage: 
sage: # Initialization of the variables
sage: z=var('z')
sage: 
sage: # Initialization of the order and size parameter
sage: od=2; sz=2
sage: 
sage: # Initialization of the constructs
sage: Lb=var_list('b',sz)
sage: A=z*HM(sz,sz,'a')-HM(2,1,[Lb[0]/2,Lb[1]/2])*HM(1,2,'one')
sage: X=HM(sz,1,var_list('x',sz))
sage: 
sage: # Computing the product associated with systems of the first type
sage: C=GProd([A,X], sum, [z])
sage: C
[[a00*x0 + a01*x1 - b0], [a10*x0 + a11*x1 - b1]]
sage: # Initialization of the construct
sage: Ba=(HM(sz,sz,'a').elementwise_base_exponent(z)).elementwise_product(\
....: HM(2,1,[Lb[0]^(-1/2), Lb[1]^(-1/2)])*HM(1,2,'one'))
sage: Ba
[[z^a00/sqrt(b0), z^a01/sqrt(b0)], [z^a10/sqrt(b1), z^a11/sqrt(b1)]]
sage: # Computing the product associated with systems of the second type
sage: Bc=GProd([Ba,X], prod, [z])
sage: Bc
[[x0^a00*x1^a01/b0], [x0^a10*x1^a11/b1]]
sage: # Initialization of the construct
sage: Ca=(HM(sz,sz,'a').elementwise_exponent(z)).elementwise_product(\
....: HM(2,1,[Lb[0]^(-1/2), Lb[1]^(-1/2)])*HM(1,2,'one'))
sage: Ca
[[a00^z/sqrt(b0), a01^z/sqrt(b0)], [a10^z/sqrt(b1), a11^z/sqrt(b1)]]
sage: # Computing the product associated with systems of the third type
sage: Cc=GProd([Ca,X], prod, [z])
sage: Cc
[[a00^x0*a01^x1/b0], [a10^x0*a11^x1/b1]]
\end{sageverbatim}\\
An illustration for each one of the corresponding three types of systems
are\begin{sagesilent}

# Loading the Package
load('./Hypermatrix_Algebra_tst.sage')

# Initialization of the variables
z=var('z')

# Initialization of the order and size parameter
od=2; sz=2

# Initialization of the constructs
Lb=var_list('b',sz)
A=z*HM(sz,sz,'a')-HM(2,1,[Lb[0]/2,Lb[1]/2])*HM(1,2,'one')
X=HM(sz,1,var_list('x',sz))

# Computing the product associated with systems of the first type
C=GProd([A,X], sum, [z])

# Initialization of the construct
Ba=(HM(sz,sz,'a').elementwise_base_exponent(z)).elementwise_product(\
HM(2,1,[Lb[0]^(-1/2), Lb[1]^(-1/2)])*HM(1,2,'one'))

# Computing the product associated with systems of the second type
Bc=GProd([Ba,X], prod, [z])

# Initialization of the construct
Ca=(HM(sz,sz,'a').elementwise_exponent(z)).elementwise_product(\
HM(2,1,[Lb[0]^(-1/2), Lb[1]^(-1/2)])*HM(1,2,'one'))

# Computing the product associated with systems of the third type
Cc=GProd([Ca,X], prod, [z])

\end{sagesilent}
\begin{equation}
\begin{array}{ccc}
\mathbf{0}_{2\times1}=\text{CProd}_{\sum}\left(\mathbf{A}\left(z\right),\mathbf{x}\left(z\right)\right) & = & \sage{C.matrix()},\\
\\
\mathbf{1}_{2\times1}=\text{CProd}_{\prod}\left(\mathbf{Ba}\left(z\right),\mathbf{x}\left(z\right)\right) & = & \sage{Bc.matrix()}\\
\\
\mathbf{1}_{2\times1}=\text{CProd}_{\prod}\left(\mathbf{Ca}\left(z\right),\mathbf{x}\left(z\right)\right) & = & \sage{Cc.matrix()}.
\end{array},
\end{equation}
The variable construct across all three types is same and in our illustration
given by $\mathbf{x}\left(z\right)=\left(\begin{array}{r}
x_{0}\\
x_{1}
\end{array}\right).$ For a given system of one of the three types we illustrated by examples
how such a system is put in RREF. We now extend the notion of RREF
to arbitrary systems of equations. For this purpose we define the
degree matrix of a system. We express an arbitrary systems of equations
using constructs $\mathbf{A}\left(z\right)\in\left(\mathbb{K}^{\mathbb{K}}\right)^{m\times n}$
and $\mathbf{x}\left(z\right)$ as follows 
\begin{equation}
\mathbf{1}_{m\times1}=\mbox{GProd}_{\sum}\left(\mathbf{A}\left(z\right),\mathbf{x}\left(z\right)\right)
\end{equation}
The degree matrix of such a system noted $\mathcal{D}\left(\mathbf{A}\left(z\right)\right)$
is an $m\times n$ matrix whose entries are given by 
\begin{equation}
\mathcal{D}\left(\mathbf{A}\left(z\right)\right)\left[i,j\right]=\text{degree of }z\text{ in }\mathbf{A}\left(z\right)\left[i,j\right],\;\forall\:\begin{array}{c}
0\le i<m\\
0\le j<n
\end{array}.
\end{equation}
We say that a system is REF or RREF if its degree matrix is in REF
or RREF respectively. Elimination methods proceed by combining a pre-defined
set of fundamental row operation to put the system in RREF. It is
not uncommon to devise system of equation by composing other systems
of equations. For instance every system of algebraic equation is canonically
expressed as a composition of a system of the second and first kind
of the form
\begin{equation}
\mathbf{0}_{n\times1}=\text{CProd}_{\sum}\left(\mathbf{B}\left(z\right),\text{CProd}_{\prod}\left(\mathbf{A}\left(z\right),\mathbf{x}\left(z\right)\right)\right),
\end{equation}
where $\mathbf{A}\left(z\right)\in\left(\mathbb{C}^{\mathbb{C}}\right)^{m\times\ell}$,
$\mathbf{B}\left(z\right)\in\left(\mathbb{C}^{\mathbb{C}}\right)^{n\times m}$
and $\mathbf{x}\left(z\right)$ of size $\ell\times1$ denote respectively
an exponent, coefficient and variable constructs with entries given
by 
\begin{equation}
\mathbf{A}\left(z\right)\left[i,t\right]=\frac{z^{a_{it}}}{\sqrt[n]{b_{i}}},\;\mathbf{B}\left(z\right)\left[j,k\right]=\alpha_{ij}\,z-\frac{\beta_{i}}{n}\text{ and }\;\mathbf{x}\left(z\right)\left[t\right]=0\,z+x_{t},\;\forall\:\begin{array}{c}
0\le i,k<m\\
0\le t<\ell\\
0\le j<n
\end{array}.
\end{equation}
\[
,
\]
For notational convenience we express the canonical form of a system
of algebraic equation as 
\begin{equation}
\mathbf{0}_{n\times1}=\text{CProd}_{\sum}\left(\mathbf{B}\left(z\right),\text{CProd}_{\prod}\left(\mathbf{A}\left(z\right),\mathbf{x}\left(z\right)\right)\right)\Leftrightarrow\mathbf{0}_{n\times1}=\mathbf{C}\left(\mathbf{x}^{\mathbf{E}}\right)
\end{equation}
where $\mathbf{E}\in\mathbb{Z}^{m\times\ell}$ is called the exponent
matrix of the system of algebraic equation and $\mathbf{C}\in\mathbb{C}^{n\times m}$
is called the coefficient matrix of the system where 
\[
\mathbf{C}=\mathbf{B}\left(1\right).
\]

We conclude by discussing constraints of the form
\begin{equation}
\mathbf{1}_{m\times1}=\mbox{CProd}_{\prod}\left(\mathbf{A}\left(z\right),\mathbf{x}\left(z\right)\right),\quad\mathbf{x}\left(z\right)\left[j\right]=0\,z+x_{j},\;\forall\:0\le j<n
\end{equation}
where the construct $\mathbf{A}\left(z\right)\in\left(\mathbb{C}^{\mathbb{C}}\right)^{m\times n}$
whose are either given by 
\begin{equation}
\mathbf{A}\left(z\right)\left[i,j\right]=\frac{z^{a_{ij}}}{\sqrt[n]{b_{i}}},\;\forall\:\begin{array}{c}
0\le i<m\\
0\le j<n
\end{array},\text{ or alternatively }\mathbf{A}\left(z\right)\left[i,j\right]=\frac{a_{ij}^{z}}{\sqrt[n]{b_{i}}},\;\forall\:\begin{array}{c}
0\le i<m\\
0\le j<n
\end{array},
\end{equation}
( associated with systems of the second and third type respectively
) where $\left|\left\{ b_{i}\right\} _{0\le i<m}\cap\left\{ 0\right\} \right|\ge0$.
On the one hand, allowing for $\left|\left\{ b_{i}\right\} _{0\le i<m}\cap\left\{ 0\right\} \right|>0$
in systems of second type yields for a subset of the constrains of
the form 
\begin{equation}
\frac{0}{0}=\left(\prod_{0\le j<n}\left(\mathbf{x}\left[j\right]\right)^{\mathbf{A}\left[i,j\right]}\right)\cdot0^{-1}.
\end{equation}
Any such constraint is of course meaningless unless a factor of $\underset{0\le j<n}{\prod}\left(\mathbf{x}\left[j\right]\right)^{\mathbf{A}\left[i,j\right]}$
also equals zero. Choices among the $2^{n}$ possible factors to be
set to zero lead to a combinatorial branching of options which lies
at the heart of computational intractability. On the other hand, allowing
for $\left|\left\{ b_{i}\right\} _{0\le i<m}\cap\left\{ 0\right\} \right|>0$
in systems of third type leads to the non existence of bounded solutions
to such system.

\subsection{An interpolation perspective to systems of equations}

To motivate a perspective different from the Gaussian elimination
approach to the art of solving systems of equations of the first type
we briefly review the Lagrange interpolation construction which determines
the minimal degree polynomial $f\left(x\right)\in\mathbb{C}\left[x\right]$
subject to the constraints 
\[
S\,:=\left\{ \left(x=a_{i},\,f\left(a_{i}\right)=b_{i}\right)\,:\,i\in\left[0,n\right)\cap\mathbb{Z}\right\} ,
\]
where $\left|\left\{ a_{i}\,:\,i\in\left[0,n\right)\cap\mathbb{Z}\right\} \right|=n$.
\begin{equation}
f\left(x\right)=\sum_{i\in\left[0,n\right)\cap\mathbb{Z}}b_{i}\,\left(\prod_{\begin{array}{c}
0\le s<t<n\\
i\notin\left\{ s,t\right\} 
\end{array}}\left(a_{t}-a_{s}\right)\prod_{j\in\left[0,n\right)\cap\mathbb{Z}\backslash\left\{ i\right\} }\left(x-a_{j}\right)\right)\left(\prod_{0\le u<v<n}\left(a_{v}-a_{u}\right)\right)^{-1},
\end{equation}
which we rewrite as
\begin{equation}
f\left(x\right)=\sum_{i\in\left[0,n\right)\cap\mathbb{Z}}b_{i}\,\left(\prod_{j\in\left[0,n\right)\cap\mathbb{Z}\backslash\left\{ i\right\} }\left(x-a_{j}\right)\right)\left(\prod_{j\in\left[0,n\right)\cap\mathbb{Z}\backslash\left\{ i\right\} }\left(a_{i}-a_{j}\right)\right)^{-1}.
\end{equation}
It is well known that the solution to interpolation instances as specified
above reduces to solving a system of algebraic equations whose coefficient
matrix is a Vandermonde matrix. We argue here that in some sense the
converse also holds with some minor caveats. Solving systems of equations
of the first type is equivalent to extending Lagrange's polynomial
interpolating construction to vector inputs as follows 
\[
f\,:\,\mathbb{C}^{1\times n}\rightarrow\mathbb{C},
\]
such that 
\[
S\,:=\left\{ \left(\mathbf{x}=\mathbf{A}\left[i,:\right],\,f\left(\mathbf{A}\left[i,:\right]\right)=b_{i}\right)\,:\,i\in\left[0,n\right)\cap\mathbb{Z}\right\} ,
\]
where $\mathbf{A}\in$ GL$_{n}\left(\mathbb{C}\right)$. Modifying
accordingly Lagrange's interpolating construction, we write
\[
f\left(\mathbf{x}\right)=\sum_{i\in\left[0,n\right)\cap\mathbb{Z}}b_{i}\left(\left(\underset{\begin{array}{c}
0\le s<t<n\\
i\notin\left\{ s,t\right\} 
\end{array}}{\bigcirc}\left(\mathbf{A}\left[t,:\right]-\mathbf{A}\left[s,:\right]\right)\right)\circ\left(\underset{_{j\in\left[0,n\right)\cap\mathbb{Z}\backslash\left\{ i\right\} }}{\bigcirc}\left(\mathbf{x}-\mathbf{A}\left[j,:\right]\right)\right)\right)\times
\]
\[
\left(\text{diag}\left(\underset{_{0\le u<v<n}}{\bigcirc}\left(\mathbf{A}\left[v,:\right]-\mathbf{A}\left[u,:\right]\right)\right)^{+}\cdot\left(\frac{\mathbf{w}^{\circ^{0}}}{n}+\sum_{0<k<n}\gamma_{k}\,\mathbf{w}^{\circ^{k}}\right)\right)^{\top}
\]
which we rewrite as
\[
f\left(\mathbf{x}\right)=\sum_{i\in\left[0,n\right)\cap\mathbb{Z}}b_{i}\,\left(\underset{_{j\in\left[0,n\right)\cap\mathbb{Z}\backslash\left\{ i\right\} }}{\bigcirc}\left(\mathbf{x}-\mathbf{A}\left[j,:\right]\right)\right)\cdot\left(\text{diag}\left(\underset{_{j\in\left[0,n\right)\cap\mathbb{Z}\backslash\left\{ i\right\} }}{\bigcirc}\left(\mathbf{A}\left[i,:\right]-\mathbf{A}\left[j,:\right]\right)\right)^{+}\left(\frac{\mathbf{w}^{\circ^{0}}}{n}+\sum_{0<k<n}\gamma_{k}\,\mathbf{w}^{\circ^{k}}\right)\right)^{\top}
\]
In expanded form we have 
\begin{equation}
f\left(\mathbf{x}\right)=\sum_{0\le k<n}\left(\mathbf{x}^{\circ^{k}}\right)^{\top}\cdot\mathbf{c}_{k}\left(\gamma_{1},\cdots,\gamma_{n-1}\right).
\end{equation}
Thus far the interpolation construction did not use the fact that
$f\left(\mathbf{x}\right)$ is linear i.e. for all $c\in\mathbb{C}$
and $\mathbf{u},\mathbf{v}\in\mathbb{C}^{1\times n}$ we have 
\begin{equation}
\begin{array}{ccc}
f\left(c\,\mathbf{u}\right) & = & c\,f\left(\mathbf{u}\right)\\
f\left(\mathbf{u}+\mathbf{v}\right) & = & f\left(\mathbf{u}\right)+f\left(\mathbf{v}\right)
\end{array}.
\end{equation}
Consequently the solution is determined by solving a smaller system
of linear equation equation. 
\begin{equation}
f\left(\mathbf{x}\right)=\mathbf{b}\cdot\mathbf{v}\left(\mathbf{A},\mathbf{x},\gamma_{1},\cdots,\gamma_{n-1}\right)\mod\left\{ \mathbf{c}_{k}\left(\gamma_{1},\cdots,\gamma_{n-1}\right)\right\} _{1<k<n}
\end{equation}
where
\begin{equation}
\mathbf{v}\left(\mathbf{A},\mathbf{x},\gamma_{1},\cdots,\gamma_{n-1}\right)\left[i\right]=\left(\underset{_{0\le j\ne i<n}}{\bigcirc}\left(\mathbf{x}-\mathbf{A}\left[j,:\right]\right)\right)\cdot\left(\text{diag}\left(\underset{_{0\le j\ne i<n}}{\bigcirc}\left(\mathbf{A}\left[i,:\right]-\mathbf{A}\left[j,:\right]\right)\right)^{+}\left(\frac{\mathbf{w}^{\circ^{0}}}{n}+\sum_{0<k<n}\gamma_{k}\,\mathbf{w}^{\circ^{k}}\right)\right)^{\top}
\end{equation}
The case $n=2$ therefore provides us with a base case for the inductive
argument. Consider symbolic matrices,
\[
\mathbf{A}=\left(\begin{array}{rr}
a_{00} & a_{01}\\
a_{10} & a_{11}
\end{array}\right),\quad\mathbf{b}=\left(\begin{array}{r}
b_{0}\\
b_{1}
\end{array}\right).
\]
As prescribed by the construction above 
\[
f\,:\,\mathbb{C}^{1\times2}\rightarrow\mathbb{C},
\]
\begin{equation}
f\left(\mathbf{x}\right)=b_{1}\,\left(\mathbf{x}-\mathbf{A}\left[0,:\right]\right)\left(\begin{array}{c}
\frac{\nicefrac{1}{2}+\gamma}{a_{10}-a_{00}}\\
\frac{\nicefrac{1}{2}+\gamma}{a_{11}-a_{01}}
\end{array}\right)+b_{0}\,\left(\mathbf{x}-\mathbf{A}\left[1,:\right]\right)\left(\begin{array}{c}
\frac{\nicefrac{1}{2}+\gamma}{a_{00}-a_{10}}\\
\frac{\nicefrac{1}{2}+\gamma}{a_{01}-a_{11}}
\end{array}\right)
\end{equation}
\begin{equation}
\implies f\left(\mathbf{x}\right)=\mathbf{x}\left[b_{1}\left(\begin{array}{c}
\frac{\nicefrac{1}{2}+\gamma}{a_{10}-a_{00}}\\
\frac{\nicefrac{1}{2}+\gamma}{a_{11}-a_{01}}
\end{array}\right)+b_{0}\left(\begin{array}{c}
\frac{\nicefrac{1}{2}+\gamma}{a_{00}-a_{10}}\\
\frac{\nicefrac{1}{2}+\gamma}{a_{01}-a_{11}}
\end{array}\right)\right]-\left[b_{1}\,\mathbf{A}\left[0,:\right]\left(\begin{array}{c}
\frac{\nicefrac{1}{2}+\gamma}{a_{10}-a_{00}}\\
\frac{\nicefrac{1}{2}-\gamma}{a_{11}-a_{01}}
\end{array}\right)+b_{0}\,\mathbf{A}\left[1,:\right]\left(\begin{array}{c}
\frac{\nicefrac{1}{2}+\gamma}{a_{00}-a_{10}}\\
\frac{\nicefrac{1}{2}-\gamma}{a_{01}-a_{11}}
\end{array}\right)\right]
\end{equation}
The solution is determined by 
\begin{equation}
f\left(\mathbf{x}\right)=\mathbf{x}\left[b_{1}\left(\begin{array}{c}
\frac{\nicefrac{1}{2}+\gamma}{a_{10}-a_{00}}\\
\frac{\nicefrac{1}{2}+\gamma}{a_{11}-a_{01}}
\end{array}\right)+b_{0}\left(\begin{array}{c}
\frac{\nicefrac{1}{2}+\gamma}{a_{00}-a_{10}}\\
\frac{\nicefrac{1}{2}+\gamma}{a_{01}-a_{11}}
\end{array}\right)\right]-\left[b_{1}\,\mathbf{A}\left[0,:\right]\left(\begin{array}{c}
\frac{\nicefrac{1}{2}+\gamma}{a_{10}-a_{00}}\\
\frac{\nicefrac{1}{2}-\gamma}{a_{11}-a_{01}}
\end{array}\right)+b_{0}\,\mathbf{A}\left[1,:\right]\left(\begin{array}{c}
\frac{\nicefrac{1}{2}+\gamma}{a_{00}-a_{10}}\\
\frac{\nicefrac{1}{2}-\gamma}{a_{01}-a_{11}}
\end{array}\right)\right]\text{ mod}f\left(\mathbf{0}_{1\times2}\right)
\end{equation}
\[
\implies b_{1}\,\mathbf{A}\left[0,:\right]\left(\begin{array}{c}
\frac{\nicefrac{1}{2}+\gamma}{a_{10}-a_{00}}\\
\frac{\nicefrac{1}{2}-\gamma}{a_{11}-a_{01}}
\end{array}\right)+b_{0}\,\mathbf{A}\left[1,:\right]\left(\begin{array}{c}
\frac{\nicefrac{1}{2}+\gamma}{a_{00}-a_{10}}\\
\frac{\nicefrac{1}{2}-\gamma}{a_{01}-a_{11}}
\end{array}\right)=0
\]
The SageMath code setup for the derivation in the above example is
as follows\\
\begin{sageverbatim}
sage: # Loading the Hypermatrix package
sage: load('./Hypermatrix_Algebra_tst.sage')
sage: 
sage: # Initialization of the variables
sage: gamma=var('gamma'); HM(2,2,'a')
[[a00, a01], [a10, a11]]
sage: var_list('b',2); var_list('x',2)
[b0, b1]
[x0, x1]
sage: # Initialization of the polynomial construction
sage: F=(\
....: b1*HM(1,2,[x0-a00,x1-a01])*HM(2,1,[(1/2+gamma)/(a10-a00), (1/2-gamma)/(a11-a01)])+\
....: b0*HM(1,2,[x0-a10,x1-a11])*HM(2,1,[(1/2+gamma)/(a00-a10), (1/2-gamma)/(a01-a11)]))[0,0]
sage: 
sage: # Solving for the value of gamma
sage: SlnF=solve(F.subs([x0==0,x1==0]),gamma)
sage: 
sage: # Initialization of the construction
sage: G=(F.subs(SlnF)).canonicalize_radical()
sage: G
-((a11*b0 - a01*b1)*x0 - (a10*b0 - a00*b1)*x1)/(a01*a10 - a00*a11)
\end{sageverbatim}\\
from which we have\begin{sagesilent}

# Loading the Hypermatrix package
load('./Hypermatrix_Algebra_tst.sage')

# Initialization of the variables
gamma=var('gamma'); HM(2,2,'a')
var_list('b',2); var_list('x',2)

F=(\
b1*HM(1,2,[x0-a00,x1-a01])*HM(2,1,[(1/2+gamma)/(a10-a00), (1/2-gamma)/(a11-a01)])+\
b0*HM(1,2,[x0-a10,x1-a11])*HM(2,1,[(1/2+gamma)/(a00-a10), (1/2-gamma)/(a01-a11)]))[0,0]

# Solving for the value of gamma
SlnF=solve(F.subs([x0==0,x1==0]),gamma)

# Initialization of the construction
G=(F.subs(SlnF)).canonicalize_radical()

\end{sagesilent}
\[
f\left(\mathbf{x}\right)=\sage{G}.
\]
Let us discuss the interpolation approach to solving systems of linear
equations when working over a finite field ( also called Galois field)
$\mathbb{F}_{p^{m}}$, for some prime number $p$. It is clear that
the interpolation construction previously discussed remains valid
when the ground field is taken to be $\mathbb{F}_{p^{m}}$. Note that
decimal encoding in base $p^{m}$ provides a canonical map from $\left(\mathbb{F}_{p^{m}}\right)^{1\times n}$
to integers $\left[0,p^{mn}\right)\cap\mathbb{Z}$ as follows 
\begin{equation}
\forall\,\mathbf{v}\in\left(\mathbb{F}_{p^{m}}\right)^{1\times n},\quad\text{lex}\left(\mathbf{v}\right)=\sum_{0\le i<n}v_{i}\,p^{m\,i}.
\end{equation}
Given this mapping, the interpolation problem 
\[
f\,:\,\left(\mathbb{F}_{p^{m}}\right)^{1\times n}\rightarrow\mathbb{F}_{p^{mn}},
\]
such that 
\[
S\,:=\left\{ \left(\mathbf{x}=\mathbf{A}\left[i,:\right],\,f\left(\mathbf{A}\left[i,:\right]\right)=b_{i}\right)\,:\,i\in\left[0,p^{mn}\right)\cap\mathbb{Z}\right\} .
\]
reduces to the univariate interpolation construction prescribed over
\[
S^{\prime}\,:=\left\{ \left(\text{lex}\left(\mathbf{x}\right)=\text{lex}\left(\mathbf{A}\left[i,:\right]\right),\,p\left(\text{lex}\left(\mathbf{A}\left[i,:\right]\right)\right)=b_{i}\right)\,:\,i\in\left[0,p^{mn}\right)\cap\mathbb{Z}\right\} ,
\]
\begin{equation}
g\left(\text{lex}\left(\mathbf{x}\right)\right)=\sum_{i\in\left[0,p^{mn}\right)\cap\mathbb{Z}}b_{i}\,\left(\prod_{j\in\left[0,p^{mn}\right)\cap\mathbb{Z}\backslash\left\{ i\right\} }\left(\text{lex}\left(\mathbf{x}\right)-\text{lex}\left(\mathbf{A}\left[j,:\right]\right)\right)\right)\left(\prod_{j\in\left[0,p^{mn}\right)\cap\mathbb{Z}\backslash\left\{ i\right\} }\left(\text{lex}\left(\mathbf{A}\left[i,:\right]\right)-\text{lex}\left(\mathbf{A}\left[j,:\right]\right)\right)\right)^{-1}.
\end{equation}
The linearity assumption determines the interpolating construction
by specifying at most $n$ interpolating points as opposed to $p^{mn}$
interpolating points. As a result the number of roots of $g\left(\text{lex}\left(\mathbf{x}\right)\right)$
in $\left[0,p^{mn}\right)\cap\mathbb{Z}$ is given by 
\[
\left(p^{m}\right)^{\text{dim}\left(\text{Null Space}\mathbf{A}\right)}.
\]
Note that have to be careful to work over the smallest field with
more then $p^{mn}$ elements. 

Consider the following variant of Lagrange's interpolation construction
for 
\[
S\,:=\left\{ \left(x=a_{i},\,f\left(a_{i}\right)=b_{i}\right)\,:\,i\in\left[0,n\right)\cap\mathbb{Z}\right\} ,
\]
a variant of the Lagrange interpolating construction is given by 
\[
f\left(x\right)=\prod_{i\in\left[0,n\right)\cap\mathbb{Z}}b_{i}^{\left(\underset{j\in\left[0,n\right)\cap\mathbb{Z}\backslash\left\{ i\right\} }{\prod}\left(x-a_{j}\right)\right)\left(\underset{j\in\left[0,n\right)\cap\mathbb{Z}\backslash\left\{ i\right\} }{\prod}\left(a_{i}-a_{j}\right)\right)^{-1}}.
\]
This new interpolation construction can be adapted to derive an interpolation
approach to solving a system of equation of second type as follows
\[
f\,:\,\mathbb{C}^{1\times n}\rightarrow\mathbb{C}
\]
such that 
\[
S\,:=\left\{ \left(\mathbf{x}=\mathbf{A}\left[i,:\right],\,f\left(\mathbf{A}\left[i,:\right]\right)=b_{i}\right)\,:\,i\in\left[0,n\right)\cap\mathbb{Z}\right\} ,
\]
where $\mathbf{A}\in$ GL$_{n}\left(\mathbb{C}\right)$ we have 
\begin{equation}
f\left(\mathbf{x}\right)=\prod_{0\le i<n}b_{i}^{\left(\underset{_{0\le j\ne i<n}}{\bigcirc}\left(\mathbf{x}-\mathbf{A}\left[j,:\right]^{\top}\right)\right)^{\top}\left[\text{diag}\left(\underset{_{0\le j\ne i<n}}{\bigcirc}\left(\mathbf{A}\left[i,:\right]-\mathbf{A}\left[j,:\right]\right)\right)^{+}\left(\frac{\mathbf{w}^{\circ^{0}}}{n}+\underset{0<k<n}{\sum}\gamma_{k}\mathbf{w}^{\circ^{k}}\right)\right]}
\end{equation}
The log-linearity of $f\left(\mathbf{x}\right)$ prescribed for all
$c\in\mathbb{C}$ and $\mathbf{u},\mathbf{v}\in\mathbb{C}^{1\times n}$
by 
\begin{equation}
\begin{array}{ccc}
f\left(c\,\mathbf{u}\right) & = & \left(f\left(\mathbf{u}\right)\right)^{c}\\
f\left(\mathbf{u}+\mathbf{v}\right) & = & f\left(\mathbf{u}\right)\cdot f\left(\mathbf{v}\right)
\end{array},
\end{equation}
comes into play in the determination of the solution expressed by
\begin{equation}
f\left(\mathbf{x}\right)\mod\left(f\left(\mathbf{0}\right)-1\right)
\end{equation}
As concrete illustration we consider the case $n=2$, where 
\begin{equation}
f\left(\mathbf{x}\right)=b_{1}^{\left(\mathbf{x}-\mathbf{A}\left[0,:\right]^{\top}\right)^{\top}\left(\begin{array}{c}
\frac{\nicefrac{1}{2}+\gamma}{a_{10}-a_{00}}\\
\frac{\nicefrac{1}{2}+\gamma}{a_{11}-a_{01}}
\end{array}\right)}b_{0}^{\left(\mathbf{x}-\mathbf{A}\left[1,:\right]^{\top}\right)^{\top}\left(\begin{array}{c}
\frac{\nicefrac{1}{2}+\gamma}{a_{00}-a_{10}}\\
\frac{\nicefrac{1}{2}+\gamma}{a_{01}-a_{11}}
\end{array}\right)}
\end{equation}
The code setup\\
\begin{sageverbatim}
sage: # Loading the Hypermatrix package
sage: load('./Hypermatrix_Algebra_tst.sage')
sage: 
sage: # Initialization of the variables
sage: gamma=var('gamma'); HM(2,2,'a')
[[a00, a01], [a10, a11]]
sage: var_list('b',2); var_list('x',2)
[b0, b1]
[x0, x1]
sage: F=(\
....: b1^(HM(1,2,[x0-a00,x1-a01])*HM(2,1,[(1/2+gamma)/(a10-a00), (1/2-gamma)/(a11-a01)]))[0,0]*\
....: b0^(HM(1,2,[x0-a10,x1-a11])*HM(2,1,[(1/2+gamma)/(a00-a10), (1/2-gamma)/(a01-a11)]))[0,0])
sage: 
sage: # Solving for the value of gamma
sage: SlnF=solve(F.subs([x0==0,x1==0])-1,gamma)
sage: 
sage: # Initialization of the construction
sage: G=(F.subs(SlnF)).canonicalize_radical()

\end{sageverbatim}\\
Running the code yields the following constraint in the parameter
$\gamma$
\[
1=\frac{b_{1}^{\frac{2\,a_{00}a_{01}-a_{01}a_{10}-a_{00}a_{11}+2\,\left(a_{01}a_{10}-a_{00}a_{11}\right)\gamma-\left(2\,\left(a_{01}-a_{11}\right)\gamma+a_{01}-a_{11}\right)x_{0}+\left(2\,\left(a_{00}-a_{10}\right)\gamma-a_{00}+a_{10}\right)x_{1}}{2\,\left(a_{00}a_{01}-a_{01}a_{10}-\left(a_{00}-a_{10}\right)a_{11}\right)}}}{b_{0}^{\frac{a_{01}a_{10}+\left(a_{00}-2\,a_{10}\right)a_{11}+2\,\left(a_{01}a_{10}-a_{00}a_{11}\right)\gamma-\left(2\,\left(a_{01}-a_{11}\right)\gamma+a_{01}-a_{11}\right)x_{0}+\left(2\,\left(a_{00}-a_{10}\right)\gamma-a_{00}+a_{10}\right)x_{1}}{2\,\left(a_{00}a_{01}-a_{01}a_{10}-\left(a_{00}-a_{10}\right)a_{11}\right)}}}
\]

We describe here a derivation of the least square solution by the
method of square completion. Let $\mathbf{A}\left(z\right)\in\left(\mathbb{C}^{\mathbb{C}}\right)^{m\times n}$
and $\mathbf{x}\left(z\right)\in\left(\mathbb{C}^{\mathbb{C}}\right)^{n\times1}$
respectively denote coefficient and variable constructs of the system,
with entries given by 
\[
\mathbf{A}\left(z\right)\left[i,j\right]=a_{ij}\,z-\frac{b_{i}}{n}\ \text{ and }\ \mathbf{x}\left(z\right)\left[j\right]=0\,z+x_{j}\,z^{0}\;\forall\,\begin{cases}
\begin{array}{c}
0\le i<m\\
0\le j<n
\end{array}\end{cases}.
\]
The completion of square argument is based on the spectral decomposition
as follows 
\[
\text{arg}\min_{\mathbf{x}}\left\{ \text{CProd}_{\sum}\left(\text{CProd}_{\sum}\left(\mathbf{A}\left(z\right),\mathbf{x}\right)^{*},\text{CProd}_{\sum}\left(\mathbf{A}\left(z\right),\mathbf{x}\right)\right)\right\} =\text{arg}\min_{\mathbf{x}}\left\{ \left(\mathbf{A}\mathbf{x}\right)^{*}\left(\mathbf{A}\mathbf{x}\right)-\left(\mathbf{A}\mathbf{x}\right)^{*}\mathbf{b}-\mathbf{b}^{*}\left(\mathbf{A}\mathbf{x}\right)+\mathbf{b}^{*}\mathbf{b}\right\} 
\]
\begin{equation}
=\text{arg}\min_{\mathbf{x}}\left\{ \mathbf{x}^{*}\mathbf{A}^{*}\mathbf{A}\mathbf{x}-\left(\mathbf{A}^{*}\mathbf{A}\mathbf{x}\right)^{*}\left(\mathbf{A}^{+}\mathbf{b}\right)-\left(\mathbf{A}^{+}\mathbf{b}\right)^{*}\left(\mathbf{A}^{*}\mathbf{A}\mathbf{x}\right)+\mathbf{b}^{*}\mathbf{b}\right\} 
\end{equation}
where $\mathbf{A}^{+}$ denotes the pseudo-inverse of $\mathbf{A}$.
For notational convenience, let $\mathbf{v}=\sqrt{\text{diag}\left(\boldsymbol{\lambda}\right)}\mathbf{Q}\mathbf{A}^{+}\mathbf{b}$,
and let the spectral decomposition of $\mathbf{A}^{*}\mathbf{A}$
be expressed 
\begin{equation}
\left(\mathbf{A}^{*}\mathbf{A}\right)^{k}=\left(\sqrt{\text{diag}\left(\boldsymbol{\lambda}\right)}^{k}\mathbf{Q}\right)^{*}\left(\sqrt{\text{diag}\left(\boldsymbol{\lambda}\right)}^{k}\mathbf{Q}\right)\;\forall\;0\le k\le n
\end{equation}
then
\begin{equation}
\text{arg}\min_{\mathbf{x}}\left\{ \left(\sqrt{\text{diag}\left(\boldsymbol{\lambda}\right)}\mathbf{Q}\mathbf{x}\right)^{*}\left(\sqrt{\text{diag}\left(\boldsymbol{\lambda}\right)}\mathbf{Q}\mathbf{x}\right)-\left(\sqrt{\text{diag}\left(\boldsymbol{\lambda}\right)}\mathbf{Q}\mathbf{x}\right)^{*}\mathbf{v}-\mathbf{v}^{*}\left(\sqrt{\text{diag}\left(\boldsymbol{\lambda}\right)}\mathbf{Q}\mathbf{x}\right)+\mathbf{v}^{*}\mathbf{v}+\left(\mathbf{b}^{*}\mathbf{b}-\mathbf{v}^{*}\mathbf{v}\right)\right\} .
\end{equation}
\begin{equation}
\implies\mathbf{Q}^{*}\left(\sqrt{\text{diag}\left(\boldsymbol{\lambda}\right)}\right)^{+}\mathbf{v}=\text{arg}\min_{\mathbf{x}}\left\{ \left(\sqrt{\text{diag}\left(\boldsymbol{\lambda}\right)}\mathbf{Q}\mathbf{x}-\mathbf{v}\right)^{*}\left(\sqrt{\text{diag}\left(\boldsymbol{\lambda}\right)}\mathbf{Q}\mathbf{x}-\mathbf{v}\right)\right\} 
\end{equation}
\begin{equation}
\implies\mathbf{Q}^{*}\left(\sqrt{\text{diag}\left(\boldsymbol{\lambda}\right)}\right)^{+}\sqrt{\text{diag}\left(\boldsymbol{\lambda}\right)}\mathbf{Q}\mathbf{A}^{+}\mathbf{b}=\text{arg}\min_{\mathbf{x}}\left\{ \left(\sqrt{\text{diag}\left(\boldsymbol{\lambda}\right)}\mathbf{Q}\mathbf{x}-\mathbf{v}\right)^{*}\left(\sqrt{\text{diag}\left(\boldsymbol{\lambda}\right)}\mathbf{Q}\mathbf{x}-\mathbf{v}\right)\right\} 
\end{equation}
which expresses the least square solution.

We now discuss the least square solution associated with constraints
of the form
\begin{equation}
\mathbf{b}=\text{CProd}_{\sum}\left(z^{\circ\mathbf{A}},\mathbf{x}\right)\Longleftrightarrow\mathbf{1}_{m\times1}=\text{CProd}_{\prod}\left(\mathbf{A}\left(z\right),\mathbf{x}\right)
\end{equation}
where 
\begin{equation}
\mathbf{A}\left(z\right)\left[i,j\right]=\frac{z^{a_{ij}}}{\sqrt[n]{b_{i}}},\ \forall\:\begin{cases}
\begin{array}{c}
0\le i<m\\
0\le j<n
\end{array}\end{cases}.
\end{equation}
The least square solution is obtained by solving for 
\begin{equation}
\text{arg}\min_{\mathbf{x}}\left\{ \text{CProd}_{\sum}\left(\ln_{\circ}\left(\text{CProd}_{\prod}\left(\mathbf{A}\left(z\right),\mathbf{x}\right)\right)^{*},\,\ln_{\circ}\left(\text{CProd}_{\prod}\left(\mathbf{A}\left(z\right),\mathbf{x}\right)\right)\right)\right\} 
\end{equation}
The third type of system of equation is also results from setting
the \emph{combinator} to $\prod$ and expressed by constraints of
the form 
\begin{equation}
\mathbf{1}_{m\times1}=\mbox{CProd}_{\prod}\left(\mathbf{A}\left(z\right),\mathbf{x}\left(z\right)\right),
\end{equation}
where $\mathbf{A}\left(z\right)\in\left(\mathbb{C}^{\mathbb{C}}\right)^{m\times n}$
is called the base construct and $\mathbf{x}\left(z\right)$ of size
$n\times1$ denotes the variable construct having entries given by
\begin{equation}
\mathbf{A}\left(z\right)\left[i,j\right]=\frac{a_{ij}^{z}}{\sqrt[n]{b_{i}}}\ \text{ and }\ \mathbf{x}\left(z\right)\left[j\right]=0\,z+x_{j}z^{0},\quad\forall\,\begin{cases}
\begin{array}{c}
0\le i<m\\
0\le j<n
\end{array}\end{cases}.
\end{equation}
For example, the constructs
\begin{equation}
\mathbf{A}\left(z\right)=\left(\begin{array}{cc}
\frac{a_{00}^{z}}{\sqrt{b_{0}}} & \frac{a_{01}^{z}}{\sqrt{b_{0}}}\\
\frac{a_{10}^{z}}{\sqrt{b_{1}}} & \frac{a_{11}^{z}}{\sqrt{b_{1}}}
\end{array}\right)\ \text{ and }\ \mathbf{x}\left(z\right)=\left(\begin{array}{c}
0z+x_{0}z^{0}\\
0z+x_{1}z^{0}
\end{array}\right),
\end{equation}
express a system of two equations in the unknowns $x_{0}$, $x_{1}$
given by 
\begin{equation}
\mathbf{1}_{2\times1}=\left(\begin{array}{r}
\frac{a_{00}^{x_{0}}\cdot a_{01}^{x_{1}}}{b_{0}}\\
\frac{a_{10}^{x_{0}}\cdot a_{11}^{x_{1}}}{b_{1}}
\end{array}\right).
\end{equation}
The code setup above initialize the constructs\begin{sagesilent}
# Loading the Package
load('./Hypermatrix_Algebra_tst.sage')

# Initialization of the morphism variable
z=var('z')

# Initialization of the order and size parameters
od=2; sz=2

# Initialization of the constructs
A=HM(sz, sz, 'a').elementwise_exponent(z)
B=HM(sz, sz, 'b')

# Computing the product
C1=GProd([A,B], prod, [z])

# The right identity construct
rId=z*HM(od, sz, 'kronecker')
C2=GProd([A,rId], prod, [z])

# The left identity construct
lId=HM(od, sz, 'kronecker').elementwise_base_exponent(z)
C3=GProd([lId,A], prod, [z])
\end{sagesilent}
\begin{equation}
\mathbf{A}\left(z\right)=\sage{A.matrix()},\;\mathbf{B}\left(z\right)=\sage{B.matrix()},
\end{equation}
\begin{equation}
\mathbf{rId}\left(z\right)=\sage{rId.matrix()},\;\mathbf{lId}\left(z\right)=\sage{lId.matrix()}
\end{equation}
and computes 
\begin{equation}
\begin{array}{ccc}
\text{GProd}_{\prod}\left(\mathbf{A},\mathbf{B}\right) & = & \sage{C1.matrix()},\\
\\
\text{GProd}_{\prod}\left(\mathbf{A},\mathbf{rId}\right) & = & \sage{C2.matrix()},\\
\\
\text{GProd}_{\prod}\left(\mathbf{lId},\mathbf{A}\right) & = & \sage{C3.matrix()}.
\end{array}
\end{equation}
The code setup illustrates the fact that left identity elements differs
from right identity elements for CProd$_{\prod}$. The least square
constraints associated with systems of type $3$ are associated with
the 
\begin{equation}
\mathbf{b}=\text{GProd}_{\prod}\left(\mathbf{A}^{\circ z},\mathbf{x}\right)\Longleftrightarrow\mathbf{1}_{m\times1}=\text{GProd}_{\prod}\left(\mathbf{A}\left(z\right),\mathbf{x}\right)
\end{equation}
where 
\begin{equation}
\mathbf{A}\left(z\right)\left[i,j\right]=\frac{a_{ij}^{z}}{\sqrt[n]{b_{i}}},\ \forall\:\begin{cases}
\begin{array}{c}
0\le i<m\\
0\le j<n
\end{array}\end{cases}
\end{equation}
The log-least square solution is obtained by solving the minimization
problem 
\begin{equation}
\arg\min_{\mathbf{x}}\left\{ \text{GProd}_{\sum}\left(\ln_{\circ}\left(\text{GProd}_{\prod}\left(\mathbf{A}\left(z\right),\mathbf{x}\right)\right)^{*},\,\ln_{\circ}\left(\text{GProd}_{\prod}\left(\mathbf{A}\left(z\right),\mathbf{x}\right)\right)\right)\right\} 
\end{equation}

\section{A spectral theory for constructs.}

The theory of construct broaden the scope of matrix/hypermatrix spectra.
We make this point by describing a concrete illustration of the spectral
decomposition of a $2\times2$ construct. Let the composer $\mathcal{F}\,:\,\mathbb{C}^{\mathbb{C}}\times\mathbb{C}^{\mathbb{C}}\rightarrow\mathbb{C}^{\mathbb{C}}$
and combinator be respectively set to 
\[
\mathcal{F}\left(f\left(z\right),g\left(z\right)\right):=f\left(g\left(z\right)\right)\quad\text{ and }\quad\begin{array}{c}
\\
\text{Op}\\
^{0\le\textcolor{red}{j}<k}
\end{array}\,:=\sum_{0\le\textcolor{red}{j}<k}
\]
Consider the $2\times2$ construct
\[
\mathbf{U}\left(z\right)=\left(\begin{array}{cc}
\frac{e^{z}}{2} & \frac{\ln z}{2}\\
\frac{-e^{z}}{2} & \frac{\ln z}{2}
\end{array}\right)\text{ and }\mathbf{V}\left(z\right)=\left(\begin{array}{cc}
\ln z & \ln\left(-z\right)\\
e^{z} & e^{z}
\end{array}\right)
\]
which from a pseudo-inverse pair in the sense that they satisfy the
equality
\[
\text{GProd}_{\sum,\mathcal{F}}\left\{ \left(\begin{array}{cc}
\frac{e^{z}}{2} & \frac{\ln z}{2}\\
\frac{-e^{z}}{2} & \frac{\ln z}{2}
\end{array}\right),\left(\begin{array}{cc}
\ln z & \ln\left(-z\right)\\
e^{z} & e^{z}
\end{array}\right)\right\} =\left(\begin{array}{cc}
z & 0\\
0 & z
\end{array}\right).
\]
The construct spectral decomposition is expressed analogously to the
matrix spectral decomposition as product of the form 
\[
\text{GProd}_{\sum,\mathcal{F}}\left\{ \left(\begin{array}{cc}
\frac{e^{z}}{2} & \frac{\ln z}{2}\\
\frac{-e^{z}}{2} & \frac{\ln z}{2}
\end{array}\right),\text{GProd}_{\sum,\mathcal{F}}\left\{ \left(\begin{array}{cc}
\lambda_{0}\left(z\right) & 0\\
0 & \lambda_{1}\left(z\right)
\end{array}\right),\left(\begin{array}{cc}
\ln z & \ln\left(-z\right)\\
e^{z} & e^{z}
\end{array}\right)\right\} \right\} =
\]
\begin{equation}
\text{GProd}_{\sum,\mathcal{F}}\left\{ \left(\begin{array}{cc}
\frac{e^{z}}{2} & \frac{\ln z}{2}\\
\frac{-e^{z}}{2} & \frac{\ln z}{2}
\end{array}\right),\left(\begin{array}{cc}
\lambda_{0}\left(\ln z\right) & \lambda_{0}\left(\ln\left(-z\right)\right)\\
\lambda_{1}\left(e^{z}\right) & \lambda_{1}\left(e^{z}\right)
\end{array}\right)\right\} \label{Spectral decompotion 1}
\end{equation}
or alternatively 
\[
\text{GProd}_{\sum,\mathcal{F}}\left\{ \text{GProd}_{\sum,\mathcal{F}}\left\{ \left(\begin{array}{cc}
\frac{e^{z}}{2} & \frac{\ln z}{2}\\
\frac{-e^{z}}{2} & \frac{\ln z}{2}
\end{array}\right),\left(\begin{array}{cc}
\lambda_{0}\left(z\right) & 0\\
0 & \lambda_{1}\left(z\right)
\end{array}\right)\right\} ,\left(\begin{array}{cc}
\ln z & \ln\left(-z\right)\\
e^{z} & e^{z}
\end{array}\right)\right\} =
\]
\begin{equation}
\text{GProd}_{\sum,\mathcal{F}}\left\{ \left(\begin{array}{cc}
\frac{e^{\lambda_{0}\left(z\right)}}{2} & \frac{\ln\left(\lambda_{1}\left(z\right)\right)}{2}\\
\frac{-e^{\lambda_{0}\left(z\right)}}{2} & \frac{\ln\left(\lambda_{1}\left(z\right)\right)}{2}
\end{array}\right),\left(\begin{array}{cc}
\ln z & \ln\left(-z\right)\\
e^{z} & e^{z}
\end{array}\right)\right\} .\label{Spectral decomposition 2}
\end{equation}
Both Eq. ( \ref{Spectral decompotion 1} ) and Eq. ( \ref{Spectral decomposition 2}
) express the spectral decomposition of the $2\times2$ construct
\begin{equation}
2^{-1}\left(\begin{array}{cc}
e^{\lambda_{0}\left(\ln z\right)}+\ln\left(\lambda_{1}\left(e^{z}\right)\right) & e^{\lambda_{0}\left(\ln\left(-z\right)\right)}+\ln\left(\lambda_{1}\left(e^{z}\right)\right)\\
-e^{\lambda_{0}\left(\ln z\right)}+\ln\left(\lambda_{1}\left(e^{z}\right)\right) & -e^{\lambda_{0}\left(\ln\left(-z\right)\right)}+\ln\left(\lambda_{1}\left(e^{z}\right)\right)
\end{array}\right)\text{ for }\lambda_{1}\left(z\right),\lambda_{2}\left(z\right)\subset\mathbb{C}^{\mathbb{C}}.
\end{equation}
At $z=1$, the spectral decomposition of a given construct $\mathbf{A}\left(z\right)\in\left(\mathbb{C}^{\mathbb{C}}\right)^{n\times n}$
expresses the spectral decomposition of $\mathbf{A}\left(1\right)$
( assuming that $\mathbf{A}\left(1\right)$ is diagonalizable ) if
every entries of $\mathbf{A}\left(z\right)$ is a polynomial of degree
at most one in the morphism variable $z$ and $\mathbf{A}\left(0\right)=\mathbf{0}_{n\times n}$.
The spectra of constructs therefore generalizes the spectra of matrices
and in so doing reveals some subtle details of the matrix spectra.
For instance the construct spectra brings to light the importance
of commutativity in the formulation of eigenvalue-eigenvector constraints.
In particular, the construct formulation eigenvalue-eigenvector constraints
reveals a natural duality relating the defined composer $\mathcal{F}$
and its dual $\mathcal{G}\,:\,\mathbb{C}^{\mathbb{C}}\times\mathbb{C}^{\mathbb{C}}\rightarrow\mathbb{C}^{\mathbb{C}}$
such that for any $f,\,g\,\in\,\mathbb{C}^{\mathbb{C}}$ such that
\[
\mathcal{G}\left(f\left(z\right),g\left(z\right)\right)=g\left(f\left(z\right)\right).
\]
Note that
\[
\mathcal{F}\left(f\left(z\right),g\left(z\right)\right)-\mathcal{G}\left(f\left(z\right),g\left(z\right)\right)\quad\text{ and }\quad\mathcal{F}\left(f\left(z\right),g\left(z\right)\right)\left(\mathcal{G}\left(f\left(z\right),g\left(z\right)\right)\right)^{-1}
\]
both express commutation relations which are central to the study
of Lie algebras. In the context of the $2\times2$ illustration, the
eigenvalues-eigenvector equation are 
\[
\text{GProd}_{\sum,\mathcal{F}}\left\{ 2^{-1}\left(\begin{array}{cc}
e^{\lambda_{0}\left(\ln z\right)}+\ln\left(\lambda_{1}\left(e^{z}\right)\right) & e^{\lambda_{0}\left(\ln\left(-z\right)\right)}+\ln\left(\lambda_{1}\left(e^{z}\right)\right)\\
-e^{\lambda_{0}\left(\ln z\right)}+\ln\left(\lambda_{1}\left(e^{z}\right)\right) & -e^{\lambda_{0}\left(\ln\left(-z\right)\right)}+\ln\left(\lambda_{1}\left(e^{z}\right)\right)
\end{array}\right),\,\left(\begin{array}{c}
\frac{e^{z}}{2}\\
\frac{-e^{z}}{2}
\end{array}\right)\right\} =
\]
\begin{equation}
\text{GProd}_{\sum,\mathcal{G}}\left\{ \left(\begin{array}{cc}
\lambda_{0}\left(z\right) & 0\\
0 & \lambda_{0}\left(z\right)
\end{array}\right),\left(\begin{array}{c}
\frac{e^{z}}{2}\\
\frac{-e^{z}}{2}
\end{array}\right)\right\} 
\end{equation}
and 
\[
\text{GProd}_{\sum,\mathcal{F}}\left\{ 2^{-1}\left(\begin{array}{cc}
e^{\lambda_{0}\left(\ln z\right)}+\ln\left(\lambda_{1}\left(e^{z}\right)\right) & e^{\lambda_{0}\left(\ln\left(-z\right)\right)}+\ln\left(\lambda_{1}\left(e^{z}\right)\right)\\
-e^{\lambda_{0}\left(\ln z\right)}+\ln\left(\lambda_{1}\left(e^{z}\right)\right) & -e^{\lambda_{0}\left(\ln\left(-z\right)\right)}+\ln\left(\lambda_{1}\left(e^{z}\right)\right)
\end{array}\right),\,\left(\begin{array}{c}
\frac{\ln z}{2}\\
\frac{\ln z}{2}
\end{array}\right)\right\} =
\]
\begin{equation}
\text{GProd}_{\sum,\mathcal{G}}\left\{ \left(\begin{array}{cc}
\lambda_{1}\left(z\right) & 0\\
0 & \lambda_{1}\left(z\right)
\end{array}\right),\left(\begin{array}{c}
\frac{\ln z}{2}\\
\frac{\ln z}{2}
\end{array}\right)\right\} .
\end{equation}
More generally the spectral decomposition of a construct $\mathbf{A}\left(z\right)\in\left(\mathbb{C}^{\mathbb{C}}\right)^{n\times n}$
is defined in terms of a pair of constructs $\mathbf{U}\left(z\right)\in\left(\mathbb{C}^{\mathbb{C}}\right)^{n\times n}$
and $\mathbf{V}\left(z\right)\in\left(\mathbb{C}^{\mathbb{C}}\right)^{n\times n}$
subject to
\begin{equation}
\text{GProd}_{\sum,\mathcal{F}}\left(\mathbf{U}\left(z\right),\mathbf{V}\left(z\right)\right)=z\,\mathbf{I}_{n}
\end{equation}
and the equality
\[
\mathbf{A}\left(z\right)=\text{GProd}_{\sum,\mathcal{F}}\left(\mathbf{U}\left(z\right),\text{GProd}_{\sum,\mathcal{F}}\left(\text{diag}\left(\begin{array}{c}
\lambda_{0}\left(z\right)\\
\vdots\\
\lambda_{n-1}\left(z\right)
\end{array}\right),\mathbf{V}\left(z\right)\right)\right)=
\]
\begin{equation}
\text{GProd}_{\sum,\mathcal{F}}\left(\text{GProd}_{\sum,\mathcal{F}}\left(\mathbf{U}\left(z\right),\text{diag}\left(\begin{array}{c}
\lambda_{0}\left(z\right)\\
\vdots\\
\lambda_{n-1}\left(z\right)
\end{array}\right)\right),\mathbf{V}\left(z\right)\right).
\end{equation}
for $\left\{ \lambda_{i}\left(z\right)\right\} _{0\le i<n}\subset\mathbb{C}^{\mathbb{C}}$.
Recall that the eigenvector-eigenvalue for an $n\times n$ matrix
$\mathbf{A}$ originate from the matrix equation 
\[
\mathbf{A}\cdot\mathbf{V}=\mathbf{V}\cdot\text{diag}\left(\begin{array}{c}
\lambda_{0}\\
\vdots\\
\lambda_{n-1}
\end{array}\right).
\]
The construct analog of the equation above is 
\begin{equation}
\text{GProd}_{\sum,\mathcal{F}}\left(\mathbf{A}\left(z\right),\,\mathbf{V}\left(z\right)\right)=\text{GProd}_{\sum,\mathcal{F}}\left(\mathbf{V}\left(z\right),\,\text{diag}\left(\begin{array}{c}
\lambda_{0}\left(z\right)\\
\vdots\\
\lambda_{n-1}\left(z\right)
\end{array}\right)\right).
\end{equation}
We express column-wise the eigenvalue-eigenvector equation by using
the primal and dual composer $\mathcal{F}$ and $\mathcal{G}$ as
follows 
\begin{equation}
\text{GProd}_{\sum,\mathcal{F}}\left(\mathbf{A}\left(z\right),\mathbf{V}\left(z\right)\left[:,i\right]\right)=\text{GProd}_{\sum,\mathcal{G}}\left(\text{diag}\left(\begin{array}{c}
\lambda_{i}\left(z\right)\\
\vdots\\
\lambda_{i}\left(z\right)
\end{array}\right),\mathbf{V}\left(z\right)\left[:,i\right]\right).
\end{equation}
Similarly to the matrix case not all constructs are diagonalizable
consequently determining the spectra of second order construct when
the combinator is set to 
\[
\begin{array}{c}
\\
\text{Op}\\
^{0\le\textcolor{red}{j}<k}
\end{array}\,:=\sum_{0\le\textcolor{red}{j}<k}
\]
amounts to solving constraints of the form 
\begin{equation}
\begin{array}{c}
\text{GProd}_{\sum,\mathcal{F}}\left(\mathbf{A}\left(z\right),\mathbf{v}\left(z\right)\right)=\text{GProd}_{\sum,\mathcal{G}}\left(\lambda\left(z\right)\mathbf{I}_{n},\mathbf{v}\left(z\right)\right)\\
\text{or}\\
\text{GProd}_{\sum,\mathcal{F}}\left(\mathbf{A}\left(z\right),\mathbf{v}\left(z\right)\right)=\text{GProd}_{\sum,\mathcal{F}}\left(\lambda\left(z\right)\mathbf{I}_{n},\mathbf{v}\left(z\right)\right)
\end{array}.
\end{equation}
Similarly, determining the spectra of second order construct when
the combinator is set to 
\[
\begin{array}{c}
\\
\text{Op}\\
^{0\le\textcolor{red}{j}<k}
\end{array}\,:=\prod_{0\le\textcolor{red}{j}<k}
\]
amounts to solving constraints of the form 
\begin{equation}
\begin{array}{c}
\text{GProd}_{\prod,\mathcal{F}}\left(\mathbf{A}\left(z\right),\mathbf{v}\left(z\right)\right)=\text{GProd}_{\prod,\mathcal{G}}\left(\left(\lambda\left(z\right)\right)^{\circ^{\mathbf{I}_{n}}},\mathbf{v}\left(z\right)\right)\\
\text{or}\\
\text{GProd}_{\prod,\mathcal{F}}\left(\mathbf{A}\left(z\right),\mathbf{v}\left(z\right)\right)=\text{GProd}_{\prod,\mathcal{F}}\left(\left(\lambda\left(z\right)\right)^{\circ^{\mathbf{I}_{n}}},\mathbf{v}\left(z\right)\right)
\end{array}.
\end{equation}
Similar construct eigenvalue-eigenvector constraints arise from tropical
linear algebra\cite{73605}.

\bibliographystyle{amsalpha}
\bibliography{mybib}

\bigskip{}

\parbox[t]{0.55\columnwidth}{%
\noun{Department of Applied Mathematics and Statistics}

\noun{Johns Hopkins University}

\noun{Baltimore, MD 21218}

E-mail: \texttt{egnang1@jhu.edu}%
}%
\parbox[t]{0.44\columnwidth}{%
\noun{Department of Epidemiology}

\noun{Johns Hopkins Bloomberg School of Public Health}

\noun{Baltimore, MD 21205}

E-mail: \texttt{jgnang1@jhu.edu}%
}
\end{document}